\newcommand{\Title}{Title}
\numberwithin{equation}{section}
\theoremstyle{definition}\newtheorem{definition}{Definition}[section]
\newtheorem{defititle}[definition]{\Title}
\newtheorem{remark}[definition]{Remark}
\newtheorem{remarks}[definition]{Remarks}
\newtheorem{example}[definition]{Example}
\newtheorem{proposition}[definition]{Proposition}
\newtheorem{proposition-definition}[definition]{Proposition-Definition}
\newtheorem{lemma}[definition]{Lemma}
\newtheorem{theorem}[definition]{Theorem}
\newtheorem{corollary}[definition]{Corollary}
\newcommand{\cG}{\mathcal{G}}
\newcommand{\R}{\mathbb{R}}
\newcommand{\N}{\mathbb{N}}
\newcommand{\cF}{\mathcal{F}}
\newcommand{\cL}{\mathcal{L}}
\newcommand{\cV}{\mathcal{V}}
\newcommand{\C}{\mathbb{C}}
\newcommand{\cA}{\mathcal{A}}
\newcommand{\cI}{\mathcal{I}}
\newcommand{\Z}{\mathbb{Z}}
\newcommand{\cP}{\mathcal{P}}
\newcommand{\cU}{\mathcal{U}}
\newcommand{\cH}{\mathcal{H}}
\newcommand{\cM}{\mathcal{M}}
\newcommand{\resp}{{\it resp.}\/ }
\newcommand{\id}{{\hbox{id}}}
\newcommand{\ie}{{\it i.e.}\/ }
\newcommand{\eg}{{\it e.g.}\/ }
\newcommand{\cf}{{\it cf.}\/ }
\newcommand{\dom}{{\rm dom}}
\newcommand{\gog}{\mathfrak{g}}
\newcommand{\Pseudodif}{\Psi^\infty_c}
\newcommand{\Pseudo}[1]{\Psi^{{#1}}_c}
\def\gpd{\,\lower1pt\hbox{$\longrightarrow$}\hskip-.24in\raise2pt
             \hbox{$\longrightarrow$}\,}
\begin{document}

\renewcommand\theenumi{\alph{enumi}}
\renewcommand\labelenumi{\rm {\theenumi})}

\begin{center}
{\Large\bf Pseudodifferential calculus on a singular foliation
\footnote{AMS subject classification: Primary 47G30, 57R30.
Secondary 46L87.} 
\footnote{This research was supported in part by Funda\c{c}\~{a}o
para a Ci\^{e}ncia e a Tecnologia (FCT) through the Centro de Matem\'{a}tica
da Universidade do Porto $\langle \text{www.fc.up.pt/cmup} \rangle$}

\bigskip

{\sc by Iakovos Androulidakis \footnote{IA devotes this work to Odysseas' 0th birthday.} and Georges Skandalis}
}
\end{center}

{\footnotesize
 Centro de Matem\'{a}tica da Universidade do Porto
\vskip-4pt Edifício dos Departamentos de Matem\'{a}tica da FCUP 
\vskip-4pt Rua do Campo Alegre 687, 4169--007 Porto
\vskip-4pt e-mail: iandroulidakis@fc.up.pt

\vskip 2pt Institut de Math{\'e}matiques de Jussieu, UMR 7586
\vskip -4ptCNRS - Universit\'e Diderot - Paris 7
\vskip-4pt 175, rue du Chevaleret, F--75013 Paris
\vskip-4pt e-mail: skandal@math.jussieu.fr
}
\bigskip
\everymath={\displaystyle}

\begin{abstract}\noindent
In a previous paper (\cite{AndrSk}), we associated a holonomy
groupoid and a $C^*$-algebra to any singular foliation $(M,\cF)$. Using these,
we construct the associated pseudodifferential calculus. This calculus gives meaning to a Laplace operator of any singular foliation $\cF$ on a compact manifold $M$, and we show that it can be naturally understood as a positive, unbounded, self-adjoint operator on $L^{2}(M)$.
\end{abstract}

\section*{Introduction}

This paper is a continuation of our previous paper \cite{AndrSk}. There we defined the holonomy groupoid of any singular foliation $\cF$ on a smooth manifold $M$. Although this groupoid is a rather ill behaved object, we could define 
\begin{itemize}
\item the convolution algebra $\cA(M,\cF)$ of ``smooth compactly supported'' functions on this groupoid;
\item the full and reduced $C^*$-algebra of the foliation which are suitable (Hausdorff) completions of this convolution algebra.
\end{itemize}

A key notion in \cite{AndrSk} is that of a bi-submersion which will be also of importance here. This is loosely speaking a cover of an open subset of the holonomy groupoid. It is given by a manifold $U$ with two submersions $s,t : U \to M$, each of which lifts the leaves of $\cF$ to the fibers of $s$ and $t$. 

Here, we proceed and construct the longitudinal pseudodifferential calculus for our foliations.

\medskip The longitudinal differential operators are very easily defined: They are
generated by vector fields along the foliation.

\medskip The longitudinal pseudodifferential operators are obtained as images of distributions on bi-submersions with ``pseudodifferential singularities'' along a bisection:

  Let $(U,t,s)$ be a bi-submersion and $V\subset U$ an identity
bisection. Denote by $N$ the normal bundle to $V$ in $U$ and let
$a$ be a (classical) symbol on $N^*$. Let $\chi$ be a smooth
function on $U$ supported on a tubular neighborhood of $V$ in $U$
and let $\phi:U\to N$ be an inverse of the exponential map (defined
on the neighborhood of $V$). A pseudodifferential kernel on $U$ is
a (generalized) function $k_a:u\mapsto \int a(p(u),\xi)
\exp(i\phi(u)\xi) \chi (u)\,d\xi$ (here $p:U\to M$ is the composition $U\stackrel{\phi}{\longrightarrow} N\stackrel{q}{\longrightarrow} V$  where $q$ is the vector bundle projection $(x,\xi)\mapsto x$ - the integral is an oscillatory
integral, taken over the vector space $N^*_{p(u)}$).

\medskip The principal symbol of such an operator is a homogeneous function on a locally compact space $\cF^*$ which is a family of vector spaces (of non constant dimension).

\bigskip As in the case of foliations and Lie groupoids (\cf \cite{Connes0,
 Monthubert-Pierrot, Nistor-Weinstein-Xu, Vassout}), we show:

\begin{itemize} \item The kernel $k_a$ defines a multiplier of $\cA(M,\cF)$ (more precisely, of the image of $\cA(M,\cF)$ in the $C^*$-algebra of the foliation).

\item Those multipliers  form an algebra. 

\item The algebra of pseudodifferential operators is filtered by
the order of $a$. The class of $k_a$ only depends up to lower
order on the germ of the principal part of $a$ on  $\cF^*$.

\item Negative order pseudodifferential operators are elements of
the $C^*$-algebra (full and therefore reduced) of the foliation.

\item  Zero order pseudodifferential operators define bounded multipliers of
the $C^*$-algebra of the foliation. 

\item We therefore have an exact sequence of $C^*$-algebras $$0\to C^*(M,\cF)\to
\Psi^*(M,\cF)\to B \to 0$$ where $\Psi^*(M,\cF)$ denotes
the closure of the algebra of zero order pseudodifferential operators and $B$ is (a quotient of) the algebra $C_0(S^*\cF)$ of continuous functions on the ``cosphere bundle'' which vanish at infinity.


\item Longitudinally elliptic operators of positive order (\ie operators whose principal symbol is invertible when restricted to $\cF^*$) give rise to regular quasi-invertible operators.

\item We may form a Laplacian of $\cF$, which is an example of such a positive order longitudinally elliptic  operator. It defines a regular positive self-adjoint multiplier of the $C^*$-algebra, and therefore a positive self-adjoint operator in any non-degenerate representation of $C^{*}(\cF)$; in particular a self-adjoint element of $B(L^{2}(M))$.
\end{itemize}

\bigskip One can also take coefficients on a smooth vector bundle over $M$. This allows to build an index theory, which we intend to treat in a subsequent paper.

\bigskip The paper is organized as follows:
\begin{itemize}
\item In section 1 we recall basic facts about pseudodifferential calculus: we define distributions on a manifold $U$ with singularities on a submanifold $V$ and state the main classical results that will be used in the subsequent sections, namely: 
\begin{enumerate}
\item Such distributions have a principal symbol which is a smooth function in the co-shpere bundle of $N^*$, where $N$ is the normal bundle of $V$ in $U$.
\item We discuss pull backs and push forwards (partial integrations) of pseudodifferential distributions.
\item If $V_{1}, V_{2}$ are transversal to each other then the product of $P_{1} \in \cP(U,V_{1})$ and $P_{2} \in \cP(U,V_{2})$ is a well defined distribution; a partial integral gives rise to an element of $\cP(W,V_{1} \cap V_{2})$ whose principal symbol is the product of the principal symbols.
\item The algebra $C^{\infty}_{c}(U)$ is dense in $\cP(U,V)$.
\end{enumerate}

\item In section 2, for the convenience of the reader, we briefly recall the framework we introduced in \cite{AndrSk} and give some slight modifications of results there. We moreover define the ``cotangent space'' $\cF^{*}$ together with its natural locally compact topology. 

\item In section 3 we define the longitudinal pseudodifferential operators. Namely, we define an algebra $\Psi^{\infty}(\cU,\cV)$ of pseudodifferential operators associated with an atlas of bi-submersions $\cU$ and a family of identity bisections $\cV$ covering $M$. 

For this algebra to be defined reasonably, one needs to bear in mind the following: In case the foliation is regular (or defined by a Lie groupoid), the longitudinal pseudodifferential operators form a a subalgebra of the multipliers of the groupoid convolution algebra. A general singular foliation may not arise from a Lie groupoid, but it always comes from an atlas of bi-submersions. So, in order for $\Psi^{\infty}(\cU,\cV)$ to generalize properly the pseudodifferential calculus of the regular case, we define its elements a priori as multipliers of the image of the convolution algebra $\cA_{\cU}$ in its Hausdorff completion $C^{*}(\cU)$.

This is achieved by showing in \S 3.3 that every pseudodifferential kernel $P \in \cP(U,V)$ defines a multiplier $\tilde{\theta}_{U,V}(P)$ of the image of the natural morphism $\theta : \cA_{\cU} \to C^{*}(\cU)$ from $\cA_{\cU}$ to its Hausdorff completion. To this end, we need to show in \S 3.2 that a non-degenerate representation $\Pi$ of $C^{*}(\cU)$ on a Hilbert space $\cH$ admits an appropriate extension to compactly supported pseudodifferential kernels.

In \S 3.5 we show that our pseudodifferential operators have a principal symbol which is a homogeneous function on the subset of non-zero elements in $\cF^{*}$. Last, in \S 3.6, we show that thus defined, pseudodifferential operators form a $*$-algebra.

\item In section 4 we show that our longitudinal pseudodifferential calculus has the classical ellipticity properties. Namely the existence of parametrices for elliptic operators and the existence of square roots for even order, self-adjoint operators with positive principal symbol.

\item In section 5 we establish the extension $$0\to C^*(M,\cF)\to
\Psi^*(M,\cF)\to B \to 0$$ discussed above.

\item In section 6 we show how our pseudodifferential calculus allows for the Laplacian of a singular foliation to be realized as a self-adjoint element of $B(L^{2}(M))$.

\end{itemize}

\tableofcontents

\section{Generalized functions with pseudodifferential singularities}

In this section we recall some well known facts on pseudodifferential distributions and operators.

\subsection{Symbols}

The symbols that we consider are the ``classical'' or ``polyhomogeneous'' symbols. Let us briefly recall how they are defined:

\begin{itemize}
\item Let $k,n\in \N$, $V$ be an open subset of $\R^n$. For $m\in \Z$, define the space $S^{m}(V \times \R^k)$ of symbols of order (less than or equal to) $m$ to be the set of smooth functions $a : V \times \R^k\to \C$ such that for any compact set $K \subset V$ and any multi-indices $\alpha\in \N^n$ and $\beta\in \N^k$ there is a constant $C_{K,\alpha, \beta}\in \R_+$ such that, for all $x\in K$ and $\xi\in \R^n$ we have  $$|\partial^{\alpha}_{x}\partial^{\beta}_{\xi}a(x,\xi)| \leq C_{K,\alpha, \beta}(1 + |\xi|)^{m - |\beta|}.$$ 

\item A symbol $a \in S^{m}(V \times \R^k)$ is called {\em classical} or \emph{polyhomogeneous} if $a \sim \sum_{k = -\infty}^{m}a_{k}$, where $a_k$ are {\em positively homogeneous} functions of degree $k$ in the second variable, namely they satisfy $a_{k}(x,t\xi) = t^{k}a(x,\xi)$ for all $\xi \neq 0$ and $t > 0$. The notation ``$\sim$" means that $a(x,\xi) - \chi(\xi)\sum_{k = m-M+1}^{m}a_{k}(x,\xi) \in S^{m-M}(V \times \R^k)$ for all $M \in \N$. Here $\chi$ is a cut-off function with $\chi(\xi) = 0$ if $|\xi| < 1/2$ and $\chi(\xi) = 1$ if $|\xi|\geq 1$. Note that this property does not depend on the cut-off function $\chi$. We will consider only classical symbols in this paper.

\item These notions are diffeomorphism invariant, and thus allow to define symbols on vector bundles:  given a smooth manifold $V$ and a smooth vector bundle $N$ over $V$, we may define the space of classical symbols $S^{m}_{cl}(V,N)$ on the bundle $N$: these are functions $a$ on the total space $N$ which admit an expansion $a\sim \sum_{k=-\infty}^m a_k$  as above in any chart where the bundle $N$ is trivial. 

\item We will be mostly interested in the subspace  $S^{m}_{cl,c}(V,N)\subset S^{m}_{cl}(V,N)$ of symbols whose support is compact on the $V$ direction, \ie such that there exists a compact subset $K$ in $V$ with $a(\xi)=0$ whenever $p(\xi)\not\in K$ ($p:N\to V$ is the bundle projection).

\item The above definitions extend to give spaces $S^{m}_{cl}(V,N;E)$ and $S^{m}_{cl,c}(V,N;E)$ of symbols with values in a smooth vector bundle $E$ over $V$ (considering $E$ as a subbundle of a trivial bundle). 
\end{itemize}

\subsection{Pseudodifferential generalized functions and submersions}

\begin{remark}[on densities] 
In order to make our constructions (which use integration) independent on choices of Lebesgue measures, we use densities everywhere. We just indicate which densities one has to take, with no further explanations most of the time.
\end{remark}

\subsubsection{Distributions transverse to a submersion}

Let $M,N$ be manifolds, $p:N\to M$ a submersion and $E$ a vector bundle on $M$. Let $P\in C_c^{-\infty}(N;\Omega^1\ker dp\otimes p^*E)$ be a distribution with compact support on $N$. It defines a distribution $p_!P\in C_c^{-\infty}(M;E)$ by a formula $\langle p_!P,f\rangle=\langle P,f\circ p\rangle$ ($f\in C^\infty(M;\Omega^1M\otimes E^*$). 

Let $F$ be a vector bundle on $N$. A distribution $P\in C^{-\infty}(N;F)$ on $N$ is said to be \emph{transverse} to $p$ if for every $f\in C_c^\infty (N;\Omega^1\ker dp\otimes F^*)$, the distribution $p_!(f.P)$ is smooth on $M$. If $P$ is transverse to $p$, it restricts to a distribution $P'$ on $N'=p^{-1}(M')$ for every submanifold $M'$ of $M$. The distribution $P'$ is obviously transverse to the restriction $p':N'\to M'$ of $p$.

\subsubsection{Generalized functions with pseudodifferential singularities}

\paragraph{On a vector bundle.}
Let $V$ be a smooth manifold and $N$ a smooth vector bundle over $V$. A symbol $a\in S^m_{cl,c}(V,N^*;\Omega^{1}N^*)$ defines a generalized function of pseudodifferential type on the total space of $N$ which is given by a formal expression (for $u\in N$):
$$P_a(u)=\int_{N^*_{p(u)}} a(p(u),\xi) e^{i\langle u,\xi\rangle}$$
where $p:N\to V$ is the bundle map and the integral is an  ``oscilatory integral''. This ``function'' $P_a$ makes  sense as a distribution on the total space of $N$ \ie elements of  the dual space of the space of smooth function compact support on the total space of $N$ (actually smooth sections of a suitable bundle of one densities). Furthermore, the image of this distribution along the map $p:N\to V$ is (defined and) smooth; we may therefore consider $P_a$ as a $C^\infty (V)$ linear map from $C_c^\infty (N;\Omega^{1}N)\to C^\infty (V)$ through a formula (for $v\in V$ - here $k$ is the dimension of the bundle $N$)
\begin{eqnarray} 
\label{eq11}
\langle P_a,f\rangle(x)=(2\pi)^{-k}\int_{N^*_x\times N_x} a(x,\xi) e^{-i\langle u,\xi\rangle}f(u)=(2\pi)^{-k}\int_{N^*_x} a(x,\xi) \hat f(\xi).
\end{eqnarray}
Integrating along the manifold $V$ we obtain a distribution on the total space of $N$. We will sometimes write (formally) \begin{eqnarray} 
\label{eq12}
P_a=(2\pi)^{-k}\int_{N^*_x} a(x,\xi) e^{-i\langle u,\xi\rangle}
\end{eqnarray}
Almost by definition, the distribution $P_a$ is transverse to the projection $p:N\to M$.

\paragraph{Along a submanifold.}
Let $U$ be a smooth manifold and $V$ a closed smooth submanifold of $U$. Denote by $N$ the normal bundle to $V$.

We will use the tubular neighborhood construction. Let us briefly fix the notation: this is given by a neighborhood $U_1$ of $V$ in $U$ and a local diffeomorphism  $\phi:U_1\to N$  such that, for $v\in V$,  $\phi(v)=(v,0)$ and $d\phi $ restricted to $V$ is the identity in the normal direction. More explicitly, note that for $v\in V$,  $T_{(v,0)}N=T_vV\oplus N_v$; the above condition means that $d\phi_v$ composed with the second projection is the projection $T_vU\to N_v=T_vU/T_vV$.

A \emph{generalized function} on $U$ \emph{with pseudodifferential singularity} on $V$ is a generalized function, which far from $V$ is smooth, and near $V$ coincides with a generalized function of pseudodifferential type through a tubular neighborhood construction.  

In other words $P$ is of the form $P=h+\chi \cdot P_a \circ \phi$ where
\begin{itemize}
\item $(U_1,\phi )$ is a tubular neighborhood construction as above,
\item $h\in C^\infty(U)$,
\item $\chi $ is a smooth ``bump'' function equal to $1$ in a a neighborhood of $V$ and to $0$ outside $U_1$;
\item $a\in S^m_{cl}(V,N^*;\Omega^{1}N^*)$ is a (classical) symbol.
\end{itemize}

Concretely, such a pseudodifferential function is a distribution on $U$: if $f\in C_c^\infty(U;\Omega^1(TU))$, we put
$$\langle P,f\rangle=\int_U h(u)f(u)+(2\pi)^{-k}\int _{N^*U_1}a(p\circ \phi (u),\xi)\chi(u)f(u)e^{-i\langle \phi(u),\xi\rangle}$$

The generalized functions on $U$ with  pseudodifferential singularities on $V$ form a vector space that will be denoted by $\cP(U,V)$. We denote by  $\cP_c(U,V)$ those which vanish outside a compact subset of $U$ (\ie of the form $\chi P$ where $\chi\in C_c^\infty(U)$ and $P\in \cP(U,V)$).

\begin{example}\label{vectorfields}
 Choose a metric on $U$ and thus a trivialization of all densities. Let $X$ be a vector field with compact support on $U$. The map $q_X:f\mapsto \int_V Xf$ is an example of a (pseudo)differential distribution. Note that if $X$ is tangent to $V$, then $\int_V Xf=-\int_V {\rm div}(X)f$. In other words, $q_X$ only depends up to order zero operators on the image of $X$ in the normal bundle. 
\end{example}

Let us state a few facts about these generalized functions that we will use extensively:

\begin{itemize}
\item We may extend the construction  of pseudodifferential functions and define pseudodifferential \emph{sections} of any smooth (complex) vector bundle $E$ over $U$. These also give rise to distributions as above, \ie linear mappings on $C_c^\infty(U;\Omega^{1}TU\otimes E^*)$. We denote by $\cP(U,V;E)$ the space they form - and $\cP_c(U,V;E)$ the subspace of those with compact support.

\item The space of generalized functions with pseudodifferential singularities doesn't depend on the choice of $\phi:U\to N$ with the above requirements. 
\end{itemize}

We immediately deduce:

\begin{proposition}
A pseudodifferential distribution $P\in \cP(U,V;E)$ is transverse to any submersion $p:U\to M$ which is transverse to $V$. \hfill $\square$
\end{proposition}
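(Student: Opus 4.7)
Writing $P = h + \chi\cdot(P_a\circ\phi)$ as in the definition, the smooth part $h$ is trivially transverse to any submersion, so the entire content is in the singular piece near $V$. The hypothesis that $p$ is transverse to $V$ means $T_vV+\ker dp_v=T_vU$ for every $v\in V$, equivalently that the restriction $p|_V\colon V\to M$ is itself a submersion.

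The strategy is to reduce to a local normal form. Near any $v\in V$, I claim one can find coordinates $(x_1,\ldots,x_d,y_1,\ldots,y_k)$ on $U$ in which \emph{simultaneously} $V=\{y=0\}$, $p$ is the coordinate projection $(x,y)\mapsto(x_1,\ldots,x_m)$ (with $m=\dim M$), and $\phi$ may be taken to be the identity (using that $\cP(U,V)$ does not depend on the choice of tubular neighborhood, so one is free to pick $\phi$ last). The submersion form theorem for $p$ produces coordinates realizing the projection form, and transversality is exactly what is needed to adjust the transverse complement so that $V$ simultaneously becomes $\{y=0\}$ without disturbing the projection form of $p$.

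In such coordinates the singular part takes the model shape $P_a(x,y)=\int a(x,\xi)\,e^{-iy\cdot\xi}\,d\xi$, and the fibres of $p$ are the $(x_{m+1},\ldots,x_d,y)$-planes. For $f\in C_c^\infty(U;\Omega^1\ker dp\otimes F^*)$, interchanging the oscillatory integral with the fibre integration (the usual regularization argument) yields
\[
p_!(f\cdot P_a)(x_1,\ldots,x_m)=\int a(x,\xi)\,\widehat f_y(x,\xi)\,dx_{m+1}\cdots dx_d\,d\xi,
\]
where $\widehat f_y(x,\xi)=\int f(x,y)\,e^{-iy\cdot\xi}\,dy$ is the partial Fourier transform of $f$ in $y$. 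Since $f$ is smooth with compact support, $\widehat f_y(x,\cdot)$ is Schwartz in $\xi$ locally uniformly in $x$; combined with the polynomial growth of the classical symbol $a$ this makes the $\xi$-integral absolutely convergent, and the same holds after arbitrarily many $x$-derivatives. Patching over a locally finite cover of $V$ by such charts, together with the fact that $P$ is smooth away from $V$, gives smoothness of $p_!(f\cdot P)$ on all of $M$.

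The only step requiring real care is the joint coordinate normalization, and this is exactly where the transversality hypothesis enters; after that the conclusion is the classical fact that Fourier transforming a compactly supported smooth test function absorbs the polynomial growth of any symbol. Tracking the density conventions is routine bookkeeping and introduces no further difficulty.
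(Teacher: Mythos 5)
Your argument is correct and is exactly the deduction the paper treats as immediate: reduce by the local normal form (using the stated independence of $\cP(U,V)$ on the choice of $\phi$) to the model case where the singular part is $P_a$ on a trivialized normal bundle, and then observe, as in equation (1.1) of the paper, that pairing $P_a$ with a test density along the fibres produces $\int a(x,\xi)\widehat f_y(x,\xi)$, which is smooth because the partial Fourier transform of a compactly supported smooth function is Schwartz in $\xi$. Nothing further is needed.
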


Notice that the smooth function on $U\setminus V$ associated with a generalized function $P$ doesn't determine $P$: if the symbol is a polynomial - then $P$ is differential and is supported by $V$ - \ie vanishes outside $V$.

\subsubsection{Density of smooth functions}\label{density}

Let $P\in \cP(U,V)$ be given by a formal formula $$P(u)=h(u)+(2\pi)^{-k}\chi(u)\int _{N^*_{p(u)}}a(p\circ \phi (u),\xi)e^{-i\langle \phi(u),\xi\rangle}$$ Let $\chi_1$ be a smooth nonnegative function with compact support on $\R_+$ which is equal to $1$ in a neighborhood of $0$. Put then $$P_n(u)=h(u)+(2\pi)^{-k}\chi(u)\int _{N^*_{p(u)}}a(p\circ \phi (u),\xi)\chi_1(\|\xi\|/n)e^{-i\langle \phi(u),\xi\rangle}$$ Then $P_n\in C_c^\infty(U)$ and converges to $P$ in the topology of $C^{-\infty}$. Furthermore, for every submersion $q:U\to M'$ which is transverse to $V$, and every $f\in C^\infty(U;\Omega^1\ker dp)$, the sequence of $p_!(fP_n)$ of smooth functions on $M'$  converges to $p_!(fP)$ in the topology of $C_c^\infty(M')$.

\subsubsection{Principal symbol}

A generalized function $P\in \cP(U,V)$ of order $m$ with pseudodifferential singularities has a principal symbol. If $P$ is associated with a symbol $a$ of order $m$, then the principal symbol $\sigma_{m}(P)$ of $P$ is the homogeneous part of $a$ of order $m$. It is defined outside the zero section on the total space of $N$ and $\sigma_{m}(P)(x,\xi)$ is a $1$-density on $N_x^*$ (for $x\in V$ and $\xi\in N_x^*$ non zero). By choosing smoothly a euclidean metric of the bundle $N$, it can be defined as an element $\sigma_{m}(P) \in C^{\infty}(S^{*}N)$, where $S^{*}N$ is the co-sphere bundle of $N^{*}$. 

\begin{proposition} \label{exactsequence}
We have an exact sequence $$0 \to \Psi^{m-1}(V,U) \to \Psi^{m}(V,U) \stackrel{\sigma_{m}}{\longrightarrow} C^{\infty}(S^{*}N) \to 0$$
\end{proposition}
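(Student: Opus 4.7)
The plan is to verify exactness at each of the three spots, with the only nontrivial piece being well-definedness of $\sigma_m$ under the freedom in the tubular-neighborhood data. I will write $\Psi^m(U,V)$ for the operators of order $\leq m$ (the proposition's $\Psi^m(V,U)$).

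First, for surjectivity: given $b \in C^\infty(S^*N)$, extend it radially to a function $\tilde b$ on $N^*\setminus\{0\}$ homogeneous of degree $m$, multiply by a smooth cutoff $\rho(\xi)$ that vanishes in a neighbourhood of the zero section and equals $1$ for $\|\xi\|\geq 1$, and multiply by a compactly supported cutoff on $V$ (to match the definition of $S^m_{cl,c}$). The result $a = \rho\,\tilde b$ lies in $S^m_{cl}(V,N^*;\Omega^1 N^*)$ with leading homogeneous term $\tilde b$. The associated $P_a$ (with any tubular neighborhood $\phi$ and bump $\chi$) lies in $\cP^m(U,V)$ and satisfies $\sigma_m(P_a) = b$.

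For exactness in the middle: if $\sigma_m(P)=0$ then the top homogeneous term $a_m$ in the expansion $a\sim\sum_{k\leq m}a_k$ vanishes. By the definition of the polyhomogeneous expansion recalled in \S1.1, $a - \chi \sum_{k=m-M+1}^m a_k\in S^{m-M}$ for all $M$; with $a_m = 0$ this reads $a\in S^{m-1}_{cl}$, hence $P\in\cP^{m-1}(U,V)$. Conversely a symbol in $S^{m-1}_{cl}$ has vanishing $m$-th homogeneous part, so $\sigma_m$ kills $\Psi^{m-1}(U,V)$.

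The real content is the well-definedness of $\sigma_m$: I must show that the top homogeneous term of $a$ is independent of the tubular data $(U_1,\phi,\chi)$ and of the smooth background $h$, modulo $S^{m-1}_{cl}$. Changing $h$ or $\chi$ only modifies $P$ by a smooth function, i.e.\ by an element with symbol of order $-\infty$, which is harmless. For a change of tubular neighbourhood $\phi\mapsto\phi'$, the composition $\psi=\phi'\circ\phi^{-1}$ is a local diffeomorphism of (a neighborhood of the zero section of) $N$ fixing $V$ pointwise, whose differential along $V$ is the identity on $N$. The transformation rule for classical symbols under such a fibre-preserving diffeomorphism (equivalent to the usual coordinate-change formula for pseudodifferential operators, obtained by stationary phase on the integral defining $P_a$) shows that the new symbol $a'$ has the same top-order homogeneous term as $a$ modulo $S^{m-1}_{cl}$. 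This is the standard argument in the flat case and carries over verbatim here because $\psi$ is the identity on the base $V$ and on the normal direction to first order.

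The main obstacle is this last invariance statement. I would reduce it to the classical symbol-invariance theorem by localizing in a chart where $N$ is trivial, writing $\psi(v,\nu)=(v+\alpha(v,\nu),\,\nu+\beta(v,\nu))$ with $\alpha,\beta$ vanishing to suitable order on $V$, and then computing the pullback of $P_a$ by $\psi$ as an oscillatory integral; stationary phase with phase $\langle\nu,\xi\rangle$ produces an asymptotic expansion whose leading term reproduces $a_m$. Everything else in the proposition then follows immediately.
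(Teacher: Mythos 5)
Your proposal is correct, but for the one step the paper regards as the actual content --- the well-definedness of $\sigma_m$ --- you take a genuinely different route. The paper does not analyse how the symbol transforms under a change of tubular neighbourhood at all: it characterizes the principal symbol intrinsically by oscillatory testing, namely $\sigma_m(x,\xi)=\lim_{\tau\to+\infty}(i\tau)^{-m}P(e^{i\tau\varphi}\chi)(x)$ with $\varphi$ having derivative $\xi$ at $x$ along $N_x$, after reducing to $U=N$ and viewing $P$ as a $C^\infty(V)$-linear map $C^\infty_c(N)\to C^\infty_c(V)$. Since the right-hand side depends only on $P$ as an operator, independence of $(U_1,\phi,\chi,h,a)$ is immediate, and the same formula is reused later (in the remark on the longitudinal principal symbol) to discuss when the symbol descends to $\cF^*$. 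Your route --- reduce to a fibre-preserving diffeomorphism $\psi=\phi'\circ\phi^{-1}$ fixing $V$ with normal differential the identity, then invoke the stationary-phase change-of-variables expansion --- is the other standard argument and is sound; it is more computational but makes explicit \emph{why} the leading term survives (the normal differential of $\psi$ along $V$ is the identity, so the induced action on $N^*$ is trivial). Two small points you leave implicit: (i) after normalizing $\phi=\phi'$ you still need the flat-case fact that $P_a$ smooth forces $a$ to be of order $-\infty$ (recover $a$ by Fourier transform along the fibres), otherwise two distinct symbols with the same $\phi$ could a priori give the same distribution; (ii) your compactly supported cutoff in the $V$-direction is out of place if the target is all of $C^{\infty}(S^*N)$ rather than its compactly supported version --- drop it, or state the sequence with matching support conditions. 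Neither affects the substance.
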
 
\begin{proof}
The only thing that has to be proved is that $\sigma_m(P)$ only depends on $P$. This is a classical fact (see \eg \cite{Nistor-Weinstein-Xu}). Let us recall this briefly:

 We may assume $U=N$. Then $P$ defines a $C^\infty(V)$-linear map $C^\infty_c(N)\to C^\infty_c(V)$ (using appropriate densities). Let $x\in V$ and $\xi\in N_x$ a non zero covector. Then $\sigma_m(x,\xi)=\lim_{\tau\to +\infty} (i\tau)^{-m}P(e^{i\tau\varphi}\chi )(x)$ where $\varphi\in C_c^\infty(N)$ with  derivative $\xi$ at $x\in V$ (the zero point of $N_x$) along $N_x$ - and $\chi \in C_c^\infty(N)$ is equal to $1$ in a neighborhood of $x$.
\end{proof}
We may of course add bundles into the picture: if $P\in \cP(U,V;E)$ then $\sigma_{m}(P)(x,\xi)\in \Omega^1(N_x^*)\otimes E_x$ (for $x\in V$ and $\xi\in N_x^*$ non zero). 

It is easy to see that generalized functions satisfy all the properties of classical pseudodifferential operators. For future reference in this sequel we recall the following one; it is the key ingredient that provides the existence of parametrices for elliptic pseudodifferential operators.

\begin{theorem}\label{smoothing}
Let $(Q_n)_{n\in \N}$ be a sequence of pseudodifferential functions such that $Q_n-Q_{n+1}$ is of order $m-n$. Then there exists a pseudodifferential function $Q$ such that $Q-Q_n$ is of order $m-n$ for all $n$.
\end{theorem}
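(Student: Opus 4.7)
The plan is to perform a classical Borel-type asymptotic summation of symbols. Set $P_n := Q_{n+1} - Q_n$, so each $P_n \in \cP(U,V)$ has order $m-n$; formally we want $Q = Q_0 + \sum_{n \geq 0} P_n$, since then $Q - Q_n = \sum_{k \geq n} P_k$ will have order $m-n$.

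The first move is to reduce to a statement purely about symbols. Using a locally finite partition of unity subordinate to charts that either miss $V$ entirely (on which $P_n$ is smooth, so nothing needs to be summed beyond absorbing smooth remainders) or lie inside a tubular neighborhood $\phi : U_1 \to N$ trivializing $N$, I reduce to the case $U = V \times \R^k$ where each $P_n$ is of the form $P_{a_n}$ with $a_n \in S^{m-n}_{cl}(V, N^*; \Omega^1 N^*)$ as in \eqref{eq11}, modulo smooth remainders that I collect into the $h$-part of $Q_0$.

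The core construction is of a classical symbol $a \sim \sum_n a_n$. Fix a cutoff $\chi \in C^\infty(\R_+)$ with $\chi(t)=0$ for $t \leq 1/2$ and $\chi(t)=1$ for $t \geq 1$, and pick a sequence $t_n \to +\infty$ to be fixed shortly. Set
\begin{equation*}
a(x,\xi) \;:=\; \sum_{n=0}^{\infty} \chi(|\xi|/t_n)\, a_n(x,\xi).
\end{equation*}
For any fixed $(x,\xi)$ only finitely many terms are non-zero, since $\chi(|\xi|/t_n) = 0$ once $t_n \geq 2|\xi|$, so $a$ is a well-defined smooth function. A standard diagonal argument over a countable basis of $S^{m-n}$-seminorms (indexed by a compact exhaustion of $V$ and pairs of multi-indices) lets me choose $t_n$ growing fast enough so that, for every $n$,
\begin{equation*}
a - \sum_{k=0}^{n-1} a_k \;=\; \sum_{k \geq n} \chi(|\xi|/t_k)\,a_k \;-\; \sum_{k=0}^{n-1} \bigl(1-\chi(|\xi|/t_k)\bigr)\,a_k
\end{equation*}
lies in $S^{m-n}_{cl}$. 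Each term $(1-\chi(|\xi|/t_k))a_k$ in the second sum is compactly supported in $\xi$ and hence of order $-\infty$; the first sum is controlled because for $k \geq n$ the factor $\chi(|\xi|/t_k)$ is active only on $|\xi| \geq t_k/2$, producing a decay in the $S^{m-n}$-seminorm that can be made summable by taking $t_k$ large.

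Setting $Q := Q_0 + P_a$ then yields the required object: the symbol of $Q - Q_n$ is $a - \sum_{k=0}^{n-1} a_k$, which is of order $m-n$, so $Q - Q_n$ has order $m-n$ for all $n$. The only real obstacle is the bookkeeping of the diagonal selection of $t_n$ so that all the $S^{m-n}$-seminorms are simultaneously dominated; this is the classical asymptotic-summation-of-symbols lemma (\cf \cite{Nistor-Weinstein-Xu}), and the patching from charts back to $\cP(U,V)$ via the tubular neighborhood is routine.
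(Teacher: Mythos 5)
The paper does not actually prove Theorem \ref{smoothing}: it is recalled as a classical fact about pseudodifferential distributions (with the general reference to the standard literature), so there is no in-paper argument to compare against. Your proposal is the standard Borel-type asymptotic summation of symbols and is essentially correct; the reduction to a tubular neighborhood, the cutoff construction $a=\sum_n\chi(|\xi|/t_n)a_n$, and the diagonal choice of $t_n$ are exactly the textbook route. One small point of bookkeeping: you cannot literally ``collect the smooth remainders $h_n$ into the $h$-part of $Q_0$,'' since the infinite sum $\sum_n h_n$ need not converge; but this is harmless, because with $Q:=Q_0+P_a$ each difference $Q-Q_n$ involves only the finitely many $h_k$ with $k<n$, whose (finite) sum is smooth and hence of order $-\infty$, so the conclusion stands as you state it.
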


\begin{example}
The principal symbon of $q_{X}$ in example \ref{vectorfields}  is $\xi\mapsto i\langle X|\xi\rangle$.
\end{example}

\subsection{Pull-back, push-forward, product}

\subsubsection{Pull-back (restriction)}

Let $U$ and $U'$ be smooth manifolds and $V\subset U$ a closed submanifold.  Let  $p:U'\to U$ be a smooth map, transverse to $V$ and put $V' = p^{-1}(V)$. It is a submanifold of $U'$. Let $P\in \cP(U,V)$. Locally (near a point of $V'$) we may assume $U=V\times \R^k,\ \ U'=V'\times \R^k$ and $p(x',u)=(p(x'),u)$ for $x'\in V'$ and $u\in \R^k$. 

We may then assume that $P$ is given (formally - see equation \ref{eq12})) by a formula $$P(x,u)=(2\pi)^{-k}\int _{\R^k} e^{-i\langle u,\xi\rangle} a(x,\xi)+h(x,u)$$ where $h $ is smooth and $a$ is a symbol.

We then define $p^*P$ setting $(p^*P)(x',u)=P(p(x'),u)$
 
 Under the identification of the normal bundle $N'$ of $V'$ in $U'$ with $p^*N$, the principal symbol of  $p^*P$ is given by  $\sigma_{m}(p^*P)=\sigma_{m}(P)\circ p$.

\subsubsection{Push-forward (partial integration)}

\begin{proposition}[\bf Push-forward] \label{push-forward}
Let $U$ and $U'$ be smooth manifolds and $V\subset U$ a closed submanifold.  Let $p:U\to U'$ be a submersion which restricts to a diffeomorphism $p:V\to V'$ where $V'$ is a submanifold of $U'$. Let $E'$ be a vector bundle on $U'$. 
\begin{enumerate}
\item Integration along the fibers of $p$ gives rise to a map $p_!:\cP_c(U,V;\Omega^{1}\ker dp\otimes p^*E')\to \cP _c(U',V';E')$ defined by $\langle p_!(P),f\rangle=\langle P,f\circ p\rangle$  for $f\in C^\infty (U;\Omega^1TU\otimes E^*)$. The principal symbol of $p_!P$ is $\sigma'_m(x,\xi)=\sigma_m(x,p^*\xi)$ where $x\in V'\simeq V$, and $p^*$ is the (injective) map $(T_xU'/T_xV)^*\to (T_xU/T_xV)^*$ induced by $p$.
\item If $p$ is onto, then $p_!$ is onto too.
\end{enumerate}
\end{proposition}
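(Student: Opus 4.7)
The plan is to reduce to local coordinates adapted to both $V$ and $p$. Since $p$ is a submersion and $p|_V:V\to V'$ is a diffeomorphism, near each point of $V$ we may choose coordinates in which $U = V' \times \R^{k'} \times \R^\ell$, $U' = V' \times \R^{k'}$, $V = V' \times \{0\} \times \{0\}$, $V' = V' \times \{0\}$, and $p(y,u,w) = (y,u)$. In these coordinates the normal bundle satisfies $N \simeq N' \oplus F$ with $F = \R^\ell$ the vertical direction, and $p^*:(N')^*\to N^*$ is the inclusion into the first summand.

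For part (1), I would write $P = h + \chi P_a$ with $P_a$ given locally by
\begin{equation*}
P_a(y,u,w) = (2\pi)^{-(k'+\ell)} \int e^{-i(\langle u,\xi\rangle + \langle w,\eta\rangle)} a(y,\xi,\eta)\, d\xi\, d\eta.
\end{equation*}
The smooth piece $h$, already carrying its $\Omega^1\ker dp$ factor, integrates fiberwise to a compactly supported smooth function on $U'$. For the singular piece, the key formal identity $\int_{\R^\ell} e^{-i\langle w,\eta\rangle}\, dw = (2\pi)^\ell \delta(\eta)$ yields $p_!(\chi P_a) = P_{a'}$ with $a'(y,\xi) := a(y,\xi,0)$. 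To make this rigorous I would invoke the density result of Subsection \ref{density}: approximate $a$ by symbols compactly supported in $(\xi,\eta)$, apply Fubini to the smooth approximants (noting that the $\chi$ cutoff keeps everything compactly supported in $(u,w)$), and pass to the limit in the push-forward topology stated there. Reading off the leading homogeneous part of $a'$ then gives $\sigma'_m(y,\xi) = \sigma_m(P)(y,\xi,0) = \sigma_m(P)(y,p^*\xi)$.

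For part (2), given $Q = h' + \chi' P_b \in \cP_c(U',V';E')$, I would lift the symbol and the smooth piece separately. Using the splitting $N^* = (N')^* \oplus F^*$, set $a(y,\xi,\eta) := \rho(\xi,\eta)\, b(y,\xi)$, where $\rho$ is smooth on $N^*\setminus 0$, homogeneous of degree $0$, equal to $1$ on $\{|\eta|\leq |\xi|\}$ and vanishing on $\{|\eta|\geq 2|\xi|\}$, multiplied by a cutoff away from the origin. The conic support of $\rho$ ensures that $\rho \cdot b$ extends smoothly across $\{\xi = 0\}$, so $a$ is a classical symbol on $N^*$ of the same order as $b$, and $\rho(\xi,0) = 1$ for $\xi \neq 0$ gives $a(y,\xi,0) = b(y,\xi)$ modulo a smooth compactly supported term. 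For $h'$, a partition-of-unity argument over a finite cover of $\mathrm{supp}(h')$ by open sets trivializing $p$ lets one multiply $h'\circ p$ by bump functions with fiberwise integral $1$ to produce a compactly supported $h$ on $U$ with $p_! h = h'$. Combining and absorbing the discrepancy into the smooth part yields $P \in \cP_c(U,V;\Omega^1\ker dp\otimes p^*E')$ with $p_! P = Q$.

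The main obstacle will be the rigorous oscillatory-integral computation in part (1), since $\int e^{-i\langle w,\eta\rangle}\, dw$ does not converge absolutely and the identification $p_!P_a = P_{a|_{\eta=0}}$ must be obtained by approximation. The density of smooth compactly supported functions in $\cP(U,V)$, together with the continuity of push-forward on distributions transverse to $p$ recorded in Subsection \ref{density}, furnishes exactly the required justification.
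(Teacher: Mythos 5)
Your part (1) takes the paper's own route: reduce to the local model $p:V\times\R^{k}\times\R^{\ell}\to V\times\R^{k}$ and integrate out the fibre variable. The one place where you are looser than the paper is the identification of the resulting symbol: because of the bump function $\chi_2(w)$ in the fibre direction, the $w$-integration produces $\widehat{\chi_2}(\eta)$ rather than $(2\pi)^{\ell}\delta(\eta)$, so the push-forward symbol is $(2\pi)^{-\ell}\int a(x,\xi,\eta)\widehat{\chi_2}(\eta)\,d\eta$; one then Taylor-expands $a$ at $\eta=0$ and uses $\int\eta^{\alpha}\widehat{\chi_2}(\eta)\,d\eta=(2\pi)^{\ell}(i\partial)^{\alpha}\chi_2(0)=0$ for $\alpha\neq0$ to recover $a(x,\xi,0)$ modulo lower order. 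This is exactly the computation the paper performs, and the density argument of \S\ref{density} that you invoke does not by itself replace it: continuity of $p_!$ identifies the limit as a distribution, but not as an element of $\cP_c(U',V')$ with the claimed symbol. Your part (2), by contrast, is genuinely different from the paper's: you lift the full classical symbol in one step via a degree-zero conic cutoff $\rho$ equal to $1$ on a conic neighbourhood of $\{\eta=0\}$, so that all $\eta$-derivatives of the lifted symbol vanish at $\eta=0$ and the push-forward reproduces $b$ modulo smoothing terms, which are then absorbed with the smooth part by a partition of unity. The paper instead extends only the principal symbol, corrects order by order by induction, and invokes the asymptotic summation theorem \ref{smoothing}. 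Your route avoids the induction and the appeal to Theorem \ref{smoothing} altogether, at the price of checking that $\rho b$ is a classical symbol of order $m$ (which holds because $|\xi|+|\eta|$ is comparable to $|\xi|$ on the support of $\rho$) and of using the refined form of part (1) just described; the paper's route is more robust in that it only needs the top-order conclusion of part (1). Both are correct.
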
 

\begin{proof}
\begin{enumerate}
\item We may assume that $p : V \times \R^{k}\times \R^{\ell}  \to V \times \R^k$ is the projection and that $P$ is given by a formula $$\langle P,f\rangle =(2\pi)^{-(k+\ell)}\int e^{-i(\langle u,\xi\rangle +\langle v,\eta\rangle)}a(x,\xi,\eta)\chi_1(u)\chi_2(v)f(x,u,v).$$
We thus get $$\langle p_{!}P,f \rangle =\langle P,f \circ p \rangle = (2\pi)^{-(k+\ell)} \int_{V\times \R^{\ell }\times (\R^{k+\ell })^{*} }a(x,\xi,\eta)e^{-i\langle u,\xi\rangle}\chi_1(u)\widehat{\chi_2}(\eta) f(x,u)$$ Using a Taylor expansion of the form $$a(x,\xi,\eta)\sim a(x,\xi,0)+\sum_{1\le |\alpha|}\frac{\eta^{\alpha}}{\alpha !}\frac{\partial^{|\alpha|}}{(\partial \eta)^\alpha} a(x,\xi,0)$$ we find the principal term $$ (2\pi)^{-(k+\ell)} \int_{V\times \R^{\ell }\times (\R^{k+\ell })^{*} }a(x,\xi,0)e^{-i\langle u,\xi\rangle}\chi_1(u)\widehat{\chi_2}(\eta) f(x,u).$$ Since $(2\pi)^{-\ell}\int \widehat{\chi_2}(\eta) =\chi_2(0)=1$ we find $$(2\pi)^{-k} \int_{V\times \R^{\ell }\times (\R^{k})^{*} }a(x,\xi,0)e^{-i\langle u,\xi\rangle}\chi_1(u) f(x,u)  $$

\item Let $P'\in \cP _c(U',V';E')$. One obviously may extend the principal symbol of $P'$ to get a homogeneous section on the normal bundle of $V$ in $U$. It follows that there exists an operator $P_1\in \cP_c (U,V;p^*E')$ such that $p_!P_1-P'$ is of order $m-1$. Using induction, one constructs a sequence $P_n\in \cP_c (U,V;p^*E')$ such that $p_!P_n-P'$ is of order $m-n$ and $P_{n+1}-P_n$ is of order $m-n$. Using theorem \ref{smoothing} , one then gets $Q$ such that $Q-P_n$ is of order $m-n$, whence $p_!Q-P'$ is smoothing. Finally, using partitions of the identity, it is obvious that $p_!:C_c^\infty(U;p^*E')\to C_c(U',E')$ is onto.\qedhere
\end{enumerate}
\end{proof}

\begin{remarks}\label{push-forward-remarks} \begin{enumerate}
\item We will also need a slightly more general statement:\\
In the above proposition, we may just assume that $p$ induces a submersion $p:V\to V'$ where $V'$ is a submanifold of $U'$. In that case, for $P\in \cP(U,V;\Omega^1(\ker dp)\otimes p^*(E'))$, the principal symbol $\sigma'(x',\xi')$ of $p_!P$ is the integral of $\sigma(x,p_x^*(\xi'))$ for $x$ running in the fiber $V\cap p^{-1}(x')$ (and $p_x^*:(T_{x'}U'/T_{x'}V')^*\to (T_xU/T_xV)^*$ is the (injective) map induced by $(dp)_x$) (\footnote{An easy check shows that the densities match correctly.}).

To establish this, one may assume $U'=V'\times \R^k$, $U=V'\times \R^j\times \R^k\times \R^\ell$, $V=V'\times \R^j\times \{(0,0)\}$ and $p$ is the obvious projection $V'\times \R^j\times \R^k\times \R^\ell\to V'\times \R^k$.

\item Obviously, we have $(q\circ p)_!=q_!\circ p_!$ if $p:U\to U'$ and $q:U'\to U''$ are submersions satisfying requirements of (a).
\end{enumerate}
\end{remarks}

\subsubsection{Products}

In order to understand the product of pseudodifferential operators in our context, we give the following Lemma. We say that a submersion $p:U\to U'$ is \emph{strictly transverse} to a submanifold $V\subset U$ if at any point $x\in V$ we have $T_xV\oplus \ker dp_x=T_xU$.

\begin{lemma}\label{linearization}
Let $U$ be a manifold, $V_1,V_2$ two closed submanifolds of $U$ that are transverse to each other and $p : U \to U'$ a submersion strictly transverse to both $V_{1}$ and $V_{2}$. Then there are charts of $U$ of the form the form $W \times \R^{k} \times \R^{k}$ covering $V = V_{1} \cap V_{2}$, such that $V_{1} = W\times \R^{k} \times \{0\}$ and $V_{2} = W \times \{0\} \times \R^{k}$  and $p$ can be written as $(v,\xi,\eta) \mapsto (v,\xi + \eta)$.
\end{lemma}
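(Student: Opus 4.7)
The plan is to build the desired chart around an arbitrary point $x_0 \in V$ by a short sequence of explicit coordinate changes, using strict transversality of $p$ with the $V_i$ as the key input; taking such a chart at each point of $V$ then yields the required cover.

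First, I would use that $p$ is a submersion to choose local coordinates on $U$ of the form $(y,z) \in U' \times \R^k$ in which $p$ is the projection $(y,z) \mapsto y$. Strict transversality of $p$ with $V_i$ makes $\ker dp_x = \{0\} \times \R^k$ a linear complement of $T_xV_i$, so $V_i$ meets each fibre $\{y\} \times \R^k$ transversally in a single point near $x_0$ and is therefore the graph $V_i = \{z = f_i(y)\}$ of a smooth map $f_i : U' \to \R^k$. Ordinary transversality of $V_1$ and $V_2$ in $U$ translates at a point $(y,f_1(y)) \in V$ into surjectivity of $d(f_1-f_2)_y$, so $g := f_1 - f_2$ is a submersion near $p(x_0)$ and $p(V)$ coincides locally with the codimension-$k$ submanifold $\{g = 0\}$ of $U'$.

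Next, I would change coordinates on $U'$ near $p(x_0)$ to $(w,\tau) \in W \times \R^k$ with $\tau = g/2$, and pull these back to the base of the chart of $U$. In these coordinates $p(w,\tau,z) = (w,\tau)$, and writing $\tilde f_i$ for $f_i$ in the new base coordinates we have $\tilde f_1 - \tilde f_2 = 2\tau$. Now I perform a fibrewise translation in $z$: replace $z$ by $\tilde z := z - \tilde f_1(w,\tau) + \tau$. Since this shift depends only on the base variables $(w,\tau)$, which are pulled back from $U'$, the expression of $p$ is unchanged, while $V_1 = \{\tilde z = \tau\}$ and $V_2 = \{\tilde z = -\tau\}$ are now symmetric graphs about the $\tilde z = 0$ section.

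Finally, I introduce $v := w$, $\xi := (\tau + \tilde z)/2$, $\eta := (\tau - \tilde z)/2$ on $U$, and $\zeta := \tau$ on $U'$. Then $\tau = \xi + \eta$ and $\tilde z = \xi - \eta$, so $V_1 = \{\eta = 0\} = W \times \R^k \times \{0\}$, $V_2 = \{\xi = 0\} = W \times \{0\} \times \R^k$, and $p(v,\xi,\eta) = (v,\xi+\eta)$, exactly as required. The only real difficulty lies in the bookkeeping: after each coordinate change one must verify that the formula for $p$ survives intact and that both $V_1$ and $V_2$ remain graphical with the expected slopes. No substantive geometric obstruction arises once the first (submersion) chart is in place, because each subsequent step is elementary linear algebra applied fibrewise and depends only on the surjectivity of $d(f_1 - f_2)$ furnished by strict transversality.
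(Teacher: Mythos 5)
Your proof is correct and follows essentially the same route as the paper's: straighten $p$ into a projection, use strict transversality to realize the $V_i$ as graphs, extract a submersion from the mutual transversality of $V_1$ and $V_2$, and finish with a fibrewise linear change of variables. The only difference is cosmetic bookkeeping: the paper first normalizes $V_1$ to the zero section and writes $V_2$ as a graph over $V_1$, whereas you keep both $V_i$ as graphs over $U'$ and symmetrize them about the zero section before the final shear.
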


\begin{proof}
Notice that $V$ is a manifold due to the transversality of $V_{1}$ to $V_{2}$. Then $V$ is covered by charts such that $U$ can be written as $U' \times T$, where $T = \R^{k}$ is the fiber of $p$, and $p$ is the projection. Since $T$ is transverse to $V_{1}$, for this chart we can write $V_{1} = U' \times \{0\}$. Since $V_{2}$ is trasversal to $V_{1}$, it is the graph of a submersion $q : V_{1} \to T$; we have $V=q^{-1}(\{0\})$, hence we can write $V_{1} = V \times \R^{k}$ and $q$ the projection. Under these identifications, we found a chart $V\times \R^k\times \R^k$, for which $V_1=V\times \R^k\times \{0\}$, $V_2=\{(x,\xi,\eta);\ \xi=\eta\}$ and $p(x,\xi,\eta)=\xi$. The result follows by applying the diffoemorphism  $(v,\xi,\eta) \mapsto (v,\xi + \eta, \eta)$ on $V \times \R^{k} \times \R^{k}$.
\end{proof}

\begin{proposition} \label{products}
Let $U$ be a manifold and $V_1,V_2$ two closed submanifolds of $U$ that are transverse to each other. Let $P_i\in \cP(U,V_i)$. 
\begin{enumerate}
\item The product $P_1\cdot P_2$ makes sense as a distribution on $U$.

\item Assume that $P_1\cdot P_2$ has compact support. Let $p:U\to U'$ be a submersion which is both strictly transverse to $V_1$ and $V_2$ and whose restriction to $V_1\cap V_2$ is injective and proper. Then $p_!(P_1.P_2)$ is a pseudodifferential function. If then $\sigma_{i}$ is the principal symbol of $P_{i}$ ($i = 1,2$), then the principal symbol of $p_!(P_1.P_2)$ is $p_!(\sigma_1) \cdot p_{!}(\sigma_2)$.
\end{enumerate}
\end{proposition}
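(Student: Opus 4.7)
The plan is to localize via Lemma \ref{linearization} and then carry out the standard oscillatory-integral computation for pseudodifferential composition. Away from $V := V_1\cap V_2$ at least one of the $P_i$ is smooth, so the product is unambiguous there; near $V$ I would cover by the charts provided by Lemma \ref{linearization}, namely $U = W\times \R^k\times \R^k$ with coordinates $(v,\xi,\eta)$, in which $V_1 = W\times \R^k\times\{0\}$, $V_2 = W\times\{0\}\times\R^k$, and (for part (2)) $p(v,\xi,\eta) = (v,\xi+\eta)$.

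For part (1), I would write $P_i$, modulo smooth terms, as oscillatory integrals in the respective conormal variables:
$$P_1 = (2\pi)^{-k}\!\int a_1(v,\xi,\eta^*)\, e^{-i\langle\eta,\eta^*\rangle}\, d\eta^*, \qquad P_2 = (2\pi)^{-k}\!\int a_2(v,\eta,\xi^*)\, e^{-i\langle\xi,\xi^*\rangle}\, d\xi^*.$$
Their formal product
$$P_1\cdot P_2 \;=\; (2\pi)^{-2k}\!\int a_1(v,\xi,\eta^*)\, a_2(v,\eta,\xi^*)\, e^{-i(\langle\xi,\xi^*\rangle + \langle\eta,\eta^*\rangle)}\, d\xi^*\, d\eta^*$$
is a bona fide oscillatory integral: the dual variables $(\xi^*,\eta^*)$ are independent (which is exactly the transversality of $V_1$ and $V_2$), and $a_1 a_2$ satisfies joint classical symbol estimates in them. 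Taylor-expanding $a_1$ in $\xi$ and $a_2$ in $\eta$ about $0$ and trading each resulting factor of $\xi$ or $\eta$ for a derivative in the dual variable via integration by parts presents $P_1 P_2$, modulo terms of strictly lower order, as an element of $\cP(U,V)$ with principal symbol $\sigma_1(v,0;\eta^*)\,\sigma_2(v,0;\xi^*)$ on $N^*V = N^*V_1|_V \oplus N^*V_2|_V$. This establishes (1), and slightly more.

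For part (2), once $P_1 P_2 \in \cP(U,V)$ and $p|_V : V \to V' := p(V)$ is a diffeomorphism (as follows from the injective-and-proper hypothesis together with strict transversality), Proposition \ref{push-forward} applies and gives $p_!(P_1 P_2) \in \cP(U',V')$, whose principal symbol is obtained by pulling back that of $P_1 P_2$ along $p^* : N^*V' \to N^*V$. In the above coordinates, $p^*\zeta^* = (\zeta^*,\zeta^*) \in (\R^k)^*\oplus(\R^k)^*$, so the principal symbol of $p_!(P_1 P_2)$ at $\zeta^* \in N^*V'$ equals $\sigma_1(v,0;\zeta^*)\,\sigma_2(v,0;\zeta^*)$; this is precisely $p_!\sigma_1 \cdot p_!\sigma_2$ under the notational convention that $p_!\sigma_i$ denotes the pullback of $\sigma_i|_V$ along the composition of $p^*$ with the projection $N^*V \to N^*V_i|_V$.

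The main technical obstacle will be rigorously controlling the oscillatory integrals: justifying the Taylor expansion, verifying that the remainders drop in order as claimed, invoking Theorem \ref{smoothing} to sum the resulting asymptotic series into a genuine classical symbol, and tracking densities through the various changes of variables. All of this is entirely parallel to the classical composition theorem for pseudodifferential operators and to the proof of Proposition \ref{push-forward}; the only new ingredient is the additive structure on the dual variables supplied by Lemma \ref{linearization}.
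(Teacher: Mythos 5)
Your localization via Lemma \ref{linearization} and the oscillatory--integral setup coincide with the paper's, and the conclusion of part (1) is correct: the independence of the dual variables $\xi^*,\eta^*$ (which encodes the transversality of $V_1$ and $V_2$) lets one regularize the double oscillatory integral against test functions by integrating by parts separately in $\xi$ and in $\eta$. The genuine gap is the claim that this ``presents $P_1P_2$, modulo terms of strictly lower order, as an element of $\cP(U,V)$'' with symbol $\sigma_1(v,0;\eta^*)\,\sigma_2(v,0;\xi^*)$ --- and, already in part (1), the assertion that $a_1a_2$ satisfies \emph{joint} classical symbol estimates. Both are false in general. The amplitude $a_1(v,0,\eta^*)\,a_2(v,0,\xi^*)$ is a symbol in $\eta^*$ and in $\xi^*$ separately, but not in the joint fibre variable $(\xi^*,\eta^*)$ of $N^*V$: its leading part $\sigma_1(v,\eta^*)\sigma_2(v,\xi^*)$ is not smooth on the axes $\{\xi^*=0,\ \eta^*\neq 0\}$ and $\{\eta^*=0,\ \xi^*\neq 0\}$ (take $\sigma_2=|\xi^*|^{m_2}$), and the estimate $|\partial_{\xi^*}^{\gamma}(a_1a_2)|\le C(1+|\xi^*|+|\eta^*|)^{m_1+m_2-|\gamma|}$ fails in the region $|\xi^*|\ll|\eta^*|$ whenever $m_2-|\gamma|<0$. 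Geometrically, $P_1P_2$ stays singular along all of $V_1\cup V_2$ (it is a distribution of product, or paired--Lagrangian, type), so it is \emph{not} conormal to $V=V_1\cap V_2$, and you cannot feed it into Proposition \ref{push-forward}, whose hypothesis is precisely membership in $\cP_c(U,V;\cdot)$. This is why the proposition is stated the way it is: only the push-forward $p_!(P_1P_2)$, not $P_1P_2$ itself, is claimed to be pseudodifferential.

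The paper's proof of part (2) accordingly does not factor through $\cP(U,V)$. It keeps the fibre integration inside the oscillatory integral, i.e.\ it studies $\int e^{i(\langle u,\xi\rangle+\langle v,\eta\rangle)}\,a_1(x,u,\eta)\,a_2(x,\xi,v)\,f(x,u+v)$ directly and recognizes it as the composition of two families (indexed by the base $W$) of pseudodifferential operators on $\R^k$; it is the integration over the fibres of $p$ --- strictly transverse to both $V_1$ and $V_2$ --- that removes the off--diagonal conormal singularities and produces a classical symbol on $N^*V'$. Your final symbol identification is nonetheless correct: $p^*\zeta^*=(\zeta^*,\zeta^*)$, and the restriction $\sigma_1(v,\zeta^*)\sigma_2(v,\zeta^*)$ of the product amplitude to this diagonal \emph{is} a genuine homogeneous symbol of degree $m_1+m_2$, equal to $p_!(\sigma_1)\cdot p_!(\sigma_2)$. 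So the repair is localized: replace the intermediate claim and the appeal to Proposition \ref{push-forward} by the direct composition computation, keeping everything else.
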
 

\begin{proof}
\begin{enumerate}
\item The statement is local. We may therefore assume $U=V\times \R^k\times \R^\ell$, $V_1=V\times \R^k\times\{0\}$ and $V_2=V\times \{0\}\times \R^\ell$. Also, by an obvious choice of the tubular neighborhood construction, we may write $$P_1(x,u,v)=\int e^{i\langle v|\eta\rangle} a_{1}(x,u,\eta)d\eta \ \ \ \hbox{and}\ \ \ P_2(x,u,v)=\int e^{i\langle u|\xi\rangle}a_{2}(x,\xi,v)d\xi.$$ Here, $a_{1}$ and $a_{2}$ are classical (polyhomogeneous) symbols.

The product is then given by a formula:  $$\langle P_1\cdot P_2,f\rangle =\int\left(\int e^{i\langle u|\xi\rangle}e^{i\langle v|\eta\rangle}a_{1}(x,u,\eta)a_{2}(x,\xi,v)f(x,u,v)\,dudv\right)dxd\xi d\eta $$ which makes perfect sense when $f\in C_c^\infty (U)$.

\item Due to \ref{linearization} we can assume $U = V \times \R^{k} \times \R^{k}$, $U' = V \times \R^{k}$ and $p(x,u,v) = (x,u+v)$. 

We thus have to compute $$\bigg(\int\left(\int e^{i\langle u|\xi\rangle}e^{i\langle v|\eta\rangle}\chi (x,u,v) a_{1}(x,u,\eta)a_{2}(x,\xi,v)f(x,u+v)\,dudv\right)d\xi d\eta\bigg)dx .$$
This is a ``classical'' oscillatory integral on $\R^{4k}$ which is treated by the usual integration by parts methods. It actually amounts to composing (families indexed by $V$) of pseudodifferential operators in $\R^{k}$. 
\qedhere
\end{enumerate}
\end{proof}

\section{Singular foliations; cotangent space}

\subsection{Foliations, bi-submersions, atlas, *-algebra}

 Let us first recall some definitions, notation and  conventions taken in \cite{AndrSk}.

\subsubsection{Foliations}

\begin{definition}\begin{enumerate}
\renewcommand{\theenumi}{\alph{enumi}}
\renewcommand{\labelenumi}{\theenumi)}

\item Let $M$ be a smooth manifold. A \emph{foliation} on $M$ is a
locally finitely generated submodule of $C_c^\infty(M;TM)$ stable
under Lie brackets.

\item There is an obvious notion of a pull-back foliation: if
$(M,\cF)$ is a foliation and $f:L\times M\to M$ is the second
projection, the pull back foliation $f^{-1}(\cF)$ the space of
vector fields whose $M$ component as a map from $L$ to
$C_c^\infty(M,TM)$ takes its values in $\cF$. In the same way, one
defines pull back foliations by submersions. See \cite{AndrSk},
subsection 1.2.3. 

\item For $x\in M$, put $I_{x} = \{ f \in C^{\infty}(M) : f(x)=0 \}$. The \textit{fiber} of $\cF$ is the quotient $\cF_{x} = \cF/I_{x}\cF$. The \textit{tangent space of the leaf} is the image $F_{x}$ of the evaluation map $ev_{x} : \cF \to T_{x}M$. 

The spaces $F_{x}$ and $\cF_{x}$ differ on singular leaves: the dimension of $F_{x}$ is lower semi-continuous and the dimension of $\cF_{x}$ is upper semi-continuous. They coincide in a dense open subset of $M$, namely on points $x$ that lie in a regular leaf.

We get a surjective linear map $e_{x} : \cF_{x} \to F_{x}$ whose kernel is a Lie algebra $\gog_{x}$.
\end{enumerate}
\end{definition}

\subsubsection{Bi-submersions, bisections}
The main ingredient in the constructions of \cite{AndrSk} is the notion of bi-submersion that we now recall.

\begin{definition}
A \emph{bi-submersion} of $(M,\cF)$ is a
smooth manifold $U$ endowed with two smooth maps $s,t:U\to M$  which are submersions and
satisfy:\begin{enumerate}\renewcommand{\theenumii}{\roman{enumii}}
\renewcommand{\labelenumii}{(\theenumii)}
\item $s^{-1}(\cF)=t^{-1}(\cF)$.
\item $s^{-1}(\cF)=C_c^\infty(U;\ker ds)+C_c^\infty (U;\ker dt)$.
\end{enumerate}
If $(U,t_{U},s_{U})$ and $(V,t_{V},s_{V})$ are bi-submersions then $(U,s_{U},t_{U})$ is a bi-submersion, as well as $(W,s_{W},t_{W})$ where $W = U \times_{s_{U},t_{V}}V$, $s_{W}(u,v) = s_{V}(v)$ and $t_{W}(u,v) = t_{U}(u)$ (\cite[Prop. 2.4]{AndrSk}). 

The bi-submersion $(U,s_{U},t_{U})$ is called the inverse of $(U,t_U,s_U)$ and is noted $(U,t_U,s_U)^{-1}$, or just $U^{-1}$; the bi-submersion $(W,s_{W},t_{W})$ is called the composition of $(U,t_{U},s_{U})$ and $(V,t_{V},s_{V})$ and is noted $(U,t_{U},s_{U})\circ (V,t_{V},s_{V})$ - or just $U\circ V$.
\end{definition} 

In \cite[Prop. 2.10]{AndrSk} it is shown that there are enough bi-submersions $\{(U_{i},t_{i},s_{i})\}_{i \in I}$ such that $\bigcup_{i \in I} s_{i}(U_{i}) = M$: For an $x \in M$, if $X_{1},\ldots,X_{n} \in \cF$ form a base of $\cF_{x}$ then we can find a neighborhood $U$ of $(x,0)$ in $M \times \R^{n}$ where the exponential $t_{U}(y,\xi) = \exp(\sum_{i = 1}^{n}\xi_{i}X_{i})(y)$ is defined and such that $(U,t_{U},s_{U})$ is a bi-submersion, where $s_{U}$ denotes the first projection. Such a bi-submersion is sometimes called an \textit{identity bi-submersion}.

\begin{definition}[morphisms of bi-submersions]  Let $(U_i,t_i,s_i)$ ($i=1,2$) be bi-submersions. A smooth map $f:U_1\to U_2$ is a \emph{morphism of bi-submersions} if $s_1=s_2\circ f$ and $t_1=t_2\circ f$.
\end{definition}

In order to compare bi-submersions, we used the notion of bisections:

\begin{definition}
A \textit{bisection} of $(U,t,s)$ is a locally closed submanifold $V$ of $U$ such that $s$ and $t$ restricted to $V$ are diffeomorphisms to open subsets of $M$.  We say that $V$ is an \emph{identity bisection} if moreover the restrictions of $s$ and $t$ to $V$ coincide. 

We say that $u \in U$ \textit{carries} the foliation-preserving local diffeomorphism $\phi$ if there exists a bisection $V$ such that $u\in V$ and $\phi = t\mid_{V} \circ (s\mid_{V})^{-1}$.
\end{definition}

In \cite[\S 2.3]{AndrSk} it is shown that if $(U_{j},t_{j},s_{j})$ are bi-submersions, $j = 1,2$ then a $u_{1} \in U_{1}$ carries the same diffeomorphism with $u_{2} \in U_{2}$ iff there exists a morphism of bi-submersions $g$ defined in an open neighborhood of $u_{1} \in U_{1}$ such that $g(u_{1}) = u_{2}$.

Actually the proof given in \cite[\S 2.3]{AndrSk} proves a stronger statement which will be useful here:

\begin{proposition} \label{identbisect}
Let $(U_{j},t_{j},s_{j})$ be bi-submersions, $j = 1,2$, $V_i\subset U_i$ identity bisections and $u_{j} \in V_{j}$ such that $s_1(u_1)=s_2(u_2)$. Then there exists a morphism of bi-submersions $g$ defined in an open neighborhood $U'_1$ of $u_{1} \in U_{1}$ such that $g(u_{1}) = u_{2}$ and $g(V_1\cap U'_1)\subset V_2$. \hfill$\square$
\end{proposition}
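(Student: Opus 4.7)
The plan is to observe that the statement is essentially a refinement of the morphism-existence result already established in \cite[\S 2.3]{AndrSk}, and that what is needed is to trace through the construction there while choosing ``carrying bisections'' that coincide with the given identity bisections $V_1$ and $V_2$.

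First I would record the elementary fact that every point $v$ of an identity bisection $V_j \subset U_j$ carries the identity local diffeomorphism at $s_j(v)=t_j(v)$: indeed, $V_j$ itself serves as the witnessing bisection since $s_j|_{V_j}=t_j|_{V_j}$. In particular both $u_1$ and $u_2$ carry the identity diffeomorphism near $x:=s_1(u_1)=s_2(u_2)$, so the hypothesis of the basic result of \cite[\S 2.3]{AndrSk} is satisfied and supplies a morphism of bi-submersions $g\colon U_1'\to U_2$ with $g(u_1)=u_2$.

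Next I would revisit the construction of this morphism. In the proof of \cite[\S 2.3]{AndrSk}, one first chooses bisections $V_1^\circ\ni u_1$ in $U_1$ and $V_2^\circ\ni u_2$ in $U_2$ both carrying the common diffeomorphism $\phi$, and defines $g$ on $V_1^\circ$ by $g(v)=(s_2|_{V_2^\circ})^{-1}(s_1(v))\in V_2^\circ$; one then extends $g$ transversally to $V_1^\circ$ by using vector fields in $\ker ds_1$ and $\ker ds_2$ to parametrize a product neighborhood and matching coordinates. The key remark is that the bisections $V_1^\circ, V_2^\circ$ carrying $\phi$ can be chosen freely: in our situation, where $\phi=\mathrm{id}$, we are free to take $V_1^\circ=V_1$ and $V_2^\circ=V_2$, since both are identity bisections and hence carry the identity. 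With this choice, the very definition on the carrying bisection gives $g(V_1\cap U_1')\subset V_2$, and the transversal extension does not affect this inclusion.

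The only step that requires care is verifying that the transversal extension in \cite[\S 2.3]{AndrSk} is insensitive to the particular choice of carrying bisections, so that picking $V_1$ and $V_2$ as above still yields a \emph{morphism} of bi-submersions (not merely a smooth map intertwining $s$ and $t$ at the level of $V_1$). This is the main technical point, but it is precisely what \cite[\S 2.3]{AndrSk} already establishes for any admissible pair of carrying bisections, so no additional work is needed beyond this observation — after possibly shrinking $U_1'$ to ensure $g(V_1\cap U_1')$ lies in the domain of the chosen chart of $V_2$.
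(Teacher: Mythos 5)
Your proposal is correct and takes essentially the same route as the paper: the paper offers no independent argument for Proposition~\ref{identbisect}, but merely observes that the construction of morphisms of bi-submersions in \cite[\S 2.3]{AndrSk} already yields this stronger statement when the carrying bisections are chosen to be the given identity bisections $V_1$ and $V_2$ --- which is precisely your observation. Your extra tracing of that construction (defining $g$ on the bisection by $(s_2|_{V_2})^{-1}\circ s_1$, where the identity-bisection hypothesis guarantees compatibility with both $s$ and $t$, and then extending transversally) correctly identifies why the inclusion $g(V_1\cap U_1')\subset V_2$ comes for free.
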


\subsubsection{Minimal bi-submersions}
\begin{definition}
 If $(U,t,s)$ is a bi-submersion and $u\in U$, then the dimension of the manifold $U$ is at least $\dim M+\dim \cF_{s(u)}$. We say that $(U,t,s)$ is \emph{minimal} at $u$ if $\dim U=\dim M+\dim \cF_{s(u)}$. 

If $f:(U',t',s')\to (U,t,s)$ is a morphism of bi-submersions and $U$ is minimal at $f(u')$, then $df_{u'}$ is onto. Therefore, there is a neighborhood $W'$ of $u'$ in $U'$ such that the restriction of $f$ to $W'$ is a submersion.

For every bi-submersion $(U,t,s)$ and every $u\in U$, there exists a bi-submersion $(U',t',s')$, and $u'\in U'$ such that $U'$ is minimal at $u'$ and carries at $u'$ the same diffeomorphisms as $U$ at $u$. It follows that there is a neighborhood $W$ of $u$ in $U$ and a submersion which is a morphism $f:(W,t_{|W},s_{|W})\to (U',t',s')$.
\end{definition}

The following result will be used in the sequel:

\begin{proposition}\label{transverse}
Let $(U,t,s)$ be a bi-submersion and $V$ an identity bisection. Let $u \in V$ and assume $U$ is minimal at $u$. Then there is a neighborhood $U'$ of $u$ in $U$ and a submersion $p : U' \circ U' \to U$ which is a morphism of bi-submersions which is strictly transverse to $U \circ V$ and to $V \circ U$.
\end{proposition}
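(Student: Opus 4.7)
The plan is to construct $p$ as a morphism of bi-submersions using the machinery of \cite{AndrSk} and then exploit minimality at $u$ to control $dp$ on the tangent spaces of $V\circ U$ and $U\circ V$, first at $(u,u)$ and then extending by openness.

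First, since $V$ is an identity bisection, $u\in V$ carries the germ of the identity local diffeomorphism of $M$ at $s(u)$. Hence $(u,u)\in U\circ U$ carries identity $\circ$ identity $=$ identity, the same germ as $u\in U$ itself. By the characterization of morphisms of bi-submersions in terms of carried diffeomorphisms (recalled before Proposition \ref{identbisect}), there exists a morphism of bi-submersions $p\colon W\to U$ defined on a neighborhood $W$ of $(u,u)$ in $U\circ U$ with $p(u,u)=u$. Since $U$ is minimal at $u$, the very definition of minimality forces $dp_{(u,u)}$ to be surjective, so $p$ is a submersion near $(u,u)$. Choose $U'\subset U$ a small enough open neighborhood of $u$ so that $U'\circ U'\subset W$ and $p\colon U'\circ U'\to U$ is a submersion.

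Next, I claim that $p|_{V\circ U'}\colon V\circ U'\to U$ is a local diffeomorphism at $(u,u)$, and similarly for $p|_{U'\circ V}$. Indeed, $V\circ U'$ is itself a bi-submersion (composition of two bi-submersions) and $p|_{V\circ U'}$ is a morphism of bi-submersions, since the identities $s\circ p=s_{U'\circ U'}$ and $t\circ p=t_{U'\circ U'}$ pass to the restriction. Writing $n=\dim M$ and $k=\dim\cF_{s(u)}$, minimality of $U$ at $u$ says $\dim U=n+k$, while $\dim(V\circ U')=\dim V+\dim U-\dim M=n+k$ as well. Because $U$ is minimal at $p(u,u)=u$, the differential of $p|_{V\circ U'}$ at $(u,u)$ is surjective; being a surjection between equidimensional spaces, it is an isomorphism. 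The same argument applies to $p|_{U'\circ V}$.

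Strict transversality at $(u,u)$ is now dimension arithmetic: $\dim(U'\circ U')=n+2k$ and $\dim\ker dp_{(u,u)}=(n+2k)-(n+k)=k$, while $\dim T_{(u,u)}(V\circ U')=n+k$. Injectivity of $dp$ on $T_{(u,u)}(V\circ U')$ gives $T_{(u,u)}(V\circ U')\cap\ker dp_{(u,u)}=0$, and the dimensions add up to $n+2k$, yielding $T_{(u,u)}(V\circ U')\oplus\ker dp_{(u,u)}=T_{(u,u)}(U'\circ U')$. The symmetric argument works for $U'\circ V$. Finally, being a submersion and being a local diffeomorphism are open conditions on a smooth map, so after further shrinking $U'$ around $u$, the restrictions $p|_{V\circ U'}$ and $p|_{U'\circ V}$ are local diffeomorphisms on all of their domain and $p$ itself remains a submersion, so strict transversality holds at every point of $V\circ U'$ and $U'\circ V$. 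The only delicate step is the initial construction of $p$ on a full neighborhood of the form $U'\circ U'$, but this is a standard shrinking argument once the germ of $p$ at $(u,u)$ is in hand.
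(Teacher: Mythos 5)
Your proof is correct and follows essentially the same route as the paper's: obtain a morphism $p$ on a neighborhood of $(u,u)$ from the fact that $(u,u)$ carries the identity, use minimality of $U$ at $u$ to make $p$ and its restrictions to $V\circ U'$ and $U'\circ V$ submersions, and conclude strict transversality by equality of dimensions. You merely spell out the dimension count that the paper leaves implicit.
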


\begin{proof}
The composition $U\circ U$ carries at $(u,u)$ the identity bisection $V\circ V$. It follows that there exists a neighborhood of $(u,u)$ in $U\circ U$ thet we may assume of the form $U'\circ U'$ and a morphism $p:U'\circ U'\to U$. By minimality of $U$ at $u$, we may assume that $p$ is a submersion. Moreover $U\circ V$ and $V\circ U$ are bi-submersions. Again, by minimality of $U$ at $u$, it follows that, up to restricting $U'$, the restrictions of $p$ to $(U\circ V)\cap (U'\circ U')$ and $(V\circ U)\cap (U'\circ U')$ are submersions, hence $p$ is transverse to  $U \circ V$ and to $V \circ U$ - strictly by equality of dimensions.
\end{proof}

\subsubsection{Atlas of bi-submersions}

\begin{definition}
 Let $\cU = \big((U_{i},t_{i},s_{i})\big)_{i \in I}$ be a family of bi-submersions. A bi-submersion $(U,t,s)$ is \emph{adapted} to $\cU$ if for all $u \in U$ there exists an open subset $U' \subset U$ containing $u$, an $i \in I$, and a morphism of bi-submersions $U' \to U_{i}$. 

We say that $\cU$ is an \textit{atlas} if
\begin{enumerate}
\item $\bigcup_{i \in I}s_i(U_{i}) = M$.
\item The inverse of every element in $\cU$ is adapted to $\cU$.
\item The composition $U \circ V$ of any two elements in $\cU$ is adapted to $\cU$.
\end{enumerate}
An atlas $\cV = \{(V_{j},t_{j},s_{j})\}_{j \in J}$ is adapted to $\cU$ if every element of $\cV$ is adapted to $\cU$. We say $\cU$ and $\cV$ are \textit{equivalent} if they are adapted to each other. There is a \textit{minimal atlas} which is adapted to any other atlas: this is the atlas generated by ``identity bi-submersions''. 
\end{definition}

\subsubsection{The groupoid of an atlas}

The groupoid of an atlas $\cU = \big((U_{i},t_{i},s_{i})\big)_{i \in I}$ is the quotient of $U=\coprod_{i\in I}U_i$ by the equivalence relation for which $u\in U_i$ is equivalent to $u'\in U_j$ if $U_i$ carries at $u$ the same local diffeomorphisms as $U_j$ at $u'$.

For every bi-submersion $U$ adapted to $\cU$ we have a well defined (quotient) map $\zeta_U :U\to \cG_\cU$.

\subsubsection{The C*-algebra of a foliation}

In \cite[\S 4.3]{AndrSk} we associated to an atlas $\cU$ a $*$-algebra $\cA(\cU) = \bigoplus_{i \in I}C^\infty_{c}(U_{i};\Omega^{1/2}U_{i})/\cI$. Here $\Omega^{1/2}$ denotes the bundle of half densities on $\ker ds\oplus \ker dt$ and $\cI$ is the space spanned by the $p_{!}(f)$, where $p : W \to U$ is a submersion which is a morphism of bi-submersions and $f \in C_{c}(W;\Omega^{1/2}W)$ is such that there exists a morphism $q : W \to V$ of bi-submersions which is a submersion and such that $q_{!}(f) = 0$. 

To any bi-submersion $V$ adapted to $\cU$ we can associate a linear map $Q_{V} : C^\infty_{c}(V;\Omega^{1/2}V) \to \cA_{\cU}$. Involution and convolution in $\cA_{\cU}$ are then defined by $$\Big(Q_{V_{1}}(f_{1})\Big)^*= (Q_{V_{1}^{-1}})(f_{1}^*) \ \ \hbox{and}\ \
Q_{V_{1}}(f_{1})Q_{V_{2}}(f_{2})=Q_{V_{1}\circ V_{2}}(f_{1}\otimes f_{2})$$ for $(V_{i},t_{i},s_{i})$ bi-submersions adapted to $\cU$ and $f_{i} \in C_{c}^{\infty}(V_{i};\Omega^{1/2}V_{i})$, $i = 1,2$.

\medskip The $C^*$-algebra $C^{*}(\cU)$ of the atlas $\cU$ is the (Hausdorff)-completion of $\cA_{\cU}$ with a natural $C^*$-norm \cite[\S4.4, \S4.5]{AndrSk}. Actually, two natural completions were considered: the full and the reduced one. 

If $\cU$ is adapted to $\cV$, we have natural $*$-morphisms $\cA(\cU)\to \cA(\cV)$ and $C^*(\cU)\to C^*(\cV)$.

When $\cU$ is the mininal atlas we write $C^{*}(M,\cF)$ for the full and $C^{*}_{r}(M,\cF)$ for the reduced completion.

The representations of the full $C^*$-algebras were described in \cite[\S5]{AndrSk} in terms of representations of the associated groupoid. We will come back to this description below (\S\ref{repsPseudo}).

\subsection {The cotangent space}

The symbols should be functions on the total space of a ``cotangent bundle''. Let us discuss this space.

\begin{definition}
The {\em cotangent bundle} of the foliation $\cF$ (although it is in general not a bundle) is the union  $\cF=\coprod_{x\in M} \cF^*_x$. It is endowed with a natural projection $p:\cF^*\to M$ ($(x,\xi)\mapsto x$). Also, for each $X\in \cF$, we have a natural map $q_X:(x,\xi)\mapsto \xi\circ e_x(X)$. We endow $\cF^*$ with the weakest topology for which the maps $p$ and $q_X$ are continuous.
\end{definition}

\begin{proposition}\label{fiberdual}
The space $\cF^{*}$ is locally compact. 
\end{proposition}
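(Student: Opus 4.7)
My strategy is to realize, locally around any point of $\cF^*$, a topological embedding into a Euclidean bundle whose image is closed; local compactness is then inherited.

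Fix $x_0 \in M$ and set $n = \dim \cF_{x_0}$. Because $\cF$ is locally finitely generated and $x \mapsto \dim \cF_x$ is upper semi-continuous, a standard Nakayama-type argument produces sections $X_1, \dots, X_n \in \cF$, defined on an open neighborhood $U$ of $x_0$, which generate $\cF$ on $U$ and whose classes in $\cF_{x_0}$ form a basis; hence their classes span $\cF_x$ for every $x \in U$. I then define
\[
\Phi \colon p^{-1}(U) \longrightarrow U \times \R^n, \qquad \Phi(x,\xi) = \bigl(x, q_{X_1}(x,\xi), \dots, q_{X_n}(x,\xi)\bigr).
\]
By the definition of the topology on $\cF^*$, the map $\Phi$ is continuous; fibrewise it is injective because an element of $\cF^*_x$ is determined by its values on a spanning set of $\cF_x$. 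Writing any $X \in \cF$ as $X = \sum_i g_i X_i$ on $U$ gives $q_X = \sum_i (g_i \circ p)\, q_{X_i}$, so the topology on $p^{-1}(U)$ coincides with the initial topology generated by $p$ and the $q_{X_i}$; therefore $\Phi$ is a topological embedding.

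The key step is to show $\Phi(p^{-1}(U))$ is closed. Let $R \subset C^\infty(U)^n$ be the relation module $R = \{(g_1,\dots,g_n) : \sum_i g_i X_i = 0 \text{ in } \cF|_U\}$. Unwinding $\cF_x = \cF / I_x \cF$ and using that the $X_i$ generate $\cF$ on $U$, one checks that the kernel of the surjection $\R^n \to \cF_x$, $c \mapsto \sum c_i X_i \bmod I_x \cF$, is precisely $R(x) := \{(g_1(x),\dots,g_n(x)) : (g_i) \in R\}$. Dualising, $\Phi(\cF^*_x)$ is the annihilator of $R(x)$ in $\R^n$, and globally
\[
\Phi(p^{-1}(U)) = \bigcap_{(g_i) \in R} \bigl\{(x,a) \in U \times \R^n : \textstyle\sum_i g_i(x) a_i = 0\bigr\},
\]
an intersection of closed sets.

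Therefore $p^{-1}(U)$ embeds as a closed subspace of the locally compact Hausdorff space $U \times \R^n$ and is itself locally compact Hausdorff; since $x_0$ was arbitrary, this proves the proposition. The main obstacle lies in the closedness computation: one must trade each pointwise relation $\sum c_i X_i \in I_x \cF$ for a genuine global relation $(g_i) \in R$ with prescribed values $c_i$ at $x$. This is done by expanding the elements of $I_x \cF$ using the generators $X_i$ and gathering the resulting coefficients, an argument that relies essentially on the local generation property of the $X_i$ on $U$.
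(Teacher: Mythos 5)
Your proof is correct and follows essentially the same route as the paper: both reduce to a neighborhood where $\cF$ is generated by $X_1,\dots,X_n$, realize $p^{-1}(U)$ inside $U\times(\R^n)^*$ via evaluation against the generators, and identify the image as the (closed) annihilator of the relations among the $X_i$, i.e.\ of the kernel of $C_c^\infty(U;\R^n)\to\cF$. Your write-up merely makes explicit the Nakayama choice of generators, the verification that $\Phi$ is an embedding, and the pointwise-to-global relation step, all of which the paper leaves implicit.
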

\begin{proof} It is enough to show that $p^{-1}(U)$ is locally compact for every small enough open set $U$ of $M$. We may therefore assume that $\cF$ is finitely generated  \ie it is a quotient of $C_c^\infty(M;\R^n)$. Then $\cF^*$ consists of elements of $(x,y)\in M\times (\R^n)^*$ such that the map $C_c^\infty(M;\R^n)\ni \varphi\mapsto \langle y,\varphi(x)\rangle$ factors through the quotient $\cF$ of $C_c^\infty(M;\R^n)$ (\ie vanishes on the kernel of the map $C_c^\infty(M;\R^n)\to \cF$). It is a closed subset of $M\times (\R^n)^*$ and is therefore locally compact.
\end{proof}

\begin{example} \label{ex:SO3}
Let $\cF$ be the foliation on $\R^{3}$ defined by the (infinitesimal) action of $SO(3)$. It is easy to see that $\cF^{*} = \cup_{\xi \in \R^{3}}\{ x \in \R^{3} : \langle x|\xi \rangle = 0 \}$.§
\end{example}

Let $(U,t,s)$ be a bi-submersion and $V$ an identity bisection. Identifying the normal bundle $NV$ with $\ker ds$ (or $\ker dt$), there are epimorphisms $d_{x}t : N_{x}V \to \cF_{x}$, $x \in V$ (or $d_{x}s$). Dualizing these maps we find that locally the cotangent bundle $\cF^{*}$ is a closed subspace of $N^{*}V$. Thus we can restrict symbols to $\cF^{*}$. 

Let $(U',t',s')$ and $(U,t,s)$ be bi-submersions with identity bisections $V'$ and $V$. Let $p : U' \to U$ is a smooth map that is a morphism of bi-submersions such that $p(V')\subseteq V$. Then for every $x \in s'(V')$ the inclusion $\cF_{x}^{*} \to (N'_{x})^{*}$ factors as the composition of $p_{x}^{*} : N_{x}^{*} \to (N'_{x})^{*}$ with the inclusion $\cF_{x}^{*} \to N_{x}^{*}$.

\section{The space of pseudodifferential kernels on a foliation}

\subsection{Pseudodifferential kernels on a bi-submersion}

Let $(U,t,s)$ be a bi-submersion. The bundle (over $U$) of half densities on $\ker ds\oplus \ker dt$  will be simply denoted by $\Omega^{1/2}$.

We define $\cP^m(U,V;\Omega^{1/2})$ (\resp $\cP_c^m(U,V;\Omega^{1/2})$) to be the space of generalized sections of the bundle $\Omega^{1/2}$ with pseudodifferential singularities along $V$  (\resp those with compact support) of order $\le m$. We drop the superscript $m$ when we make no order requirements.

Let $N$ be the normal bundle of the inclusion $V\to U$. Note that $N$ canonically identifies with both the restrictions to $V$ of the bundles $\ker ds$ and $\ker dt$. Under these identifications the bundle $\Omega^{1}N^*\otimes \Omega^{1/2}(\ker ds)\otimes \Omega^{1/2}(\ker dt)$ is trivial.

The principal symbol of $P\in \cP^m(U,V;\Omega^{1/2})$  is therefore an element of $S^{m}(V,N^*)$, \ie a function on $N^*$ which is homogeneous of degree $m$.

\begin{definition}\label{defi3.1}
Let $(U,t,s)$ be a bi-submersion and $V$ an identity bisection. 
\begin{enumerate}
\item Denote by $U^{-1}$ the inverse bi-submersion and $\kappa : U \to U^{-1}$ the (identity) isomorphism. For an operator $P \in \cP_c(U,V;\Omega^{1/2})$ define $P^{*} = \overline{\kappa_{!}P}$.

\item Finally, let $U,U'$ be bi-submersions, $V\subset U$ an identity bisection, $P\in \cP_c(U,V;\Omega^{1/2})$ and $f\in C_c^\infty(U';\Omega^{1/2})$. Using the first projection which is a submersion $U\circ U'\to U$ we may pull back $P$ to a generalized function with pseudodifferential singularities on $V\circ U'$; multiplying it by $f$, we get $P\star f\in \cP_c(U\circ U',V\circ U';\Omega^{1/2})$. In the same way, we construct $f\star P\in \cP_c(U'\circ U,U'\circ V;\Omega^{1/2})$. 
\end{enumerate}
\end{definition}

\subsection{Extending representations to pseudodifferential kernels}\label{repsPseudo}

We fix an atlas $\cU$. Denote by $\cG_\cU$ the associated groupoid. 

\subsubsection{Quasi-invariant measures}

Recall that a measure $\mu$ on $M$ is said to be quasi-invariant by the atlas $\cU$ if for every bi-submersion $(U,t,s)$ adapted to the atlas $\cU$ the measures $\mu\circ \lambda^s$ and $\mu \circ \lambda^t$ are equivalent (here $\lambda^s$ and $\lambda^t$ are Lebesgue measures along the fibers of $s$ and $t$ respectively). 

In that case, there is a measurable almost everywhere invertible section $\rho^U$ of $\Omega^{-1/2}(\ker ds)\otimes \Omega^{1/2}(\ker dt)$ such that for every $f\in C_c(U;\Omega^{1/2}(\ker ds)\otimes \Omega^{1/2}(\ker dt))$ we have $$\int_M\Big(\int_{s^{-1}(x)} (\rho^U_u)^{-1}\cdot f(u)\Big)d\mu (x)=\int_M\Big(\int_{t^{-1}(x)} \rho^U_u\cdot  f(u)\Big)d\mu (x).$$

If $p:(U,t,s)\to (U',t',s')$ is a morphism of bi-submersions, there is a canonical isomorphism of the bundles $\Omega^{-1/2}(\ker ds)\otimes \Omega^{1/2}(\ker dt)$ and $p^*\Big(\Omega^{-1/2}(\ker ds')\otimes \Omega^{1/2}(\ker dt')\Big)$. Under this isomorphism, the maps $\rho^U$ and $p^*(\rho^{U'})$ coincide (almost everywhere). We interpret this by saying that $\rho$ is defined on the groupoid $\cG_\cU$. Furthermore, one may see that it is naturally a homomorphism.

\begin{remark}
The morphism $\rho $ defined here takes also into account the Radon Nykodym derivative $(D^U)^{1/2}$ that we used in \cite[\S 5]{AndrSk}.
\end{remark}

\begin{remark}[on measurable functions on $\cG_\cU$]
A function $f$ on $\cG_\cU$ is just a function $f\circ \zeta$ on $\coprod_{i\in I} U_i$ which is constant on the equivalence classes. We will say that the function $f$ is measurable with respect to the quasi-invariant measure $\mu$ if $f\circ \zeta_U$ is measurable (with respect to the measures $\mu\circ \lambda$).
\end{remark}

\begin{lemma} \label{lemma3.4}
Let $(W,t,s)$ and $(W_i,t_i,s_i)$ be bi-submersions, $Y\subset W$ a submanifold and $p^i:W\to W_i$ be morphisms of bi-submersions which are submersions transverse to $Y$ (here $i\in \{1,2\}$). Let $\mu $ be a quasi-invariant measure on $M$ and $\beta $ a measurable bounded function on $\cG_\cU$. 

For $Q\in \cP_c(W,Y;\Omega^{1/2})$, we have $$\int_M\bigg(\int_{t_1^{-1}\{x\}}\rho^{W_1}_w\cdot p^1_!(Q)(w)\beta\circ \zeta_{W_1}(w)\bigg)d\mu(x)=\int_M\bigg(\int_{t_2^{-1}\{x\}}\rho^{W_2}_w\cdot p^2_!(Q)(w)\beta\circ \zeta_{W_2}(w)\bigg)d\mu(x)$$
\end{lemma}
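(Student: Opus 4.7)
The plan is to reduce both sides to a single integral over $W$, exploiting the fact that $p^{1},p^{2}$ are morphisms of bi-submersions and that both $\rho$ and $\beta$ descend to the groupoid $\cG_{\cU}$.

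First I would record three compatibilities for each $i\in\{1,2\}$: (i) $t_i\circ p^i=t$, so $p^i$ restricts to a submersion $t^{-1}\{x\}\to t_i^{-1}\{x\}$ for every $x\in M$; (ii) $\zeta_{W_i}\circ p^i=\zeta_W$ by definition of the groupoid, so $\beta\circ\zeta_{W_i}\circ p^i=\beta\circ\zeta_W$ on $W$; and (iii) since $\rho$ is well-defined on $\cG_{\cU}$, one has $(p^i)^{*}\rho^{W_i}=\rho^W$ almost everywhere under the canonical bundle identification. Each of these is immediate from the relevant definitions.

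For a smooth test input $Q\in C_c^\infty(W;\Omega^{1/2})$, the pushforward $p^i_!(Q)$ is obtained by ordinary fiber integration, and I would apply Fubini along the tower $W\stackrel{p^i}{\to} W_i\stackrel{t_i}{\to} M$, using (i) to identify fibers and (ii)--(iii) to convert the pulled-back integrand on $W$ into $\rho^W\,Q\,\beta\circ\zeta_W$. Both sides then equal
$$\int_M\bigg(\int_{t^{-1}\{x\}}\rho^W_w\, Q(w)\,\beta(\zeta_W(w))\bigg)d\mu(x),$$
which is manifestly independent of $i$, settling the smooth case.

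To pass to a general $Q\in\cP_c(W,Y;\Omega^{1/2})$, I would invoke the approximation of \S\ref{density}: there is a sequence $Q_n\in C_c^\infty(W;\Omega^{1/2})$ with a common compact support, such that $Q_n\to Q$ in $C^{-\infty}$ and such that for every submersion $q$ transverse to $Y$ the pushforwards $q_!(f\,Q_n)$ converge to $q_!(f\,Q)$ in $C_c^\infty$. Applied with $q=t$ (and the appropriate auxiliary smooth weight encoding the densities), this makes the inner fiber integrals in the smooth identity above converge uniformly on compacts of $M$. Dominated convergence, valid because $\beta$ is bounded measurable and the supports are trapped in a fixed compact set, then transfers the equality from $Q_n$ to $Q$.

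The main obstacle is making precise in what sense the inner fiber integral over $t_i^{-1}\{x\}$ produces a well-defined, locally integrable function of $x\in M$ that can be paired against the merely bounded measurable $\beta$. This rests on $t_i$ being transverse to the singularity $p^i(Y)$ (equivalently, on $t$ being transverse to $Y$, which follows from the usual choice of $Y$ as an identity bisection in the intended applications); once the corresponding pushforward is known to be a smooth density on $M$, the approximation argument and the compatibilities (i)--(iii) finish the proof without further surprises.
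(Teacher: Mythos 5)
Your proposal is correct and follows essentially the same route as the paper: for smooth $Q$ both sides collapse by Fubini to the single integral $\int_M\big(\int_{t^{-1}\{x\}}\rho^{W}_u\, Q(u)\,\beta\circ\zeta_W(u)\big)d\mu(x)$, and the general case follows from the density and convergence statements of \S\ref{density}. Your worry in the last paragraph is already resolved by the hypothesis that the $p^i$ are submersions transverse to $Y$, which makes each $p^i_!(Q)$ a smooth compactly supported function on $W_i$, so the pairing with the bounded measurable $\beta$ is unproblematic.
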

Note that, by the transversality assumption, the $p^i_!(Q)$ are smooth functions. This explains the meaning of this formula. 

\begin{proof}
For $Q\in C_c^\infty (W;\Omega^{1/2})$ these two expressions coincide (by Fubini) with $$\int_M\bigg(\int_{t^{-1}\{x\}}\rho^{W}_u\cdot Q(u)\beta\circ\zeta_U(u)\bigg)d\mu(x).$$
The Lemma follows from \S \ref{density}.
\end{proof}

We will use an immediate generalization of this lemma:

\begin{lemma} \label{lemma3.5}
Let $(W,t,s)$ and $(W_i,t_i,s_i)_{i\in I}$ be bi-submersions and $Y\subset W$ a closed submanifold. Assume that there is an open cover $(Z_i)_{i\in I}$ of $W$ and $p^i:Z_i\to W_i$  morphisms of bi-submersions which are submersions transverse to $Y\cap Z_i$. Let $(\chi_i)$ be a smooth partition of the identity adapted to the cover $Z_i$. Let $\mu $ be a quasi-invariant measure on $M$ and $\beta $ a measurable bounded function on $\cG_\cU$. 

For $Q\in \cP_c(W,Y;\Omega^{1/2})$, the quantity
 $$\int_M\sum_{i\in I}\bigg(\int_{t_i^{-1}\{x\}}\rho^{W_1}_w\cdot p^i_!(\chi_iQ)(w)\beta\circ \zeta_{W_i}(w)\bigg)d\mu(x))$$ does not depend on the choices of $I$ and  the family $(Z_i,\chi_i,W_i,p_i)_{i\in I}$ with the above requirements. \hfill$\square$
\end{lemma}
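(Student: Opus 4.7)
The strategy is to reduce the general statement to Lemma~\ref{lemma3.4} via a common refinement of two competing choices. Given two admissible data $(I,Z_i,\chi_i,W_i,p^i)_{i\in I}$ and $(J,Z'_j,\chi'_j,W'_j,p'^j)_{j\in J}$, I form the refined cover indexed by $I\times J$ by setting $Z''_{ij}=Z_i\cap Z'_j$, $\chi''_{ij}=\chi_i\chi'_j$, which is still a smooth partition of the identity on $W$ since $\sum_j\chi'_j=1$ and $\sum_i\chi_i=1$. Each $Z''_{ij}$ is an open subset of $W$, hence inherits the structure of a bi-submersion, and the restrictions $p^i|_{Z''_{ij}}:Z''_{ij}\to W_i$ and $p'^j|_{Z''_{ij}}:Z''_{ij}\to W'_j$ remain morphisms of bi-submersions that are submersions transverse to $Y\cap Z''_{ij}$.

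First I would verify that passing from $(I,Z_i,\chi_i,W_i,p^i)$ to the refined data $(I\times J,Z''_{ij},\chi''_{ij},W_i,p^i|_{Z''_{ij}})$ leaves the quantity invariant. Indeed, since $Q$ has compact support only finitely many $\chi'_j$ contribute on $\mathrm{supp}(\chi_iQ)$, and linearity of $p^i_!$ together with $\sum_j\chi'_j=1$ gives $p^i_!(\chi_iQ)=\sum_j p^i_!(\chi_i\chi'_jQ)$; summing over $i$ yields the refined expression. The analogous identity holds for the second choice, producing the refined data $(I\times J,Z''_{ij},\chi''_{ij},W'_j,p'^j|_{Z''_{ij}})$ with the same total value as the second original expression.

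It then remains to compare the two refined expressions term-by-term in $(i,j)$. For each pair, I apply Lemma~\ref{lemma3.4} with the bi-submersion $Z''_{ij}$ in place of $W$, the distribution $\chi_i\chi'_jQ\in\cP_c(Z''_{ij},Y\cap Z''_{ij};\Omega^{1/2})$ in place of $Q$, and the two transverse morphisms $p^i|_{Z''_{ij}}$ and $p'^j|_{Z''_{ij}}$ playing the roles of $p^1,p^2$; the conclusion is exactly the equality of the corresponding $(i,j)$-summands in the two refined expressions. Summing over $I\times J$ gives equality of the two refined values, and combined with the two reductions above, equality of the two originally chosen expressions.

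The only potentially delicate point is ensuring that all the bi-submersion structure and transversality hypotheses are preserved under restriction to the open piece $Z''_{ij}$, and that all sums are finite on the support of $Q$, so that the linearity step and the application of Lemma~\ref{lemma3.4} are both justified; beyond this bookkeeping, no new analytical input is needed. \hfill$\square$
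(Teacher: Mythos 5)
Your argument is correct and is precisely the omitted ``immediate generalization'' the paper has in mind: pass to the common refinement $Z_i\cap Z'_j$ with the product partition of unity $\chi_i\chi'_j$, use linearity of $p^i_!$ and of the integrals to rewrite each of the two original expressions over the refinement, and then apply Lemma~\ref{lemma3.4} on each open piece $Z_i\cap Z'_j$ (itself a bi-submersion, with $Y\cap Z_i\cap Z'_j$ closed in it and $\chi_i\chi'_jQ$ compactly supported there). The bookkeeping points you flag (restrictions remaining morphisms of bi-submersions transverse to $Y$, local finiteness on $\mathrm{supp}(Q)$) all hold, so no further input is needed.
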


Let $(W,t,s)$ be a bi-submersion and $Y\subset W$ a submanifold. We say that $Y$ is \emph{transverse to $\zeta_W$} if there is an open cover $(Z_i)_{i\in I}$ of $W$ and $p^i:Z_i\to W_i$  morphisms of bi-submersions which are submersions transverse to $Y\cap Z_i$. The above Lemma makes sense of $$\int_M\bigg(\int_{t^{-1}\{x\}}\rho^{W}_u\cdot Q(u)\beta\circ\zeta_U(u)\bigg)d\mu(x)$$ for $Q\in \cP_c(W,Y;\Omega^{1/2})$.

\subsubsection{Extension of representations}

We now fix an atlas $\cU$ and a non degenerate $*$-representation  $\Pi$ of $C^*(\cU)$ on a Hilbert space $\cH$. 

Let us fix some notation: 

Denote by $\theta:\cA_\cU\to C^*(\cU)$ the natural morphism from $\cA_\cU$ to its Hausdorff-completion. If $U$ is a bi-submersion adapted to $\cU$, denote by $\theta_U:C_c^\infty(U;\Omega^{1/2})\to C^*(\cU)$ the composition $C_c^\infty(U;\Omega^{1/2})\stackrel{Q_U}{\longrightarrow} \cA_\cU\stackrel{\theta}{\longrightarrow}  C^*(\cU)$. Finally, put $\Pi_U=\Pi\circ \theta_U$.

According to \cite[\S 5]{AndrSk}, there is a triple $(\mu, H, \pi)$ where:
\begin{enumerate}\renewcommand{\theenumi}{\alph{enumi}}
\renewcommand{\labelenumi}{\theenumi)}
\item $\mu$ is a quasi-invariant measure on $M$;
\item $H =(H_{x})_{x \in M}$ is a measurable (with respect to $\mu$) field
of Hilbert spaces over $M$.
\item For every bi-submersion  $(U,t,s)$ adapted to $\cU$,  $\pi^U$ is a measurable (with respect
to $\mu\circ \lambda $) section of the field of unitaries
$\pi^U_u:H_{s(u)}\to H_{s(u)}$.
\end{enumerate}

Moreover:
\begin{enumerate}
\item $\pi $ is `defined on $\cG_\cU$':\\
if $f:U\to V$ is a morphism of bi-submersions, for almost all
$u\in U$ we have $\pi_u^U=\pi_{f(u)}^V$.
\item $\pi $ is a homomorphism: \\
If $U$ and $V$ are bi-submersions adapted to $\cU$, we have
$\pi^{U\circ V}_{(u,v)}=\pi^U_{u}\pi^V_{v}$ for almost all
$(u,v)\in U\circ V$.
\end{enumerate}

Then $\cH=\int^\oplus_MH_x\, d\mu(x)$ is the space of square integrable sections of $H$. For every bi-submersion $(U,t,s)$ adapted to $\cU$, we have:
$$\Pi_U(f)(\xi)(x) = \int_{t^{-1}\{x\}} (\rho^U_u \cdot f(u))\pi^U_{u}(\xi_{s(u)})$$
$\mu$-a.e. for every $ \xi \in
\cH$ and $x \in M$.

It follows that $\langle \eta,\Pi_U(f)\xi\rangle =\int _M\Big(\int_{t^{-1}\{x\}} (\rho^U_u \cdot f(u))\langle \eta_x,\pi^U_u(\xi)\rangle \Big)d\mu(x)$.

\begin{proposition}
Let $(W,t,s)$ be a bi-submersion and $Y\subset W$ a submanifold transverse to $\zeta_W$. There is a linear map $\Pi_{W,Y}:\cP_c(W,Y;\Omega^{1/2})\to \cL(H)$ such that for every open subset $Z\subset W$, every morphism $p:Z\to U$ of bi-submersions which is a submersion transverse to $Y\cap Z$ and every $Q\in \cP_c(Z,Y\cap Z;\Omega^{1/2})\subset \cP_c(W,Y;\Omega^{1/2})$ we have $\Pi_{W,Y}(Q)=\Pi_U(p_!(Q))$.
\end{proposition}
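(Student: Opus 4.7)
The plan is to patch together pushforwards via a partition of unity and invoke Lemma \ref{lemma3.5} to absorb the dependence on the patching data. The hypothesis that $Y$ is transverse to $\zeta_W$ provides an open cover $(Z_i)_{i\in I}$ of $W$ together with morphisms of bi-submersions $p^i:Z_i\to W_i$ that are submersions transverse to $Y\cap Z_i$. Pick a smooth partition of unity $(\chi_i)$ subordinate to $(Z_i)$. For each $i$, the cutoff $\chi_i Q$ lies in $\cP_c(Z_i,Y\cap Z_i;\Omega^{1/2})$, and the remark following Lemma \ref{lemma3.4} (i.e.\ the transversality of $p^i$ to $Y\cap Z_i$) guarantees that $p^i_!(\chi_i Q)$ is a smooth compactly supported half-density on $W_i$. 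I then define
$$\Pi_{W,Y}(Q):=\sum_{i\in I}\Pi_{W_i}\bigl(p^i_!(\chi_i Q)\bigr),$$
a finite sum of bounded operators since the support of $Q$ is compact; the assignment $Q\mapsto \Pi_{W,Y}(Q)$ is manifestly linear.

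The main step is to show that $\Pi_{W,Y}(Q)$ does not depend on the choice of $I$, $(Z_i)$, $(\chi_i)$, $(W_i)$, $(p^i)$. I would verify this by passing to matrix coefficients: for $\xi,\eta\in\cH$, the explicit formula for $\langle\eta,\Pi_U(f)\xi\rangle$ stated just before the proposition gives
$$\langle\eta,\Pi_{W,Y}(Q)\xi\rangle=\int_M\sum_{i\in I}\int_{t_i^{-1}\{x\}}\rho^{W_i}_w\cdot p^i_!(\chi_i Q)(w)\;\beta\circ \zeta_{W_i}(w)\,d\mu(x),$$
where $\beta(g):=\langle\eta_{t(g)},\pi(g)(\xi_{s(g)})\rangle$. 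This $\beta$ is a bounded measurable function on $\cG_\cU$: boundedness follows from Cauchy--Schwarz and unitarity of the $\pi^U_u$, and measurability from the assumption that the field $(\pi^U_u)$ is measurable and descends to $\cG_\cU$ via property (a). Lemma \ref{lemma3.5} is designed exactly for this expression and tells us the right-hand side depends only on $Q$ (and $\xi$, $\eta$). Since an operator on $\cH$ is determined by its matrix coefficients, $\Pi_{W,Y}(Q)$ is well-defined.

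The compatibility property in the statement is then immediate: if $Z\subset W$ is open and $p:Z\to U$ is a single morphism of bi-submersions that is a submersion transverse to $Y\cap Z$, I simply take the singleton cover $\{Z\}$ with $\chi\equiv 1$ on the support of $Q\in\cP_c(Z,Y\cap Z;\Omega^{1/2})$, whereupon the definition reduces to $\Pi_{W,Y}(Q)=\Pi_U(p_!(Q))$. I expect the one non-routine point to be the measurability check on $\beta$, which amounts to the standard verification that sesquilinear pairings against a measurable field of unitaries produce measurable functions compatible with the equivalence relation defining $\cG_\cU$; everything else is bookkeeping around Lemma \ref{lemma3.5} and the smoothing property of the pushforwards $p^i_!$.
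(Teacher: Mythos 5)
Your argument follows the paper's proof almost verbatim: define the operator by a partition of unity subordinate to a cover coming from the transversality of $Y$ to $\zeta_W$, reduce well-definedness to matrix coefficients, and invoke Lemma \ref{lemma3.5}; the compatibility statement then comes from taking the singleton cover. The one step that does not hold as written is your claim that $\beta(g)=\langle \eta_{t(g)},\pi(g)(\xi_{s(g)})\rangle$ is \emph{bounded} for arbitrary $\xi,\eta\in\cH$. Cauchy--Schwarz and unitarity only give $|\beta(g)|\le \|\eta_{t(g)}\|\,\|\xi_{s(g)}\|$, and for a general square-integrable section the fibrewise norms $x\mapsto\|\xi_x\|$ need not be essentially bounded; since Lemma \ref{lemma3.5} is stated for \emph{bounded} measurable $\beta$, your well-definedness argument does not apply directly to all of $\cH$. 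The paper avoids this by first taking $\xi,\eta$ to be \emph{bounded} square-integrable sections (for which $\beta$ really is bounded), establishing the identity of matrix coefficients there, and then concluding by density of bounded sections in $\cH$ --- legitimate because the candidate operators $\sum_i\Pi_{W_i}(p^i_!(\chi_iQ))$ are already bounded on $\cH$, so agreement on a dense subspace suffices. With that density step inserted, your proof is complete and coincides with the paper's.
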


\begin{proof}
Let $\xi ,\eta\in \int^\oplus H_x\, d\mu(x)=\cH$ be bounded square integrable sections, and define a bounded measurable function $\beta$ on $\cG_\cU$ by putting $\beta\circ \zeta_U(u)=\langle \eta_x,\pi^U_u(\xi_{s_{U}(u)})\rangle$.  It follows from Lemma \ref{lemma3.5} that, for $Q\in \cP_c(W,Y;\Omega^{1/2})$, using any partition of unit adapted to a nice cover of $W$ to construct $\Pi_{W,Y}(Q)$ we have $$\langle \eta,\Pi_{W,Y}(Q)\xi\rangle=\int_M\bigg(\int_{t^{-1}\{x\}}\rho^{W}_u\cdot Q(u)\beta\circ\zeta_U(u)\bigg)d\mu(x)$$ 
which is well defined by Lemma \ref{lemma3.5}. The conclusion follows by density of bounded sections in $\cH$.
\end{proof}

Taking $\Pi$ to be faithful, we find:

\begin{corollary} \label{cor3.7}
Let $(W,t,s)$ be a bi-submersion and $Y\subset W$ a submanifold transverse to $\zeta_W$. There is a linear map $\theta_{W,Y}:\cP_c(W,Y;\Omega^{1/2})\to \theta(\cA(\cU))$ such that for every open subset $Z\subset W$, every morphism $p:Z\to U$ of bi-submersions which is a submersion transverse to $Y\cap Z$ and every $Q\in \cP_c(Z,Y\cap Z;\Omega^{1/2})\subset \cP_c(W,Y;\Omega^{1/2})$ we have $\theta_{W,Y}(Q)=\theta_U(p_!(Q))$. \hfill $\square$
\end{corollary}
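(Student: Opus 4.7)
The corollary is an immediate consequence of the preceding proposition once we take $\Pi$ faithful, so the entire strategy is: pick a faithful non-degenerate representation $\Pi$ of $C^*(\cU)$ on some Hilbert space $\cH$ (which exists since $C^*(\cU)$ is a $C^*$-algebra; for instance its universal representation works), invoke the proposition to obtain the map $\Pi_{W,Y}:\cP_c(W,Y;\Omega^{1/2})\to \cL(\cH)$, identify its image inside $\Pi(\theta(\cA(\cU)))$, and then pull back along the injective map $\Pi|_{\theta(\cA(\cU))}$ to get $\theta_{W,Y}$.

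First I would handle the local case. If $Q\in \cP_c(Z,Y\cap Z;\Omega^{1/2})$ for an open $Z\subset W$ equipped with a morphism $p:Z\to U$ of bi-submersions which is a submersion transverse to $Y\cap Z$, then by transversality $p_!(Q)\in C_c^\infty(U;\Omega^{1/2})$, and the preceding proposition gives
$$\Pi_{W,Y}(Q)=\Pi_U(p_!(Q))=\Pi\bigl(\theta_U(p_!(Q))\bigr),$$
which lies in $\Pi(\theta(\cA(\cU)))$.

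Next, for a general $Q\in \cP_c(W,Y;\Omega^{1/2})$, I would pick an open cover $(Z_i)_{i\in I}$ of $W$ with morphisms $p^i:Z_i\to U_i$ as in the definition of ``$Y$ transverse to $\zeta_W$'' and a smooth partition of unity $(\chi_i)$ subordinate to it. Since $Q$ is compactly supported, only finitely many $\chi_iQ$ are nonzero, and each $\chi_iQ$ lies in $\cP_c(Z_i,Y\cap Z_i;\Omega^{1/2})$. By the local case and linearity of $\Pi_{W,Y}$,
$$\Pi_{W,Y}(Q)=\sum_{i}\Pi\bigl(\theta_{U_i}(p^i_!(\chi_iQ))\bigr)\in \Pi(\theta(\cA(\cU))).$$
Faithfulness of $\Pi$ on $C^*(\cU)$ makes the restriction $\Pi|_{\theta(\cA(\cU))}$ injective, so I can define $\theta_{W,Y}(Q)$ as the unique element of $\theta(\cA(\cU))$ mapped by $\Pi$ to $\Pi_{W,Y}(Q)$. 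Linearity of $\theta_{W,Y}$ is inherited from $\Pi_{W,Y}$, and the compatibility $\theta_{W,Y}(Q)=\theta_U(p_!(Q))$ for $Q$ supported in $Z$ with such a $p:Z\to U$ follows from $\Pi(\theta_{W,Y}(Q))=\Pi_{W,Y}(Q)=\Pi(\theta_U(p_!(Q)))$ and injectivity of $\Pi$.

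\textbf{Main obstacle.} There is essentially no obstacle left: all the work has been done in the preceding proposition, which in turn rests on Lemma \ref{lemma3.5} to show that the construction does not depend on the choices of cover, partition, and morphisms $p^i$. The one point deserving a line of verification is that a faithful non-degenerate representation of $C^*(\cU)$ exists at all; this is standard $C^*$-algebra theory (take the direct sum of all cyclic GNS representations, which is faithful and non-degenerate). Everything else reduces to the tautology that an injection $\Pi$ allows one to recover elements of a subspace from their images.
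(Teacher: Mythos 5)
Your proposal is correct and follows exactly the route the paper intends: the paper derives this corollary from the preceding proposition with the single remark ``Taking $\Pi$ to be faithful, we find,'' and your argument simply spells out that one line (existence of a faithful non-degenerate representation, the image landing in $\Pi(\theta(\cA(\cU)))$ via the partition-of-unity reduction to the local case, and pulling back by injectivity). Nothing is missing.
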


\subsection{Pseudodifferential kernels and multipliers}

Let $(U,t,s)$ be a bi-submersion and $V\subset U$ an identity bisection. Let $U'$ and $U''$ be bi-submersions. For $P\in \cP_c(U,V;\Omega^{1/2})$, $f\in C_c^\infty(U';\Omega^{1/2})$ and $g\in C_c^\infty(U'';\Omega^{1/2})$, we defined $P\star f\in \cP_c(U\circ U',V\circ U';\Omega^{1/2})$ and $g\star P\in \cP_c(U''\circ U,U''\circ V;\Omega^{1/2})$ (see Definition \ref{defi3.1}).

Note that $V\circ U'$ and $U''\circ V$ are bi-submersions and therefore transverse to $\zeta_{U\circ U'}$ and $\zeta_{U''\circ U}$ respectively.

\begin{theorem}
Let $(U,t,s)$ be a bi-submersion and $V\subset U$ an identity bisection. Let $U'$ and $U''$ be bi-submersions. For $P\in \cP_c(U,V;\Omega^{1/2})$, $f\in C_c^\infty(U';\Omega^{1/2})$ and $g\in C_c^\infty(U'';\Omega^{1/2})$ we have $\theta_{U''}(g)\theta_{U\circ U',V\circ U'}(P\star f)=\theta_{U''\circ U,U''\circ V}(g\star P)\theta_{U'}(f)$. In other words, there is a multiplier $\tilde \theta_{U,V}(P)$ of $\theta(\cA(\cU))$ such that $$\tilde \theta_{U,V}(P)\theta_{U'}(f)=\theta_{U\circ U',V\circ U'}(P\star f)\ \ \hbox{and}\ \ \theta_{U''}(g)\tilde \theta_{U,V}(P)=\theta_{U''\circ U,U''\circ V}(g\star P).$$
\end{theorem}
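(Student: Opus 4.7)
The plan is to reduce the claimed associativity identity to the already-established associativity of convolution in $\cA(\cU)$, via a density argument, and then deduce the multiplier structure formally from that identity. I will apply a faithful nondegenerate representation $\Pi$ of $C^*(\cU)$ (which exists since $C^*(\cU)$ is a $C^*$-algebra), use Corollary \ref{cor3.7} to identify $\theta_{W,Y}$ with the restriction of $\Pi_{W,Y}$ on a faithful copy, and then verify the identity as an equality of operators on $\cH$.

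First I would approximate $P$ by a sequence $P_n\in C_c^\infty(U;\Omega^{1/2})$ using \S\ref{density}: writing $P=h+\chi\cdot P_a\circ\phi$, the smoothing cut-off in the fiber variable of the symbol produces $P_n\to P$ in $C^{-\infty}$ with the additional property that, for every submersion $q$ transverse to $V$ and every $\psi\in C_c^\infty$, $q_!(\psi\cdot P_n)\to q_!(\psi\cdot P)$ in $C_c^\infty$. Fixing once and for all an open cover $(Z_i)$ of (a neighborhood of the support of) $U\circ U'$ together with morphisms $p^i:Z_i\to W_i$ transverse to $(V\circ U')\cap Z_i$ and a subordinate partition of unity $(\chi_i)$ as in Lemma \ref{lemma3.5}, the density statement applied to each $p^i$ yields $p^i_!\bigl(\chi_i\cdot(P_n\star f)\bigr)\to p^i_!\bigl(\chi_i\cdot(P\star f)\bigr)$ in $C_c^\infty$; plugging this into the formula
\[
\langle \eta,\Pi_{U\circ U',V\circ U'}(Q)\xi\rangle=\sum_i\int_M\Bigl(\int_{t^{-1}\{x\}}\rho\cdot p^i_!(\chi_i Q)\cdot \beta\Bigr)d\mu
\]
of the preceding proposition gives $\Pi_{U\circ U',V\circ U'}(P_n\star f)\to \Pi_{U\circ U',V\circ U'}(P\star f)$ in the weak operator topology on $\cH$. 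The analogous argument on $U''\circ U$ with $g\star P_n\to g\star P$ produces the symmetric convergence on the right-hand side.

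Next, for $P_n$ smooth, $P_n\star f$ coincides (as a smooth half-density on $U\circ U'$) with the external tensor product $P_n\otimes f$ pulled back through the two projections, and similarly $g\star P_n=g\otimes P_n$; consequently $\theta_{U\circ U',V\circ U'}(P_n\star f)=\theta_{U\circ U'}(P_n\star f)=Q_{U\circ U'}(P_n\otimes f)$ modulo $\cI$, and likewise for $g\star P_n$. Associativity of the convolution product $Q_{V_1}(f_1)Q_{V_2}(f_2)=Q_{V_1\circ V_2}(f_1\otimes f_2)$ in $\cA(\cU)$, applied to the triple $(U'',U,U')$, then gives
\[
\theta_{U''}(g)\,\theta_{U\circ U'}(P_n\star f)=\theta_{U''\circ U\circ U'}(g\otimes P_n\otimes f)=\theta_{U''\circ U}(g\star P_n)\,\theta_{U'}(f).
\]
Applying $\Pi$ and passing to the limit using the weak-operator convergence from the previous step yields the desired identity for the given $P$.

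Finally, the multiplier structure is formal. Define $L(\theta_{U'}(f))=\theta_{U\circ U',V\circ U'}(P\star f)$ and $R(\theta_{U''}(g))=\theta_{U''\circ U,U''\circ V}(g\star P)$ on generators; the proved identity is exactly the compatibility $\theta_{U''}(g)L(\theta_{U'}(f))=R(\theta_{U''}(g))\theta_{U'}(f)$, and a standard argument (two values of $L$ that agree against every $\theta_{U''}(g)$ on the left agree by nondegeneracy of the multiplication in $\theta(\cA(\cU))$ inside $C^*(\cU)$) shows $L,R$ descend to well-defined maps $\theta(\cA(\cU))\to \theta(\cA(\cU))$, so the pair $\tilde\theta_{U,V}(P)=(L,R)$ is a multiplier. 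The main obstacle is the second paragraph: one must be careful that the cover $(Z_i,p^i,\chi_i)$ used to compute $\Pi_{U\circ U',V\circ U'}$ can be chosen independently of $n$ (so that the $n\to\infty$ limit commutes with the finite sum of oscillatory integrals), and that the transversality of $p^i$ to $(V\circ U')\cap Z_i$—needed for the density statement of \S\ref{density} to apply—is preserved; this is ensured because $V\circ U'$ is itself a bi-submersion and hence transverse to $\zeta_{U\circ U'}$, so Proposition \ref{transverse} together with the remarks on minimal bi-submersions produces such covers locally.
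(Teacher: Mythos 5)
Your proof is correct, but it is organized differently from the paper's. The paper localizes the supports of $P,f,g$ so that there exist submersion morphisms $p:U\circ U'\to W'$ and $q:U''\circ U\to W''$ transverse to $V\circ U'$ and $U''\circ V$, and then identifies \emph{both} sides of the claimed identity with the single element $\theta_{U''\circ U\circ U',\,U''\circ V\circ U'}(g\star P\star f)$: the left side via the submersion $\id\times p$ and the right side via $q\times\id$, both transverse to $U''\circ V\circ U'$, invoking Corollary \ref{cor3.7} twice. All the analytic content (density of smooth kernels, continuity of push-forwards) is thereby concentrated once and for all in Lemmas \ref{lemma3.4}--\ref{lemma3.5} and Corollary \ref{cor3.7}. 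You instead rerun the density argument of \S\ref{density} at the level of the theorem itself: approximate $P$ by smooth $P_n$, observe that $\theta_{W,Y}$ agrees with $\theta_W$ on smooth kernels, use associativity of convolution in $\cA(\cU)$ for the smooth triple $(g,P_n,f)$, and pass to a weak-operator limit in a faithful representation. Both routes rest on the same two pillars (density plus associativity for smooth kernels), but yours never introduces the triple product $g\star P\star f$ on $U''\circ U\circ U'$ and so avoids checking transversality of $\id\times p$ to $U''\circ V\circ U'$, at the price of a longer limiting argument; the paper's version is shorter and makes visible that both sides are the same canonical object. Two small points: the transversality of your covers to $V\circ U'$ is exactly the remark preceding the theorem ($V\circ U'$ is itself a bi-submersion, hence transverse to $\zeta_{U\circ U'}$), not Proposition \ref{transverse}, which concerns strict transversality of $U'\circ U'\to U$ to $U\circ V$ and $V\circ U$ and is needed only for products of two pseudodifferential kernels; and your closing double-centralizer argument for the well-definedness of $\tilde\theta_{U,V}(P)$ as a map on $\theta(\cA(\cU))$ is a useful explicit supplement to what the paper leaves implicit.
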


\begin{proof}
It is enough to prove this theorem for $P,f,g$ with small enough support so that we may assume that there exist morphisms of bi-submersions $U\circ U'\to W'$ and $U''\circ U\to W''$ which are submersions respectively transverse to $V\circ U'$ and $U''\circ V$. Then the morphisms $\id\times p:(u'',u',u')\mapsto (u'',p(u,u'))$ and $q\times \id$ are morphisms of bi-submersions and are submersions respectively transverse to $U''\circ V\circ U'$.

We have $\theta_{U''}(g)\theta_{U\circ U',V\circ U'}(P\star f)=\theta_{U''\circ W'}((\id\times p)(g\star P\star f))=\theta_{U''\circ U\circ U',U''\circ V\circ U'}(g\star P\star f)$  by Corollary \ref{cor3.7}. In the same way $\theta_{U''\circ U,U''\circ V}(g\star P)\theta_{U'}(f)=\theta_{U''\circ U\circ U',U''\circ V\circ U'}(g\star P\star f).$
\end{proof}

Note that $\tilde \theta_{U,V}(P)$ is a closable multiplier, since its adjoint contains $\tilde \theta_{U^{-1},V}(P^*)$ and is therefore densely defined.

Let $p:U'\to U$ be a submersion and a morphism of bi-submersions such that $p(V')\subset V$. Recall from prop. \ref{push-forward} that there is a natural map $p_!:\cP(U',V';\Omega^{1/2})\to \cP(U,V;\Omega^{1/2})$ obtained by integration along fibers. 

\begin{proposition} \label{imagepseudo}
With the above notation, we have $\tilde \theta_{U,V}\circ p_!=\tilde \theta_{U',V'}$. \hfill $\square$
\end{proposition}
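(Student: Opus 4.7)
The plan is to test the equality $\tilde\theta_{U,V}(p_!Q)=\tilde\theta_{U',V'}(Q)$ against every generator $\theta_W(g)$ of $\theta(\cA(\cU))$. By the characterization of $\tilde\theta$ from the preceding Theorem,
$$\theta_W(g)\cdot\tilde\theta_{U,V}(p_!Q)=\theta_{W\circ U,W\circ V}(g\star p_!Q),\qquad \theta_W(g)\cdot\tilde\theta_{U',V'}(Q)=\theta_{W\circ U',W\circ V'}(g\star Q),$$
so the identity will follow as soon as these right-hand sides are shown to agree in $\theta(\cA(\cU))$ for every bi-submersion $W$ adapted to $\cU$ and every $g\in C_c^\infty(W;\Omega^{1/2})$; nondegeneracy of the left-action of $\theta(\cA(\cU))$ on itself then forces the two multipliers to coincide.

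The key geometric ingredient is that $\id_W\times p:W\circ U'\to W\circ U$ is a submersion and a morphism of bi-submersions sending $W\circ V'$ into $W\circ V$, whose restriction $W\circ V'\to W\circ V$ is a submersion (its fibers are those of $p|_{V'}:V'\to V$, a diffeomorphism onto an open subset since $p$ is a morphism of bi-submersions and $V, V'$ are identity bisections). An unwinding of the definition of $\star$, using that $g$ depends only on the first factor while the fibers of $\id_W\times p$ lie in the second factor, then yields the projection formula
$$(\id_W\times p)_!(g\star Q)=g\star p_!Q\quad\text{in }\cP_c(W\circ U,W\circ V;\Omega^{1/2}).$$

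With this in hand, I would identify both $\theta_{\cdot,\cdot}$ values using Corollary \ref{cor3.7}. Choose an open cover $(Z_\alpha)$ of $W\circ U$ with morphisms $q^\alpha:Z_\alpha\to A^\alpha$ to bi-submersions adapted to $\cU$, each a submersion transverse to $W\circ V\cap Z_\alpha$, and a subordinate partition of unity $(\chi_\alpha)$. Combining Corollary \ref{cor3.7} with the functoriality of push-forwards (Remarks \ref{push-forward-remarks}(b)) and the projection formula for the multiplier $\chi_\alpha$ gives
$$\theta_{W\circ U,W\circ V}(g\star p_!Q)=\sum_\alpha \theta_{A^\alpha}\bigl((q^\alpha\circ(\id\times p))_!((\chi_\alpha\circ(\id\times p))(g\star Q))\bigr).$$
The pulled-back cover $((\id\times p)^{-1}(Z_\alpha))$ of $W\circ U'$ with partition $(\chi_\alpha\circ(\id\times p))$ and morphisms $q^\alpha\circ(\id\times p)$ again satisfies the hypothesis of Corollary \ref{cor3.7}, since the restriction of $q^\alpha\circ(\id\times p)$ to $W\circ V'\cap(\id\times p)^{-1}(Z_\alpha)$ factors as the composition of the two submersions $W\circ V'\to W\circ V\cap Z_\alpha$ and $q^\alpha|_{W\circ V\cap Z_\alpha}:W\circ V\cap Z_\alpha\to A^\alpha$. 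Applying Corollary \ref{cor3.7} to $(W\circ U',W\circ V')$ produces the same sum, yielding the desired equality.

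The main obstacle I anticipate is the density bookkeeping behind the projection formula: one must verify that the canonical identifications of the half-density bundles $\Omega^{1/2}$ on $W$, $U$, $U'$, and their various compositions, are compatible with $\id_W\times p$ in such a way that $(\id_W\times p)_!(g\star Q)=g\star p_!Q$ holds literally, with no residual Jacobian or density correction. Once this is settled, all remaining steps reduce to formal applications of the push-forward, covering, and pseudodifferential identities already developed in Sections~1 and~3.
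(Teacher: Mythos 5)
The paper states this proposition without proof (the $\square$ at the end of the statement signals that the authors regard it as immediate from Corollary~\ref{cor3.7} and the functoriality $(q\circ p)_!=q_!\circ p_!$ of Remarks~\ref{push-forward-remarks}), and your argument is precisely that intended one, correctly spelled out: the projection formula $(\id_W\times p)_!(g\star Q)=g\star p_!Q$, the observation that $p|_{V'}$ is a diffeomorphism onto an open subset of $V$ so the required transversality of the composed morphisms persists, and the independence-of-choices statement of Corollary~\ref{cor3.7}. This is essentially the same approach as the paper, and it is correct.
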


\subsection{The space of pseudodifferential multipliers}

We have fixed an atlas $\cU =(U_i,t_i,s_i)_{i\in I}$ together with identity bisections $V_i\subset U_i$ such that $\bigcup_{i\in I}s_i(V_i)=M$ (\footnote{For a given $i$, $V_i$ may be empty.}). 

Denote by
$\widetilde U$ the disjoint union $\coprod_{i\in I} U_i$ and by $\widetilde V\subset \widetilde U$ the disjoint union $\coprod_{i\in I} V_i$.

 \begin{definition}
Let $m\in \Z$. We form a space $\Pseudo m(\cU,\cV)$.  This is the image  in the multiplier algebra of $\theta(\cA(\cU))$ of $\bigoplus_{i\in I} \cP^m_c(U_i,V_i;\Omega^{1/2})=\cP_c^m(\widetilde U,\widetilde V;\Omega^{1/2})$.

We define the \emph{space of pseudodifferential multipliers} to be the union  $\Pseudodif(\cU,\cV)$ of $\Pseudo m(\cU,\cV)$.

An element in $\bigcap_{m\in \Z}\Pseudo m(\cU,\cV)$ is called \emph{regularizing}. 
\end{definition}

Let $(U,t,s)$ be a bi-submersion adapted to $\cU$ and $V$ an identity bisection in $U$. We just constructed a linear map $\tilde \theta _{U,V}: \cP_c(U, V;\Omega^{1/2})\to \Pseudodif(\cU,\cV)$.

Elements of $\cA(\cU)$ give obviously rise to regularizing operators. On the other hand, a regularizing operator is for every $k$ the image of a function $f_k\in C_c^k(\widetilde U;\Omega^{1/2})$; as the map $\theta $ is not injective, it is not clear whether $f_k$ can be taken constant (\ie in $\cA(\cU)$).

\subsection{The longitudinal principal symbol}

\begin{definition}
The \emph{longitudinal principal symbol} of an element of $\cP_c^m(\widetilde U,\widetilde V;\Omega^{1/2})$ is a homogeneous function on the space $\cF^*\setminus M$ of non zero elements in $\cF^*$.

To construct it, we may assume that $s_i:V_i\to M$ are injective. Identify $V_i$ with its image in $M$. Let $P_i\in \cP(U_i,V_i;\Omega^{1/2})$ of order $m$. The principal symbol $\tilde\sigma_m(P_i)$ is defined to be the restriction to the subspace $\cF^*\setminus M$ of $N^*\setminus V$ of the principal symbol of $P_i$. Extending it by linearity, we define the longitudinal principal symbol of any element of $\bigoplus_{i\in I} \cP_c^m(U_i,V_i;\Omega^{1/2})$.
\end{definition}

\begin{remark} \label{remarksymbol}
It is not obvious whether the longitudinal principal symbol is defined in the image $\Pseudo m(\cU,\cV)$. This happens if the groupoid $\cG_\cU$ is longitudinally smooth. In that case, one may define $\tilde\sigma_m(P)(x,\xi)$ using the regular representation on $L^2((\cG_\cU)_x)$ and a formula as in Proposition \ref{exactsequence}: $\tilde\sigma_m(x,\xi)=\lim_{n\to +\infty}\langle e^{in\varphi}\chi_n, (in)^{-m}P(e^{in\varphi}\chi)(x)$ where $\varphi\in C_c^\infty((\cG_\cU)_x)$ with  derivative $\xi$ at $x$ and $\chi_n$ is a suitable function which has $L^2$ norm $1$ and has support around $x$ - it is of the form $\chi_n(y)=n^{k/2}\chi (n\|x-y\|^2)$ where $k=\dim (\cG_\cU)_x=\dim \cF_x$.

Even when this longitudinal principal symbol is well defined though, it is not clear whether there is an exact sequence as in Proposition \ref{exactsequence}, since an element in $P\in \cP_c^m(U,V;\Omega^{1/2})$ whose longitudinal principal symbol vanishes on $\cF^*\subset N^*$ may not be in $\cP_c^{m-1}(U,V;\Omega^{1/2})$. Namely, it is not clear to us whether there exists $Q\in \cP_c^{m-1}(U,V;\Omega^{1/2})$ which has the same image in $\Pseudodif (\cU,\cV)$ as $P$. Here is an example of this situation: Consider the foliation defined by the the action of $SO(3)$ in $\R^{3}$ (example \ref{ex:SO3}) and take the order $0$ symbol $a(x,\xi) = e^{-\frac{1}{\langle x|\xi \rangle^2}}$ outside $\cF^{*}$ and zero in $\cF^{*}$. This is a symbol of order $0$ which vanishes on $\cF^*$, but there is no pseudodifferential operator of order $-1$ whose symbol is $a$ in a neighborhood of $\cF^*$.
\end{remark}

\begin{remark}
If we change atlases, the pseudodifferential operators don't really change. Indeed, let $(U,t,s)$ be a bi-submersion and $V$ an identity bisection. Let $\cU$ be any atlas (\eg the minimal one). Then, since $\cU$ carries the identity bisection, there is a neighborhood $U'$ of $V$ in $U$ such that $(U',t,s)$ is adapted to $\cU$. Thus, there is an open cover $(U'_i)$ of $U'$ and morphisms $f_i:U'_i\to \widetilde U$ such that $f_i(V\cap U'_i)\subset \widetilde V$ (\cf prop. \ref{identbisect}). Every element of $\cP_c(U,V;\Omega^{1/2})$ can be written as $P+h$ with $P\in \cP_c(U',V;\Omega^{1/2})$ and $h\in C_c^\infty(U,\Omega^{1/2})$.

We deduce:
\begin{enumerate}
\item If we change the identity bisections we don't change at all the space $\Pseudodif (\cU,\cV)$. 

\item Let $\cU$ and $\cU'$ be atlases such that $ \cU$ is adapted to $\cU'$. We have a natural morphism $j:C^*(\cU)\to C^*(\cU')$. If $j$ is injective, then we have an equality $\cP(\cU',\cV')=\cP(\cU,\cV)+\theta(\cA(\cU'))$.
\end{enumerate}
\end{remark}

\subsection{Convolution: the algebra of pseudodifferential kernels}

\begin{lemma} \label{lemma3.15}
Let $U,W$ be bi-submersions adapted to $\cU$, $V\subset U$ an identity bisection and $p:U\circ U\to W$ a morphism of bi-submersions which is a submersion strictly transverse to $V\circ U$ and to $U\circ V$. Then, for $Q_1,Q_2\in \cP_c(U,V;\Omega^{1/2})$ we have $\tilde \theta(Q_1)\tilde \theta(Q_2)=\tilde \theta(p_!(Q_1\star Q_2))$. 
\end{lemma}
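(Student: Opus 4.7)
The plan is to verify this identity of multipliers by a density argument that reduces to the smooth case. The first step handles smooth $Q_i = f_i \in C_c^\infty(U;\Omega^{1/2})$. Here $\tilde\theta(f_i) = \theta_U(f_i)$, and the convolution product in $\cA(\cU)$ gives $\theta_U(f_1)\theta_U(f_2) = \theta_{U\circ U}(f_1\otimes f_2)$. Since $p : U\circ U \to W$ is a morphism of bi-submersions which is a submersion, the defining relations of $\cA(\cU)$ (the ideal $\cI$ generated by kernel elements of pushforwards) identify $Q_{U\circ U}(f_1\otimes f_2)$ with $Q_W(p_!(f_1\otimes f_2))$; thus $\theta_U(f_1)\theta_U(f_2) = \theta_W(p_!(f_1\otimes f_2)) = \tilde\theta_W(p_!(f_1\star f_2))$, where $f_1 \star f_2 = f_1 \otimes f_2$ on $U \circ U$.

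The second step extends the identity to general $Q_i$ by density. Approximate each $Q_i$ by smooth sequences $f_{i,n}\to Q_i$ in $\cP_c(U,V;\Omega^{1/2})$ as in Section \ref{density}. The submanifolds $V\circ U$ and $U\circ V$ of $U\circ U$ are transverse (a direct computation using that $V$ is an identity bisection shows their intersection is the diagonal copy of $V$, of dimension $\dim M$), so Proposition \ref{products}, combined with the strict transversality of $p$ to both submanifolds, produces $p_!(Q_1\star Q_2) \in \cP_c(W, p(V\circ V);\Omega^{1/2})$ as the limit of $p_!(f_{1,n}\star f_{2,n})$, with $C_c^\infty$-convergence of every pushforward along a submersion transverse to $p(V\circ V)$. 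Using the representation formulas of \S\ref{repsPseudo}, these convergences imply that $\tilde\theta(f_{i,n})\to\tilde\theta(Q_i)$ and $\tilde\theta_W(p_!(f_{1,n}\star f_{2,n}))\to\tilde\theta_W(p_!(Q_1\star Q_2))$ in the strict topology on multipliers of $\theta(\cA(\cU))$. Passing to the limit in the smooth identity $\tilde\theta(f_{1,n})\tilde\theta(f_{2,n}) = \tilde\theta_W(p_!(f_{1,n}\star f_{2,n}))$ yields the claim.

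The main obstacle is the strict-topology convergence invoked in the extension step. It is justified by expanding matrix coefficients of the form $\langle\eta,\Pi\circ\tilde\theta(Q)\xi\rangle$ via the formulas of \S\ref{repsPseudo} as integrals of local pushforwards (along morphisms of bi-submersions that are submersions transverse to the singular locus) against bounded measurable functions on $\cG_\cU$. The $C_c^\infty$-convergence of these pushforwards provided by Section \ref{density}, combined with the uniform compact support of the approximants $f_{i,n}$, yields dominated convergence of the resulting integrals, hence strong operator convergence of the representations and therefore strict convergence of the corresponding multipliers. Once this continuity is in hand, the limit of the smooth identity is exactly the statement of the lemma.
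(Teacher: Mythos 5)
Your proposal is correct and follows essentially the same route as the paper: verify the identity for smooth kernels using the convolution relations of $\cA(\cU)$ and the pushforward ideal $\cI$, then pass to general $Q_1,Q_2$ by the approximation of \S\ref{density}, checking convergence through matrix coefficients $\langle\eta,\Pi(\cdot)\xi\rangle$ in a faithful representation. The only cosmetic difference is that the paper tests both sides against an element $\theta(f)$ and compares the resulting matrix coefficients directly (a trilinear expression in $(Q_1,Q_2,f)$ continuous under the convergence of \S\ref{density}), rather than phrasing the limit as strict convergence of the multipliers themselves, which sidesteps any worry about multiplying two convergent sequences of a priori unbounded multipliers.
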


Note that by proposition \ref{products}, $Q_1\star Q_2$ makes sense as a distribution and $p_!(Q_1\star Q_2)$ is pseudodifferntial.

\begin{proof}
 Let $U'$ be another bi-submersion and $f\in C_c^\infty(U';\Omega^{1/2})$. We have to show that $\tilde \theta(Q_1)\tilde \theta(Q_2)\theta(f)=\tilde \theta(p_!(Q_1\star Q_2))\theta(f)$. To that end, we may take a faithful representation $\Pi$ and compute $\langle \eta,\Pi\Big(\tilde \theta(Q_1)\tilde \theta(Q_2)\theta(f)\Big)\xi\rangle $ and $\langle \eta,\Pi\Big(\tilde \theta(p_!(Q_1\star Q_2))\theta(f)\Big)\xi\rangle $. These two expressions are equal when $Q_1$ and $Q_2$ are smooth functions. The general case follows using \S \ref{density}.
\end{proof}

\begin{theorem}\label{multiplicativity}
The space $\Pseudodif (\cU,\cV)$ is a subalgebra of the multiplier algebra of $\theta (\cA(\cU))$. More precisely, given $P_i\in \cP_c^{m_i}(\widetilde U,\widetilde V;\Omega^{1/2})$ ($i=1,2$, there is $P\in \cP_c^{m}(\widetilde U,\widetilde V;\Omega^{1/2})$ such that $\tilde \theta(P_1)\tilde \theta(P_2)=\tilde \theta(P)$ and $\tilde \sigma_{m_1}(P_1)\tilde \sigma_{m_2}(P_2)=\tilde \sigma_{m}(P)$.
\end{theorem}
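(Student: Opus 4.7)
I would reduce the theorem to Lemma \ref{lemma3.15}, which handles the product of two pseudodifferential kernels on a single bi-submersion $U$ with a fixed identity bisection $V$, provided we have a submersion $p:U\circ U\to U$ that is a morphism of bi-submersions and is strictly transverse to both $V\circ U$ and $U\circ V$. Such a $p$ is furnished by Proposition \ref{transverse} after passing to a minimal bi-submersion at the relevant base point.

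First, by linearity and a partition of unity on $\widetilde U=\coprod_i U_i$, it suffices to treat the case in which $P_1$ and $P_2$ are each supported in arbitrarily small neighborhoods of points $v_1,v_2\in\widetilde V$ (the pieces supported away from $\widetilde V$ are smooth compactly supported half-densities, so the product lives in $\cA(\cU)$ and is already pseudodifferential of order $-\infty$). For a pair with $s(v_1)\neq t(v_2)$, the composition $P_1\star P_2$ vanishes in a neighborhood of the singular locus and again contributes only a smooth term, so I may further assume $s(v_1)=t(v_2)=x\in M$. Now choose a minimal bi-submersion $(U,t,s)$ with an identity bisection $V$ carrying the identity at $x$. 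Proposition \ref{identbisect} together with the submersive-morphism statement in the definition of minimality yields, for small enough neighborhoods $W_i\ni v_i$, submersive morphisms of bi-submersions $f_i:W_i\to U$ with $f_i(W_i\cap\widetilde V)\subset V$. Setting $Q_i=(f_i)_!(P_i|_{W_i})\in\cP_c^{m_i}(U,V;\Omega^{1/2})$, Proposition \ref{imagepseudo} gives $\tilde\theta_{U,V}(Q_i)=\tilde\theta(P_i)$, so we are reduced to the product of two kernels on the same $(U,V)$.

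At this point Proposition \ref{transverse} supplies a neighborhood $U'$ of the common image point in $U$ and a morphism-submersion $p:U'\circ U'\to U$ strictly transverse to both $V\circ U$ and $U\circ V$. Shrinking supports so that $Q_1\star Q_2$ lives in $U'\circ U'$, Proposition \ref{products}(1) makes sense of $Q_1\star Q_2$ as a distribution with transverse pseudodifferential singularities on $V\circ U$ and $U\circ V$, and Proposition \ref{products}(2) shows that $p_!(Q_1\star Q_2)\in\cP_c^{m}(U,V;\Omega^{1/2})$. Lemma \ref{lemma3.15} then yields $\tilde\theta(P_1)\tilde\theta(P_2)=\tilde\theta_{U,V}(p_!(Q_1\star Q_2))$. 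Reassembling the local contributions through the partition of unity produces the required $P\in\cP_c^{m}(\widetilde U,\widetilde V;\Omega^{1/2})$.

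For the symbol identity, Proposition \ref{products}(2) gives the principal symbol of $p_!(Q_1\star Q_2)$ as $p_!(\sigma_{m_1}(Q_1'))\cdot p_!(\sigma_{m_2}(Q_2''))$, where $Q_1'$ and $Q_2''$ denote the pullbacks of $Q_1,Q_2$ to $U\circ U$ via the first and second projection respectively. Because $p$ restricts to a diffeomorphism $V\circ V\to V$ (corresponding to composition with an identity arrow), these pushforwards sit naturally on $N^*V$; restriction to the closed cone $\cF^*\subset N^*V$ extracts the longitudinal principal symbol, and invariance of the longitudinal symbol under the morphisms $f_i$ (as discussed at the end of Section 2.2) rewrites it as $\tilde\sigma_{m_1}(P_1)\tilde\sigma_{m_2}(P_2)$. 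The main obstacle I anticipate is the bookkeeping in this last step: verifying that the canonical identifications (normal bundle of $V\circ U$ in $U\circ U$ pulled through $p$, and the embedding $\cF^*\hookrightarrow N^*V$ via $dt$ versus $ds$) are compatible so that Proposition \ref{products}(2) descends correctly to $\cF^*$. Once this local symbolic computation is in place, the partition-of-unity reassembly is routine.
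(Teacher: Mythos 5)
Your proposal is correct and follows essentially the same route as the paper: localize with a partition of unity, dispose of the pairs whose bisection supports have disjoint $s$-images, push both remaining factors into a common minimal bi-submersion via Proposition \ref{imagepseudo}, obtain the strictly transverse submersion $p$ from Proposition \ref{transverse}, and conclude with Proposition \ref{products} and Lemma \ref{lemma3.15}. The only nuance is that in the disjoint case the product is not literally smooth --- it still has pseudodifferential singularities along $V''_j\circ U''_k$ and $U''_j\circ V''_k$ --- but since these are bi-submersions transverse to $\zeta$, its image lies in $\theta(\cA(\cU))$ by Corollary \ref{cor3.7}, which is exactly what the paper uses.
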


\begin{proof}
By prop. \ref{transverse}, there is a cover of $M$ by (open) sets $s(V'_i)$ where $V'_i$ is an identity bisection of a bi-submersion $U'_i$ adapted to $\cU$ for which there is a morphism of bi-submersions $p_i':U'_i\circ U'_i\to W_i$ which is a submersion strictly transverse to $V'_i\circ U'_i$ and to $U'_i\circ V'_i$.

There is a finite open cover $(U''_j)$ of the $Supp(P_1)\cup Supp(P_2)$ such that, putting $V''_j=U''_j\cap \widetilde V$, if $s(V''_j)\cap s(V''_k)\not=\emptyset$, then there  are morphisms of bi-submersions from $U''_j$ and from $U''_k$ to the same $U'_i$ which are submersions.

Using a partition of the identity adapted to $U''_j$, we are reduced to the case where $P_1\in \cP_c^{m_1}(U''_j,V''_j;\Omega^{1/2})$ and $P_2\in \cP_c^{m_2}(U''_k,V''_k;\Omega^{1/2})$.

 If $V''_j\circ V''_k=\emptyset$ then $P_1\star P_2\in \cP_c(U''_j\circ U''_k,W;\Omega^{1/2})$ with $W=V''_j\circ U''_k\cup U''_j\circ V''_k$, and therefore $\tilde \theta(P_1)\tilde \theta(P_2)\in \theta(\cA_\cU)$. 

If $s(V''_j)\cap s(V''_k)\not=\emptyset$, we may replace $P_1$ and $P_2$ by their images $Q_1,Q_2$ in $\cP_c(U'_i,V'_i;\Omega^{1/2})$ (prop. \ref{imagepseudo}). Now, by proposition \ref{products}, $Q_1\star Q_2$ makes sense as a distribution and $(p'_i)_!(Q_1\star Q_2)$ is pseudodifferntial and has the right longitudinal principal symbol. The result follows by Lemma \ref{lemma3.15}.
\end{proof}

\section{Longitudinal ellipticity}

In this section we assume that the manifold $M$ is compact.

\begin{definition}
A pseudodifferential operator $P \in \cP_c^m(\cU,\cV)$ is said to be \emph{longitudinally elliptic} if its longitudinal principal symbol $\tilde \sigma_m(P)$ is invertible.
\end{definition}

\subsection{Parametrix}

We now state the analogues in our setting of some most important classical results in the pseudodifferental calculus.

\begin{theorem}[Existence of quasi-inverses]\label{parametrix}
Let $P\in \cP_c^m(\widetilde U,\widetilde V;\Omega^{1/2})$ be a longitudinally elliptic operator of order $m$. There is a pseudodifferential operator $Q\in \cP_c^{-m}(\widetilde U,\widetilde V;\Omega^{1/2})$ of order $-m$ such that $1-\tilde \theta(P)\tilde \theta(Q)$ and $1-\tilde \theta(Q)\tilde \theta(P)$ are regularizing.
\end{theorem}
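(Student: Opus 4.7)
The plan is to adapt the classical Neumann-series construction of a parametrix, working at the level of pseudodifferential kernels in $\cP_c^{m}(\widetilde U,\widetilde V;\Omega^{1/2})$ and controlling the symbol calculus through Theorem \ref{multiplicativity}.

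First, I would build a rough parametrix $Q_0\in\cP_c^{-m}(\widetilde U,\widetilde V;\Omega^{1/2})$. Longitudinal ellipticity means $\tilde\sigma_m(P)$ is invertible on the closed subset $\cF^*\setminus M$ of the dual normal bundle $N^*\setminus\widetilde V$, so by continuity it is invertible on an open conic neighborhood $\Omega$. Using a conic cutoff equal to $1$ on a smaller neighborhood $\Omega'$ of $\cF^*\setminus M$ and supported in $\Omega$, I extend $\tilde\sigma_m(P)^{-1}$ to a classical symbol of order $-m$ defined on all of $N^*$; the exact sequence of Proposition \ref{exactsequence}, applied componentwise over each $U_i$, lifts this symbol to a kernel $Q_0$.

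Next, let $E$ be a kernel of order $0$ concentrated on $\widetilde V$ (a Dirac-type distribution along the identity bisection, with constant principal symbol) representing the identity multiplier. By Theorem \ref{multiplicativity}, $\tilde\theta(P)\tilde\theta(Q_0)=\tilde\theta(S_0)$ for some $S_0\in\cP_c^0$, and by construction $\sigma_0(S_0)=\sigma_m(P)\cdot\sigma_{-m}(Q_0)$ agrees with $1$ throughout $\Omega'$. Hence the full principal symbol of $R_0:=E-S_0$ vanishes on an open neighborhood of $\cF^*$, which (see the obstacle below) is enough to conclude $\tilde\theta(R_0)\in\Pseudo{-1}(\cU,\cV)$. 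I would then iterate in the standard way by setting $Q_n=Q_0\sum_{k=0}^{n-1}R_0^k$, where each product is rewritten as a single kernel via Theorem \ref{multiplicativity}; then $1-\tilde\theta(P)\tilde\theta(Q_n)=\tilde\theta(R_0^n)\in\Pseudo{-n}$. Applying Theorem \ref{smoothing} inside each $U_i$ produces $Q\in\cP_c^{-m}$ with $Q-Q_n$ of order $-m-n$ for every $n$, whence $1-\tilde\theta(P)\tilde\theta(Q)\in\bigcap_n\Pseudo{-n}$ is regularizing. A symmetric construction yields a left parametrix $Q'$, and the identity $\tilde\theta(Q)-\tilde\theta(Q')=\bigl(1-\tilde\theta(Q')\tilde\theta(P)\bigr)\tilde\theta(Q)-\tilde\theta(Q')\bigl(1-\tilde\theta(P)\tilde\theta(Q)\bigr)$ shows $Q-Q'$ is regularizing, so $Q$ is both a left and a right parametrix.

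The main obstacle is justifying $\tilde\theta(R_0)\in\Pseudo{-1}$. Remark \ref{remarksymbol} explicitly warns that, in general, vanishing of the longitudinal principal symbol on $\cF^*$ alone does not force a kernel in $\cP_c^m$ to be equivalent modulo $\cP_c^{m-1}$. My construction sidesteps this because $\sigma_{-m}(Q_0)$ was built to extend $\tilde\sigma_m(P)^{-1}$ to a genuine open neighborhood of $\cF^*$ in $N^*$, so $\sigma_0(R_0)$ vanishes on that entire neighborhood rather than only on the singular subset $\cF^*$. Open-set vanishing of the full principal symbol is enough, via the classical symbol exact sequence of Proposition \ref{exactsequence}, to furnish a representative in $\cP_c^{-1}$. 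Propagating this open-neighborhood improvement consistently through the iterated products $R_0^n$, using the push-forward formula from Proposition \ref{products} inside Theorem \ref{multiplicativity}, is the technical heart of the argument.
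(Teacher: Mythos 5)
Your overall skeleton (rough parametrix, Neumann iteration, asymptotic summation via Theorem \ref{smoothing}) matches the paper's, but the step on which everything hinges --- that $\tilde\theta(R_0)\in\Pseudo{-1}(\cU,\cV)$ --- is not justified, and the justification you offer is incorrect. Proposition \ref{exactsequence} lets you drop the order of a kernel only when its full principal symbol vanishes on \emph{all} of $S^*N$. Your $R_0=E-S_0$ has full symbol $1-\sigma_m(P)\,\sigma_{-m}(Q_0)$, which is $0$ on the conic neighborhood $\Omega'$ but equal to $1$ outside $\Omega$ (there your cutoff kills $\sigma_{-m}(Q_0)$, and $\sigma_m(P)$ may itself vanish away from $\cF^*$, so no choice of $Q_0$ can repair this). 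Hence $R_0$ is genuinely of order $0$ as a kernel, and whether its \emph{image} under $\tilde\theta$ lies in $\Pseudo{-1}(\cU,\cV)$ is exactly the difficulty flagged in Remark \ref{remarksymbol}; vanishing of the symbol on an open neighborhood of $\cF^*$, rather than only on $\cF^*$, does not by itself resolve it.

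The missing idea is minimality. The paper's proof (Lemma \ref{localization}, resting on Proposition \ref{transverse}) covers $M$ by bi-submersions $(U'_i,V'_i)$ that are \emph{minimal} at points of the identity bisection: there $N^*_v=\cF^*_v$, so longitudinal ellipticity of $P$ becomes invertibility of the \emph{plain} principal symbol of a representative $P'_i$ over a whole neighborhood $\overline{V_i}$ (by compactness of spheres and openness of invertibility). On such pieces one inverts the full symbol, the error $P'_iT_i\chi_i-\chi_i$ is honestly of order $-1$ as a kernel by the classical exact sequence, and Lemma \ref{parametrix:main} then gains one order at a time against an arbitrary $S\in\Pseudo{k}(\widetilde U,\widetilde V)$; no longitudinal analogue of Proposition \ref{exactsequence} is ever invoked. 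Your construction stays on the original, non-minimal $\widetilde U$, where the part of $N^*$ away from $\cF^*$ cannot be discarded. (One can show that pushing your $R_0$ forward to a minimal bi-submersion annihilates its symbol near the minimal point, but making that precise and propagating it through the iterated products is essentially a reproof of Lemmas \ref{localization} and \ref{parametrix:main}; it is the substance your proposal lacks, not a technical detail to be deferred.)
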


The main ingredient of the proof is:

\begin{lemma}\label{parametrix:main}
Let $P\in \cP_c^m(\widetilde U,\widetilde V;\Omega^{1/2})$ be longitudinally elliptic and $S\in \Pseudo k(\widetilde U,\widetilde V)$. Then there exists $Q\in \cP_c^{k-m}(\widetilde U,\widetilde V;\Omega^{1/2})$ such that $\tilde \theta(P)\tilde \theta(Q) - S\in \Pseudo {k-1} (\widetilde U,\widetilde V)$.
\end{lemma}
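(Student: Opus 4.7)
The plan is to construct $Q$ by locally inverting the principal symbol of $P$ against that of $S$, then patching with a partition of unity. The subtle point (flagged in Remark~\ref{remarksymbol}) is that vanishing of the longitudinal symbol on $\cF^*$ does not \emph{a priori} imply that a kernel lies in $\cP^{k-1}_c$; to force this, one must work on \emph{minimal} bi-submersions, where the longitudinal and full classical symbols coincide.

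\emph{Reduction to minimal bi-submersions.} Using compactness of $M$ together with the existence of minimal bi-submersions through every point, cover $M$ by finitely many open sets $s(V'_j)$ where $V'_j\subset U'_j$ is an identity bisection of a bi-submersion $U'_j$ adapted to $\cU$ that is minimal along $V'_j$. At a minimal point $u\in V'_j$ one has $\dim\ker ds'_j=\dim\cF_{s(u)}$, so the epimorphism $N_uV'_j\to\cF_{s(u)}$ is an isomorphism; dually $\cF^*|_{s(V'_j)}=N^*V'_j$, and the longitudinal principal symbol on $U'_j$ is nothing but the classical principal symbol.

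\emph{Local symbolic inversion.} Fix a smooth partition of unity $(\phi_j)$ on $M$ subordinate to $(s(V'_j))$. Via Corollary~\ref{cor3.7} (combined with Proposition~\ref{imagepseudo}), I would replace $P$ and $S$ by representatives $P'_j\in\cP^m_c(U'_j,V'_j;\Omega^{1/2})$ and $S'_j\in\cP^k_c(U'_j,V'_j;\Omega^{1/2})$ cut off by $\phi_j\circ s$. Longitudinal ellipticity of $P$, combined with $\cF^*=N^*V'_j$, makes $\sigma_m(P'_j)$ an invertible classical symbol off the zero section, so $a_j:=\sigma_k(S'_j)\cdot\sigma_m(P'_j)^{-1}\in S^{k-m}_{cl}(V'_j,N^*V'_j)$ is well defined. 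Using Proposition~\ref{exactsequence} pick $Q'_j\in\cP_c^{k-m}(U'_j,V'_j;\Omega^{1/2})$ with $\sigma_{k-m}(Q'_j)=a_j$, transport each back to $\widetilde U$ via Proposition~\ref{identbisect}, and let $Q$ be the resulting element of $\cP_c^{k-m}(\widetilde U,\widetilde V;\Omega^{1/2})$.

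\emph{Verification and the main obstacle.} Theorem~\ref{multiplicativity} gives $\tilde\theta(P)\tilde\theta(Q)=\tilde\theta(R)$ for some $R\in\cP^k_c(\widetilde U,\widetilde V)$ whose longitudinal symbol on $\cF^*$ equals $\sum_j\phi_j\,\sigma_m(P'_j)\,a_j=\sum_j\phi_j\,\sigma_k(S'_j)=\tilde\sigma_k(S)$. Choosing a representing kernel $S_0\in\cP^k_c(\widetilde U,\widetilde V)$ for $S$, the difference $R-S_0$ has vanishing longitudinal principal symbol. The hard part is exactly the obstruction noted in Remark~\ref{remarksymbol}: vanishing of the longitudinal symbol does not immediately place $\tilde\theta(R-S_0)$ in $\Psi^{k-1}$. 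I would bypass this by transporting $R-S_0$ back onto the minimal $U'_j$, where $\cF^*=N^*V'_j$, so that vanishing of the longitudinal symbol becomes vanishing of the full classical principal symbol; the restricted kernels are then genuinely of order $k-1$, and Proposition~\ref{imagepseudo} delivers $\tilde\theta(P)\tilde\theta(Q)-S\in\Psi^{k-1}(\widetilde U,\widetilde V)$.
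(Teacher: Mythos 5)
Your overall strategy --- localize to bi-submersions that are minimal along an identity bisection, invert the symbol of $P$ there, and patch with a partition of unity --- is the same as the paper's (Lemma \ref{localization} followed by the proof of Lemma \ref{parametrix:main}). But there is a genuine gap in the way you justify the key identification, and it breaks your final verification. Minimality of $U'_j$ at a point $v\in V'_j$ gives $N^*_vV'_j=\cF^*_v$ \emph{at that point only}: since $x\mapsto\dim\cF_x$ is merely upper semi-continuous, at nearby points $x$ the epimorphism $N_xV'_j\to\cF_x$ may fail to be injective, so $\cF^*_x$ is in general a \emph{proper} subspace of $N^*_xV'_j$ on every neighborhood of $v$. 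The paper's Example \ref{ex:SO3} is exactly this situation: the identity bi-submersion of the $SO(3)$-foliation is minimal at the origin, yet $\cF^*_x\subsetneq N^*_x$ for all $x\neq0$, and the symbol $e^{-1/\langle x|\xi\rangle^2}$ of Remark \ref{remarksymbol} lives precisely on this minimal bi-submersion. Consequently your last step fails: pushing $R-S_0$ forward onto the minimal $U'_j$ does \emph{not} convert vanishing of the longitudinal symbol into vanishing of the full classical symbol, so the obstruction of Remark \ref{remarksymbol} is not bypassed. (The invertibility of the full symbol $\sigma_m(P'_j)$ near $\overline{V_j}$ is nevertheless true, but for a different reason: it holds at the minimal point because there $N^*_v=\cF^*_v$, and it propagates to a neighborhood by openness of invertibility, using compactness of the spheres --- this is exactly what Lemma \ref{localization} establishes.)

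The repair is to arrange the cancellation at the level of \emph{full} classical symbols on each single $U'_j$, never passing through the longitudinal symbol of a globally assembled kernel. This is what the paper does: it chooses $T_j$ of order $-m$ with full symbol $\sigma_m(P'_j)^{-1}$ near $\overline{V_j}$, so that $P'_jT_j\chi_j-\chi_j$ has vanishing full symbol on $U'_j$ and therefore genuinely lies in $\cP^{-1}_c(U'_j,V'_j;\Omega^{1/2})$, and then sets $Q=\sum_jT_jR_j$ with $R_j$ representing $\chi_jS$; the error is then an honest product of an order $-1$ kernel with an order $k$ kernel. Your $Q'_j$ with full symbol $\sigma_k(S'_j)\sigma_m(P'_j)^{-1}$ would also work, provided you verify directly, via Proposition \ref{products} applied on $U'_j\circ U'_j$ and the morphism of Proposition \ref{transverse}, that $p^j_!(P'_j\star Q'_j)-S'_j$ has vanishing \emph{full} symbol on $U'_j$ and hence lies in $\cP^{k-1}_c(U'_j,V'_j;\Omega^{1/2})$, rather than appealing to the vanishing of the longitudinal symbol of $R-S_0$.
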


which, in turn, relies on the following result:

\begin{lemma}\label{localization}
Let $P\in \cP_c^m(\widetilde U,\widetilde V;\Omega^{1/2})$ be longitudinally elliptic.  Then there exists the following data: 
\begin{enumerate}\renewcommand{\theenumi}{\roman{enumi}}
\renewcommand{\labelenumi}{{\rm (\theenumi)}}
\item \label{itemaa}a finite set $I$, bi-submersions $(U'_{i},t'_{i},s'_i)_{i \in I}$ and identity bisections $V'_i\subset U_i'$ and morphisms of bi-submersions $p^i:U'_i\circ U'_i\to W_i$ that are submersions strictly transverse to $U'_i\circ V'_i$ and to $V'_i\circ U'_i$;
\item \label{itembb} open relatively compact subsets $U_i\subset U'_i$ such that $V_i=V'_{i} \cap U_{i}$ is relatively compact in $V'_i$;
\item $\bigcup_{i \in I}s_i(V_i) = M$;
\item operators $P'_{i} \in \cP_c^m(U'_{i},V'_{i};\Omega^{1/2})$ whose (plain) principal symbol $\sigma_m(P'_i)$ is invertible on $\overline{V_i}$;
\item smooth functions $\phi_{i} \in C^{\infty}_{c}(U'_{i})$ such that $\phi_{i}|_{V_{i}} = 1$
\end{enumerate}
so that $\phi_{i}P-P'_{i}$ is regularizing.
\end{lemma}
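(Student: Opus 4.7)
My plan is to construct the data in three stages. First, I fix the geometric data $(U'_i, V'_i, p^i, U_i, V_i, \phi_i)_{i \in I}$. For each $x \in M$, Proposition~\ref{transverse} (applied to a bi-submersion minimal at a point $u^{(x)}$ above $x$, together with an identity bisection $V^{(x)}\ni u^{(x)}$) provides a morphism $p^{(x)}: U^{(x)}\circ U^{(x)} \to W^{(x)}$ strictly transverse to $V^{(x)}\circ U^{(x)}$ and $U^{(x)}\circ V^{(x)}$. Compactness of $M$ extracts a finite subcover of $\{s^{(x)}(V^{(x)})\}$ and gives the index set $I$ together with the data in (i). Shrinking the bi-submersions produces relatively compact $U_i \subset U'_i$ with $V_i = V'_i \cap U_i \Subset V'_i$ satisfying (ii) and (iii), along with cutoffs $\phi_i \in C^\infty_c(U'_i)$ satisfying (v).

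Second, I transport $P$ locally to each $U'_i$. Writing $P = \sum_j P_j$ with $P_j \in \cP_c^m(U_j, V_j; \Omega^{1/2})$, Proposition~\ref{identbisect} yields, in neighborhoods of each $u \in V_j$ with $s_j(u) \in s'_i(V_i)$, local morphisms of bi-submersions $U_j \to U'_i$ sending $V_j$ into $V'_i$. A subordinate partition of unity combined with Proposition~\ref{imagepseudo} (the compatibility $\tilde\theta \circ p_! = \tilde\theta$ along such morphisms) assembles the push-forwards into a kernel $\widetilde P_i \in \cP_c^m(U'_i, V'_i; \Omega^{1/2})$ whose image in $\Pseudodif(\cU,\cV)$ equals $\tilde\theta(\phi_i P)$ modulo regularizing multipliers, and whose longitudinal principal symbol equals $\tilde\sigma_m(P)$ restricted to $\cF^* \cap N^*V'_i$ over $V'_i$.

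The third and critical stage converts $\widetilde P_i$ into $P'_i$ with invertible full principal symbol on $\overline{V_i}$. By longitudinal ellipticity and compactness, the longitudinal symbol is bounded below on $\cF^* \cap S^*V'_i|_{\overline{V_i}}$. Fixing a Euclidean metric on $N^*V'_i$ and the orthogonal projection $\pi:N^*\to \cF^*$, I extend it to an invertible homogeneous symbol $\sigma_i$ on $N^*V'_i\setminus V'_i$ over $\overline{V_i}$ by setting $\sigma_i(x,\xi)=\sigma_m(\widetilde P_i)(x,\pi\xi)+\lambda\|\xi-\pi\xi\|^m$ for sufficiently large $\lambda > 0$, and I take $P'_i$ to be any element of $\cP_c^m(U'_i,V'_i;\Omega^{1/2})$ with principal symbol $\sigma_i$, supported near $\overline{V_i}$. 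The \textbf{main obstacle} is proving that $\widetilde P_i - P'_i$ yields a regularizing multiplier: the difference of full symbols vanishes on $\cF^*$ but not elsewhere, and Remark~\ref{remarksymbol} explicitly cautions that such vanishing need not entail a drop of order in $\Pseudodif(\cU,\cV)$. My intended resolution is to refine $P'_i$ iteratively so that $\widetilde P_i - P'_i$ becomes smoothing \emph{as a kernel on $U'_i$} (not merely on $\cF^*$): at each step I correct by a lower-order kernel matching the full symbol of $\widetilde P_i$ off $\cF^*$ as well as on $\cF^*$, invoking Theorem~\ref{smoothing} for the asymptotic summation. The image of a smoothing kernel on $U'_i$ under $\theta$ lies in $\theta(\cA(\cU))\subset \bigcap_m \Pseudo m(\cU,\cV)$ and is automatically regularizing. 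Maintaining invertibility of $\sigma_m(P'_i)$ throughout this iteration is where the minimality of $U'_i$ at $u^{(i)}$ becomes essential: since $N^*_{u^{(i)}}V'_i = \cF^*_{x^{(i)}}$, the extension $\sigma_i$ agrees with $\sigma_m(\widetilde P_i)$ near that point, and by shrinking $V_i$ one obtains enough geometric room to absorb the off-$\cF^*$ corrections without destroying invertibility of the principal symbol.
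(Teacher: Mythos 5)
Your stages 1 and 2 (the geometric cover via Proposition \ref{transverse} and compactness, and the local transport of $P$ to the $U'_i$ via Proposition \ref{identbisect} and Proposition \ref{imagepseudo}) match the paper. The gap is in stage 3, and it is a real one: your construction of $P'_i$ is internally inconsistent. You prescribe a principal symbol $\sigma_i(x,\xi)=\sigma_m(\widetilde P_i)(x,\pi\xi)+\lambda\|\xi-\pi\xi\|^m$ that differs from $\sigma_m(\widetilde P_i)$ at top order $m$ off $\cF^*$, and then propose to make $\widetilde P_i-P'_i$ smoothing \emph{as a kernel on $U'_i$} by iterated lower-order corrections. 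But a kernel whose order-$m$ symbol is nonzero somewhere on $N^*V'_i\setminus\cF^*$ has order exactly $m$ there, and no correction of order $m-1$ or below can change its principal symbol; forcing the difference to be smoothing as a kernel forces $\sigma_m(P'_i)=\sigma_m(\widetilde P_i)$ on all of $N^*$, which destroys the very modification you introduced to gain invertibility. The only remaining escape --- that the difference is regularizing as a multiplier because its \emph{longitudinal} symbol vanishes --- is precisely what the counterexample in Remark \ref{remarksymbol} rules out. So as written, you cannot simultaneously have (iv) and the conclusion that $\phi_iP-P'_i$ is regularizing.

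The paper's resolution is simpler and requires no symbol modification at all: it is the reason minimality appears in the statement. Choose $(U',V')$ minimal at a point $v\in V'$ with $s(v)=x$. Minimality means $\dim U'=\dim M+\dim\cF_x$, so the normal fiber satisfies $N^*_v=\cF^*_x$ \emph{exactly}; hence the plain principal symbol of the transported operator $P'$ at $v$ is the longitudinal symbol on all of $N^*_v$, and longitudinal ellipticity already gives its invertibility there --- there is nothing off $\cF^*$ to extend over at that point. By continuity of the symbol and compactness of the spheres, the set of $w\in V'$ at which $\sigma_m(P')(w,\cdot)$ is invertible on all of $N^*_w$ (including directions outside $\cF^*_{s(w)}$, where nearby fibers may be larger) is open; one then shrinks $V_i$ to a relatively compact neighborhood of $v$ inside it, which is exactly what items (ii) and (iv) are set up to record. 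With $P'_i$ simply the localized transport of $\phi_iP$, the regularizing property of $\phi_iP-P'_i$ holds essentially by construction. You were one step away from this: you correctly invoked $N^*_{u^{(i)}}V'_i=\cF^*_{x^{(i)}}$ at the end, but used it to patch the extension rather than to observe that no extension is needed.
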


\begin{proof}[Proof of \ref{localization}]
Let $x \in M$; consider a pair $(U',V')$ such that $(U',t',s')$ is a bi-submersion, $V'$ is an identity bisection, which is minimal at a point $v\in V'$ with $s(v)=x$ . Take $\phi\in C_c^\infty(M)$ to be $1$ in a neighborhood of $x$ with support in $s(V')$. There exists an operator $P'\in \cP(U',V')$ of order $m$, such that  $\phi P - P'$ is regularizing. Whence $\sigma_{m}(P)\phi = \sigma_{m}(P')|_{\cF^{*}}$ and $\sigma_{m}(P')(v,\xi)$ is invertible for every $\xi\in N^*_{v}$ since $(U',V')$ is minimal at $v$.  The set of $w\in V'$ for which $\sigma_{m}(P')(w,\xi)$ is invertible for all $\xi\in N^*_ {w}$ is open (by compactness of the spheres). It follows that $\sigma_{m}(P')(w,\xi)$ is invertible for every $w$ in a small enough neighborhood of $u$ in $V'$ and $\xi \in N^*_ {w}$.

The result follows by compactness of $M$ (using prop. \ref{transverse}).
\end{proof}

Here are the proofs of the previous two results:

\begin{proof}[Proof of \ref{parametrix:main}]
Consider the data $(U_{i},V_{i}), (U'_{i},V'_{i})$ and $P'_{i}$ of \ref{localization} associated to $P$. Let $(\chi_{i})_{i \in I}$ be a partition of unity associated to the cover $(V_{i})_{i \in I}$.  

Since the (plain) symbol of $P'_{i}$ is invertible over $\overline V_i$, there exist   $T_{i} \in \cP(U'_{i},V'_{i};\Omega^{1/2})$ of order $-m$ whose (plain) principal symbol is $\sigma_{m}(P'_{i})^{-1}$ in a neighborhood of $\overline V_i$. Then $P'_iT_i\chi_i-\chi\in \cP_c^{-1}(U'_{i},V'_{i};\Omega^{1/2})$ (we use $p^i_!$ to make this composition).

There is an operator $R_i\in \cP_c^k(U_{i},V_{i};\Omega^{1/2})$ whose image in $\Pseudodif(\cU,\cV)$ is $\chi_{i} S$ up to regularizing operators. Put then $Q=\sum_{i\in I} T_iR_i$.
\end{proof}

\begin{proof}[Proof of \ref{parametrix}]
Lemma \ref{parametrix:main} allows us to follow the classical proof: \\
First construct  $Q_{0}\in \cP_c^{-m}(\widetilde U,\widetilde V;\Omega^{1/2})$ such that $I-Q_0P$ is of negative order. By putting $Q_{k} = Q_{0}(I - PQ_{0})^{k}=(I - Q_{0}P)^{k}Q_0$ we obtain a sequence of operators of order $-m-k$, $i \in \N$. From \ref{smoothing} it follows that there exists $Q$ of order $-m$ (asymptotically the sum of the $Q_{k}$) such that $I-PQ$ and $I-QP$ are regularizing.
\end{proof}

\subsection{Square roots}

\begin{theorem}[square roots] \label{self-adjoint}
If $P \in \cP_c^{2m}(\widetilde U,\widetilde V;\Omega^{1/2})$ is self-adjoint of even order and $\tilde\sigma_{2m}(P)>0$, there is a self-adjoint $Q\in \cP_c^{m}(\widetilde U,\widetilde V;\Omega^{1/2})$ selfadjoint such that $P-Q^2$ is smoothing.
\end{theorem}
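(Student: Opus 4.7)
The plan is to imitate the classical symbolic construction of a square root by successive approximation, working with bi-submersion kernels in $\cP_c(\widetilde U, \widetilde V; \Omega^{1/2})$ and preserving self-adjointness at every stage.

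\textbf{Step 1 (initial approximation).} Since $\tilde\sigma_{2m}(P) > 0$ on $\cF^*\setminus M$, the operator $P$ is in particular longitudinally elliptic, so Lemma \ref{localization} provides a finite cover by bi-submersions $(U'_i,t'_i,s'_i)$ with identity bisections $V'_i$ that are minimal at the points of $V_i$, together with local representatives $P'_i$ whose \emph{plain} principal symbols $\sigma_{2m}(P'_i)$ are invertible on a neighborhood of $\overline{V_i}$. In each chart, extend the positive real function $\sqrt{\tilde\sigma_{2m}(P)}$ from $\cF^*$ to a positive-real classical symbol $b_i$ of order $m$ on $N^*_{V'_i}$ (using a tubular neighborhood retraction to $\cF^*$ for instance), and realize $b_i$ as the principal symbol of an operator $Q'_{0,i} \in \cP_c^m(U'_i,V'_i;\Omega^{1/2})$. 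A partition of unity subordinate to $(s_i(V_i))$ assembles these into $Q'_0 \in \cP_c^m(\widetilde U,\widetilde V;\Omega^{1/2})$ with $\tilde\sigma_m(Q'_0) = \sqrt{\tilde\sigma_{2m}(P)}$; setting $Q_0 := \tfrac12(Q'_0 + (Q'_0)^*)$ preserves this (real) principal symbol and yields a self-adjoint initial approximation with $P - Q_0^2$ self-adjoint of order $\le 2m-1$.

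\textbf{Step 2 (induction).} Assume $Q_k\in \cP_c^m$ is self-adjoint with $P - Q_k^2$ self-adjoint of order $\le 2m-k-1$. For any self-adjoint $R \in \cP_c^{m-k-1}$,
\begin{equation*}
P - (Q_k + R)^2 \;=\; (P - Q_k^2) \;-\; (Q_k R + R Q_k) \;-\; R^2,
\end{equation*}
and $R^2$ has order $\le 2(m-k-1) \le 2m-k-2$. Hence it suffices that $\tilde\sigma_{m-k-1}(R) = \tilde\sigma_{2m-k-1}(P - Q_k^2)/\bigl(2\tilde\sigma_m(Q_k)\bigr)$ on $\cF^*$; this ratio is real (numerator real by self-adjointness of $P-Q_k^2$) and well-defined (denominator equals $2\sqrt{\tilde\sigma_{2m}(P)}>0$). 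The same localize-extend-patch-then-symmetrize procedure as in Step 1 produces a self-adjoint $R_k$ with the prescribed principal symbol, and we set $Q_{k+1} := Q_k + R_k$, which is still self-adjoint with $P-Q_{k+1}^2$ self-adjoint of order $\le 2m-k-2$.

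\textbf{Step 3 (asymptotic sum and conclusion).} Since $Q_{k+1}-Q_k = R_k$ has order $\le m-k-1$, Theorem \ref{smoothing} yields $Q\in \cP_c^m(\widetilde U,\widetilde V;\Omega^{1/2})$ with $Q - Q_k$ of order $\le m-k-1$ for all $k$; as every $Q_k$ is self-adjoint, replacing $Q$ by $\tfrac12(Q+Q^*)$ preserves this asymptotic relation and makes $Q$ self-adjoint. From $Q^2 - Q_k^2 = (Q-Q_k)Q + Q_k(Q-Q_k)$ we get that $Q^2-Q_k^2$ has order $\le 2m-k-1$; combined with $P - Q_k^2$ of the same order, this shows $P - Q^2$ has order $\le 2m-k-1$ for every $k$, hence is regularizing (smoothing) as required.

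The main obstacle is Step 1: $\tilde\sigma_{2m}(P)$ a priori lives only on the (non-smooth) closed subset $\cF^* \subset N^*$, and its positive square root must be extended to a positive, real, classical symbol on $N^*$ in a way compatible with the bi-submersion atlas before it can be realized as the principal symbol of an honest kernel; once the localization bookkeeping of Lemma \ref{localization} is carried through and combined with self-adjointification, the remaining iteration is purely algebraic.
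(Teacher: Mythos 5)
Your overall architecture --- a first approximation with symbol $\sqrt{\sigma_{2m}}$, a Newton-type induction dividing by $2\sigma(Q_0)$, symmetrization at each stage, and an asymptotic sum via Theorem \ref{smoothing} --- is the same as the paper's. But the execution of both symbolic steps has a genuine gap, and it is exactly the obstacle you name at the end without resolving. In Step 1 you extend $\sqrt{\tilde\sigma_{2m}(P)}$ from $\cF^*$ to a classical symbol on $N^*$, and in Step 2 you prescribe only the \emph{longitudinal} symbol $\tilde\sigma_{m-k-1}(R)$ on $\cF^*$ and conclude that $P-(Q_k+R)^2$ drops in order because its longitudinal principal symbol vanishes there. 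That implication is false in this setting: as Remark \ref{remarksymbol} and Example \ref{ex:SO3} show, a kernel whose longitudinal principal symbol vanishes on $\cF^*$ need not have the same image in $\Pseudodif(\cU,\cV)$ as a kernel of lower order, because $\cF^*$ is in general a proper closed subset of $N^*$ and the plain symbol can survive off $\cF^*$. So matching longitudinal symbols does not put the remainder in $\Pseudo{2m-k-2}(\cU,\cV)$, and your induction does not close; the same defect already affects your claim that $P-Q_0^2$ has order $\le 2m-1$.

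The paper avoids this by never dividing longitudinal symbols. Lemma \ref{localization} replaces $P$ locally, up to regularizing terms, by kernels $P'_i$ on bi-submersions that are \emph{minimal} along the identity bisection, where the \emph{plain} principal symbol $\sigma_{2m}(P'_i)$ is invertible (and may be taken positive) on all of $N^*$ over $\overline{V_i}$; one then takes self-adjoint $Q'_i$ with plain symbol $\sqrt{\sigma_{2m}(P'_i)}$ and patches symmetrically by $Q_0=\sum_i\chi_iQ'_i\chi_i$. The inductive correction $Q_n$ is produced by Lemma \ref{parametrix:main}, which is a statement about multipliers (it delivers $Q_n$ with $2Q_0Q_n-R_n$ of lower order as an element of the filtered algebra) precisely because the division is performed on plain symbols over the minimal charts. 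To repair your argument you must route both the initial square root and each inductive division through the plain symbols of minimal local representatives rather than through an extension of $\tilde\sigma_{2m}(P)^{1/2}$ off $\cF^*$; the remaining ingredients of your iteration (self-adjointification, commutation up to lower order via Proposition \ref{products}, the asymptotic sum) are then correct.
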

\begin{proof}
We use lemma \ref{localization} and the notation there. Coming back to its proof, we may assume that the (plain) symbol of $P'_i$ restricted to $V_i$ is $>0$. Let then $Q'_i\in  \cP_c^{-1}(U'_{m},V'_{i};\Omega^{1/2})$ that we may assume self-adjoint, such that the restriction to $V_i$ of its plain principal symbol is $\sqrt{\sigma_{2m}(P'_i)}$. Let $\chi_i^2$ be a partition of the identity adapted to $V_i$, and put $Q_0=\sum_{i\in I} \chi_i Q'_i\chi_i$. 
Then $P-Q_{0}^{2}$ is of order $2m-1$. Note that $Q_0$ is elliptic

Suppose we constructed $Q_{0},\ldots, Q_{n-1}$ self-adjoint, such that $Q_j$ has order $m-j$ and $R_{n} = P-(\sum_{j=0}^{n-1}Q_{j})^{2}$ is self adjoint of order $2m-n$. Thanks to lemma \ref{parametrix:main}, we find $Q_n$ of order $m-n$ such that $2Q_0Q_n-R_n$ has order $m-n-1$. It is a consequence of prop. \ref{products} that $Q_0$ and $Q_n$ commute up to lower order, therefore $2Q_nQ_0-R_n$ has order $m-n-1$. Since $Q_0$ and $R_n$ are selfajoint, we may replace $Q_n$ by $1/2(Q_n+Q_n^*)$. Hence $Q_{n}$ is a sequence that satisfies \ref{smoothing}. Put $Q'$ the asymptotic sum of the $Q_{j}$s and $Q = \frac{Q' + Q'^{*}}{2}$. This is self-adjoint and also an asymptotic sum for $Q_{n}$. By construction, $P-Q^2$ is smoothing.
\end{proof}


\section{The extension of zero order pseudodifferential operators}

Let us begin by a remark that will allow us to assume that the manifold $M$ is compact.

\begin{remark}\label{Mbecomescompact}
\begin{enumerate}
\item Let $M'$ be an open subset of $M$. Then $C_c^\infty (M')\cF$ is a foliation $\cF'$ on $M'$. A bi-submersion $(U,t,s)$ for $(M,\cF)$ restricts to a bi-submersion of $(M',\cF')$ by putting $U'=\{u\in U;\ s(u)\in M', \ t(u)\in M'\}$. In this way, an atlas $\cU$ of $(M,\cF)$ restricts to an atlas of $\cU'$ of  $(M',\cF')$. By extending compactly supported functions on $\cU'$, we embed $C^*(\cU')$ into  $C^*(\cU)$.

\item Assume $M'$ is relatively compact in $M$. Then there exists $f\in C_c^\infty(M)$ which is everywhere nonzero on $M'$. Note that $\{fX;\ X\in \cF\}$ is a foliation on $M$ which has the same restriction to $M'$ as $M$. There is a compact manifold $M''$ which contains an open subset diffeomorphic to a neighborhood of the support of $f$. Then $M''$ carries a foliation which has the same restriction to $M'$ as $\cF$.
\end{enumerate}
\end{remark}

\begin{lemma}
\label{lemmanegative} Every sufficiently negative order  pseudodifferential operator defines an element in $C^*(\cU)$. More precisely, given a bi-submersion $(U,t,s)$ and an identity bisection $V\subset U$, let $P\in \cP_c^{-m}(U,V;\Omega^{1/2})$ with $m$ strictly bigger than the dimension of the fibers of $s$ and $t$, then $\tilde \theta(P)\in C^*(\cU)$.
\end{lemma}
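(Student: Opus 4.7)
The approach is to show that when the order $-m$ satisfies $m>k$ (the codimension of $V$ in $U$, equivalently the fiber dimension of $s$ and $t$), the kernel $P$ is actually a continuous compactly supported section of $\Omega^{1/2}$ on $U$, and then to approximate it in $C^*$-norm by smooth compactly supported functions. Writing $P=h+\chi\cdot P_a\circ\phi$ as in the definition, the symbol $a\in S^{-m}_{cl}(V,N^*)$ satisfies $|a(x,\xi)|\le C(1+|\xi|)^{-m}$; for $m>k$ this makes $\xi\mapsto a(x,\xi)$ absolutely integrable over the $k$-dimensional fiber $N^*_x$, so the oscillatory integral defining $P_a$ converges absolutely to a continuous function. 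Hence $P\in C^0_c(U;\Omega^{1/2})$.

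Next, the smooth truncations $P_n$ from \S\ref{density} obtained by replacing $a$ with $a\cdot\chi_1(\|\xi\|/n)$ satisfy
\[
  \sup_u |P(u)-P_n(u)| \;\le\; C\int_{\|\xi\|\ge n/2}(1+|\xi|)^{-m}\,d\xi \;=\; O(n^{k-m})\;\longrightarrow\;0,
\]
with all supports contained in a common compact set $K\subset U$. I would then invoke the standard Renault-type bound
\[
  \|\theta_U(f)\|_{C^*(\cU)} \;\le\; \|f\|_I \;:=\; \max\!\Big(\sup_x\int_{t^{-1}(x)}\!\rho^U_u|f(u)|,\;\sup_x\int_{s^{-1}(x)}\!(\rho^U_u)^{-1}|f(u)|\Big),
\]
valid for $f\in C_c(U;\Omega^{1/2})$: applying Cauchy-Schwarz to the formula $\langle\eta,\Pi_U(f)\xi\rangle=\int_M\int_{t^{-1}(x)}\rho^U_u f(u)\langle\eta_x,\pi^U_u\xi_{s(u)}\rangle\,d\mu$ together with the quasi-invariance relation for $\mu$ gives $\|\Pi_U(f)\|\le\|f\|_I$ in any non-degenerate representation $\Pi$, and hence also in $C^*(\cU)$. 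Since $P_n\to P$ uniformly on $K$ (on which $\rho^U$ and the fiber volumes are bounded), $\|P_n-P_m\|_I\to 0$, so $\theta_U(P_n)$ is Cauchy in $C^*(\cU)$ and converges to some $T\in C^*(\cU)$.

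Finally, I would identify $T$ with $\tilde\theta_{U,V}(P)$ as multipliers of $\theta(\cA(\cU))$. For any bi-submersion $U'$ and any $g\in C_c^\infty(U';\Omega^{1/2})$,
\[
  T\cdot\theta_{U'}(g) \;=\; \lim_n \theta_U(P_n)\theta_{U'}(g) \;=\; \lim_n \theta_{U\circ U'}(P_n\star g),
\]
while by construction $\tilde\theta_{U,V}(P)\theta_{U'}(g)=\theta_{U\circ U',V\circ U'}(P\star g)$. Since the normal bundle to $V\circ U'$ in $U\circ U'$ still has dimension $k$, the earlier argument shows that $P\star g$ is continuous and that $P_n\star g\to P\star g$ uniformly with supports in a common compact set. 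In a faithful representation, dominated convergence in the integral formula for $\Pi_{U\circ U',V\circ U'}(\cdot)$ identifies the two limits, giving $T\cdot\theta_{U'}(g)=\tilde\theta_{U,V}(P)\theta_{U'}(g)$; since this holds for all $(U',g)$, we conclude $\tilde\theta_{U,V}(P)=T\in C^*(\cU)$.

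The main technical obstacle is the Renault-type $I$-norm estimate in the bi-submersion setting; this is a direct adaptation of the classical groupoid argument via the explicit formula for $\Pi_U$ and the quasi-invariance of $\mu$, but it requires careful tracking of the half-density factors $\rho^U$. Once that bound is in hand, the remaining steps are standard density and approximation manipulations.
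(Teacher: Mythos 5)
Your proposal is correct and follows essentially the same route as the paper: observe that for $m>k$ the symbol estimate makes the oscillatory integral absolutely convergent, so $P$ is a continuous compactly supported section, and then use the $L^1$-type ($I$-norm) estimate together with density of smooth functions to land in $C^*(\cU)$. The only addition is your explicit verification that the $C^*$-limit of the truncations $\theta_U(P_n)$ agrees with the multiplier $\tilde\theta_{U,V}(P)$, a detail the paper leaves implicit.
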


\begin{proof}
A continuous function with compact support in $U$ defines an element in $C^*(\cU)$ (thanks to the $L^1$ estimate and by density of $C_c^\infty(U)$). Now, if $a$ is of sufficiently negative order, the integral $$\int\!\! \int _{N^*v}a(v,\xi)\chi(u)e^{i\langle u,\xi\rangle}$$ makes sense and thus the distribution $P$ is actually a continuous function with compact support in $U$. 
\end{proof}

\begin{theorem} \label{thm:extension}
\begin{enumerate}
\item  Negative order pseudodifferetial operators are in $C^*(\cU)$, as well as those zero order operators whose principal symbol vanishes on $\cF^*$.
\item Zero order pseudodifferential operators define bounded multipliers of
the $C^*$-algebra of the foliation.
\end{enumerate}
\end{theorem}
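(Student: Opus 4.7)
The plan rests on three ingredients: the $L^1$-type estimate of Lemma~\ref{lemmanegative}, the square-root theorem~\ref{self-adjoint}, and the following elementary $C^*$-algebraic fact---if $A$ is a closed two-sided ideal in a $C^*$-algebra $B$ and $x\in B$ satisfies $x^*x\in A$, then $x\in A$ (proof: $|x|=(x^*x)^{1/2}\in A$ by continuous functional calculus, and for any $f\in C([0,\|x\|])$ with $f(0)=0$ and $f(t)=1$ for $t\geq\varepsilon$, one has $xf(|x|)\in B\cdot A\subseteq A$ and $\|x-xf(|x|)\|\leq\varepsilon$). As a preliminary observation, every smoothing element of $\Pseudodif(\cU,\cV)$ lies in $C^*(\cU)$: it admits representatives in $\cP_c^{-m}(\widetilde U,\widetilde V;\Omega^{1/2})$ for every $m$, and Lemma~\ref{lemmanegative} applies as soon as $m$ exceeds the fiber dimension of $s$ (equivalently $t$).

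For part~(2), given $P\in\Pseudo{0}(\cU,\cV)$ choose $N>\|\tilde\sigma_0(P)\|_\infty$ on the cosphere bundle of $\cF^*$; then $N^2 I - P^*P$ is self-adjoint of order $0$ with strictly positive longitudinal symbol, and Theorem~\ref{self-adjoint} furnishes a self-adjoint $Q$ of order $0$ and a smoothing $R$ with $N^2 I-P^*P=Q^2+R$. In any non-degenerate $*$-representation $\pi$ of $C^*(\cU)$,
\[
\pi(P)^*\pi(P)\leq\pi(P)^*\pi(P)+\pi(Q)^2=N^2 I-\pi(R),
\]
whence $\|\pi(P)\|^2\leq N^2+\|R\|_{C^*(\cU)}$ uniformly in $\pi$ by the preliminary observation. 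For the negative-order portion of~(1), if $P$ has order $-m<0$ then $(P^*P)^k$ has order $-2mk$, so Lemma~\ref{lemmanegative} places $(P^*P)^k$ in $C^*(\cU)$ for $k$ large enough; continuous functional calculus in $C^*(\cU)$ with $t\mapsto t^{1/k}$ (which vanishes at $0$) then gives $P^*P\in C^*(\cU)$, and the $C^*$-algebraic fact applied to $P\in M(C^*(\cU))$ (bounded by~(2)) yields $P\in C^*(\cU)$.

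The delicate remaining case is a zero-order $P$ with $\tilde\sigma_0(P)|_{\cF^*}=0$; once more it suffices to show $P^*P\in C^*(\cU)$, for then the $C^*$-algebraic fact gives $P\in C^*(\cU)$. For each $\varepsilon>0$ the operator $\varepsilon I - P^*P$ is self-adjoint of order $0$ with longitudinal symbol $\varepsilon>0$ on $\cF^*$, so Theorem~\ref{self-adjoint} yields self-adjoint $Q_\varepsilon$ of order $0$ and smoothing $R_\varepsilon$ with $P^*P+R_\varepsilon=\varepsilon I-Q_\varepsilon^2$. Since $\pi(\varepsilon I-Q_\varepsilon^2)$ has norm at most $\max(\varepsilon,\|\pi(Q_\varepsilon)\|^2)$, a uniform bound $\|\pi(Q_\varepsilon)\|^2\leq\varepsilon+o(1)$ as $\varepsilon\to 0$ would show $\|\pi(P^*P)-\pi(-R_\varepsilon)\|\to 0$, placing $P^*P$ in the norm-closure of $C^*(\cU)$ inside $M(C^*(\cU))$, hence in $C^*(\cU)$. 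The main obstacle is producing this uniform bound: part~(2) applied to $Q_\varepsilon$ (whose longitudinal symbol is $\sqrt\varepsilon$ on $\cF^*$) gives only $\|\pi(Q_\varepsilon)\|^2\leq\varepsilon+\|R'_\varepsilon\|_{C^*(\cU)}$ for an auxiliary smoothing $R'_\varepsilon$, so one must refine the square-root construction of Theorem~\ref{self-adjoint} to ensure that the tail smoothings $R_\varepsilon$ and $R'_\varepsilon$ can be made of arbitrarily small $C^*$-norm---essentially, by carrying the asymptotic expansion of the square root sufficiently far, trading more terms for a smaller smoothing remainder as $\varepsilon\to 0$.
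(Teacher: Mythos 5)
Your part~(2) and your handling of strictly negative order are essentially correct; the route through $(P^*P)^k\in C^*(\cU)$ for large $k$, $k$-th roots of positive elements of the ideal, and the fact that $x^*x\in A$ forces $x\in A$ for a closed two-sided ideal $A$ is a legitimate (if more elaborate) alternative to the paper, which simply treats negative order as a special case of vanishing longitudinal symbol. The genuine gap is in the case you yourself flag as delicate, namely $\tilde\sigma_0(P)|_{\cF^*}=0$, and the repair you propose points in the wrong direction. You are trying to prove $P^*P\in C^*(\cU)$ by approximating it \emph{in the multiplier norm} by the elements $-R_\varepsilon$ of $C^*(\cU)$, which forces you to ask for a uniform bound $\|\pi(Q_\varepsilon)\|^2\le\varepsilon+o(1)$, i.e.\ for norm control of the smoothing tails produced by the square-root construction as $\varepsilon\to 0$. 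Nothing in the asymptotic-sum construction of Theorem~\ref{self-adjoint} (which relies on Theorem~\ref{smoothing} and gives no quantitative handle on the remainder) provides such control, and no such control is needed.

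The missing idea is to pass to the quotient $C^*$-algebra $\overline{\Pseudodif(\cU,\cV)}/C^*(\cU)$; the quotient makes sense because $C^*(\cU)$, being the closure of $\theta(\cA(\cU))$, is a closed two-sided ideal in the closure of the zero-order operators. In that quotient every smoothing remainder vanishes \emph{identically}, with no norm estimate required: from the identity $P^*P+Q_\varepsilon^*Q_\varepsilon=\varepsilon\cdot 1+R_\varepsilon$ with $R_\varepsilon$ smoothing, hence in $C^*(\cU)$, one gets, writing $[\,\cdot\,]$ for the class in the quotient, $0\le[P]^*[P]\le[P]^*[P]+[Q_\varepsilon]^*[Q_\varepsilon]=\varepsilon\cdot 1$, so $\|[P]\|^2\le\varepsilon$ for every $\varepsilon>0$ and therefore $[P]=0$, i.e.\ $P\in C^*(\cU)$. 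This is exactly the paper's one-line conclusion (``in the quotient $C^*$-algebra the norm of $P$ is $\le t$ for all $t>0$''), and it also subsumes the negative-order half of part~(1), since such operators have vanishing order-zero symbol.
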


\begin{proof} Using remark \ref{Mbecomescompact}, we may assume $M$ is compact. By \ref{self-adjoint}, if $\|\tilde \sigma_P^0\|<t$, there is $Q\in \Pseudodif(\cU,\cV)$ such that $P^*P+Q^*Q=t^2+R$ where $R$ is of negative enough order, so that it belongs to the $C^*$-algebra of the foliation (in fact it can even be taken smoothing).  We have $(Pf)^*(Pf)+(Qf)^*(Qf)=t^2\, f^*f+f^*Rf$ for all $f\in \cA(\cU)$.

It follows that:
 \begin{itemize}
\item  $\|Pf\|\le k\|f\|$, where $k=\sqrt{t^2+\|R\|}$, hence $P$ extends to a bounded multiplier and (b) follows.

\item if $\sigma_P^0=0$, then in the quotient $C^*$-algebra $\overline{\Pseudodif(\cU,\cV)}/C^*(\cU)$, the norm of $P$ is $\le t$ for all $t>0$, whence $P\in C^*(\cU)$.
\qedhere
\end{itemize}
 \end{proof}

We thus have an exact sequence of $C^*$-algebras 
\begin{eqnarray}\label{0-order}
0\to C^*(M,\cF)\to \Psi^*(M,\cF)\to B\to 0 
\end{eqnarray}
where $\Psi^*(M,\cF)$ denotes
the closure of the algebra of zero order pseudodifferential
operators with respect to multiplier norm and order $0$ symbol. The algebra $B$ is a quotient of the algebra $C_0(S^*\cF)$ of continuous functions on the cosphere ``bundle''. As discussed in remark \ref{remarksymbol}, if the groupoid $\cG_\cU$ is longitudinally smooth, then $B=C_0(S^*\cF)$.

\section{Longitudinally elliptic  operators of positive order} 
\label{sect2.8}

In this section we assume that $M$ is compact.

\subsection{Longitudinally elliptic operators and regular multipliers}

Recall \cite{Baaj, BaajJulg, Woronowicz} that an unbounded multiplier $T$ of a $C^*$-algebra is said to be \emph{regular} if it is densely defined, its adjoint is densely defined and its graph is \emph{orthocomplemented}, which means that $A\oplus A=G\oplus G^\perp$, where $G=\{(x,Tx);\ x\in \dom\, T\}$ is the graph of $T$ and $G^\perp=\{(T^*y,-y);\ y\in \dom\,T^*\}$ its orthogonal complement for the obvious $A$ valued scalar product in $A\oplus A$. 

Let $\Pi $ be a non degenerates representation of $A$. It extends to a representation $\tilde \Pi$ of the multiplier algebra $\cM(A)$. Every regular unbounded multiplier $T$ of $A$ gives rise to a closed operator $\widehat {\Pi}(T)$ whose graph is the closure of $\{(\Pi(a)\xi,\Pi(Ta)\xi);\ a\in \dom\, T;\ \xi\in H_\Pi\}$. The adjoint of $\widehat {\Pi}(T)$ is $\widehat {\Pi}(T^*)$. In particular, if $T$ is self adjoint, so is $\widehat {\Pi}(T)$.

\bigskip In \cite[\S 3,4]{Vassout} Vassout proved that elliptic pseudodifferential operators (of positive order) on a Lie groupoid $G$ give rise to regular operators. The proof in \cite{Vassout} can be adapted to our setting to show:

\begin{theorem}
If $P \in \Pseudo m(\cU,\cV)$ is the image of a longitudinally elliptic operator of order $m$, then $\overline{P}$ is a regular multiplier on $C^{*}(\cU)$ ($m>0$).
\end{theorem}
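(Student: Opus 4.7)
The plan is to adapt Vassout's strategy from \cite[§3--4]{Vassout} to our bi-submersion setting. Recall the Baaj--Julg/Woronowicz criterion: a closed densely defined multiplier $T$ of a $C^*$-algebra with densely defined adjoint is regular if and only if the range of $1 + T^*T$ is dense (equivalently, the closure of $T^*T$ is self-adjoint). In our situation $T = \overline P$; the formal adjoint $P^* \in \Pseudo m(\cU,\cV)$ is densely defined as a multiplier, and by Theorem \ref{multiplicativity} the operator $1 + P^*P$ lies in $\Pseudo{2m}(\cU,\cV)$ with everywhere strictly positive longitudinal principal symbol $1 + |\tilde\sigma_m(P)|^2$. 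Hence the task reduces to suitably inverting this positive, longitudinally elliptic, even-order operator modulo $C^*(\cU)$.

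I would begin by applying Theorem \ref{self-adjoint} to $1 + P^*P$ to obtain a self-adjoint $A \in \cP_c^m(\widetilde U,\widetilde V;\Omega^{1/2})$ with $A^2 - (1 + P^*P)$ regularizing. Since $A$ is itself longitudinally elliptic (its principal symbol being the positive square root of $1+|\tilde\sigma_m(P)|^2$), Theorem \ref{parametrix} yields a parametrix $B \in \cP_c^{-m}(\widetilde U,\widetilde V;\Omega^{1/2})$; by iterating through Theorem \ref{smoothing} and symmetrizing, $B$ may be taken self-adjoint with $AB - I$ and $BA - I$ regularizing. Then $C := B^2 \in \cP_c^{-2m}(\widetilde U,\widetilde V;\Omega^{1/2})$ is a self-adjoint parametrix for $1 + P^*P$: one has $(1 + P^*P)C = I + K$ and $C(1 + P^*P) = I + K'$ with $K, K'$ regularizing. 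By Theorem \ref{thm:extension}(a) (via Lemma \ref{lemmanegative}), $C$ and all the regularizing remainders belong to $C^*(\cU)$.

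To conclude, apply $\overline{1 + P^*P}$ to $Cb$ for $b$ in a dense $*$-subalgebra of $C^*(\cU)$: the image equals $(I + K')b$, so the range of $\overline{1 + P^*P}$ contains $(I + K')\cdot C^*(\cU)$. Combining positivity $1 + P^*P \ge I$, which forces injectivity and the lower bound $\|(1+\overline{P^*P})x\| \ge \|x\|$ on the natural domain, with the fact that by iterating Theorem \ref{smoothing} the remainder $K'$ may be taken of arbitrarily low order, one obtains that $I + K'$ is Fredholm of index zero in the multiplier algebra modulo $C^*(\cU)$, whence the range of $\overline{1+P^*P}$ is dense in $C^*(\cU)$. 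Baaj--Julg's criterion then yields regularity of $\overline P$. Equivalently, one may construct the bounded transform $\overline{P}\, \cdot\, (1 + \overline{P^*P})^{-1/2}$ directly as a bounded multiplier of $C^*(\cU)$, noting that $PB \in \Pseudo 0(\cU,\cV)$ is bounded by Theorem \ref{thm:extension}(b), and invoke Baaj--Julg's characterization of regular operators via their bounded transforms.

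The main obstacle is the passage from ``parametrix modulo regularizing operators'' to genuine surjectivity of $\overline{1 + P^*P}$ onto $C^*(\cU)$. The potential lack of longitudinal smoothness of $\cG_\cU$ prevents a direct reduction to fiberwise $L^2$-estimates as in \cite{Vassout}, so the Fredholm step must be carried out in the $C^*(\cU)$-framework, using compactness of $M$, positivity of $1 + P^*P$, and the fact that the regularizing elements form an ideal contained in $C^*(\cU)$. Once this analytic point is settled, the remainder of the argument is formal and transports from \cite{Vassout}.
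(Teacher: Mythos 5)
Your overall strategy (reduce regularity of $\overline P$ to the Baaj--Julg/Woronowicz dense-range criterion for $1+P^*P$, or equivalently to constructing the bounded transform) is a legitimate alternative route, but as written it has a genuine gap at exactly the point you yourself flag as ``the main obstacle'', and you do not close it. From $(1+P^*P)C=I+K$ with $K$ regularizing you only obtain that the range of $\overline{1+P^*P}$ contains $(I+K)\,\theta(\cA(\cU))$. Since $K\in C^*(\cU)$, the operator $I+K$ is invertible in the quotient $\cM(C^*(\cU))/C^*(\cU)$, but in the Hilbert-module setting this does not imply that $(I+K)C^*(\cU)$ is dense, and there is no usable notion of ``Fredholm of index zero'' here: for Hilbert $C^*$-modules an adjointable operator invertible modulo compacts need not have closed or complemented range, and injectivity together with any reasonable index-zero statement does not yield surjectivity, because kernels and cokernels need not be orthocomplemented submodules. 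The positivity $1+P^*P\ge 1$ does give $\|(1+\overline{P^*P})x\|\ge\|x\|$, hence injectivity and closedness of the range of the closure; but you still must show that this closed submodule is all of $C^*(\cU)$, and the Hilbert-space argument (a symmetric operator bounded below with trivial deficiency is self-adjoint and onto) is unavailable precisely because closed submodules need not be complemented --- self-adjointness without regularity is the pathology to be excluded, so the reasoning becomes circular. Taking the remainder ``of arbitrarily low order'' does not help either, since regularizing remainders are already of order $-\infty$. Closing this gap is the actual content of Vassout's Sobolev-module machinery, which you would have to reconstruct in the present setting.

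The paper sidesteps the issue entirely with a shorter device worth comparing with your plan: take a parametrix $Q$ of $P$ itself, set $R=I-QP$, and consider
$$T=\begin{pmatrix}\overline R&\overline{Q}\\ \overline{PR}&\overline{PQ}\end{pmatrix}$$
acting on the $C^*$-module $C^*(\cU)\oplus C^*(\cU)$. Every entry is of order $\le 0$ (indeed $R$ and $PR$ are regularizing, $Q$ has order $-m<0$, and $PQ$ has order $0$), so $T$ is bounded and adjointable by Theorem \ref{thm:extension}; the identity $R+QP=I$ gives $T^2=T$. The range of an adjointable idempotent is automatically orthocomplemented, and a direct computation identifies this range with the closure of the graph of $P$, so regularity follows from the definition --- with no need to prove density of the range of $1+P^*P$ and no Fredholm theory. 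If you want to keep your route through $1+P^*P$, you should either import the idempotent trick or genuinely develop the Sobolev modules; the step as you have written it is not a proof.
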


\begin{proof}
For every $S\in \Pseudo 0(\cU,\cV)$, denote by $\overline S$ its closure which is a multiplier of $C^*(\cU)$.

Let $Q$ be a parametrix of $P$ and write $I-QP=R$. Let $T\in \cL(C^*(\cU)\oplus C^*(\cU))$ be the (adjointable) operator of  the $C^*$-module $C^*(\cU)\oplus C^*(\cU)$ with matrix $\begin{pmatrix}
\overline R&\overline{Q}\\
\overline{PR}&\overline{PQ}
\end{pmatrix}.$ 

The restriction to $\theta(\cA)\oplus \theta(\cA)$ of $T^2$ has matrix $\begin{pmatrix}
(R+QP)R& (R+QP)Q\\
P(R+QP)R&P(R+QP)Q
\end{pmatrix}$, whence $T^2=T$ (as $R+QP=I$) .

\begin{itemize}
\item  Since $T$ is an (adjointable) idempotent element in $ \cL(C^*(\cU)\oplus C^*(\cU))$, its range is orthocomplemented.
\item  Since $T$ is continuous, and $\theta(\cA(\cU))\oplus \theta(\cA(\cU))$ is dense in $C^*(\cU)\oplus C^*(\cU)$, we deduce that the range of $T$ is the closure of $T(\theta(\cA(\cU))\oplus \theta(\cA(\cU)))$
\item If $(x,y)\in \theta(\cA(\cU))$, we find $T(x,y)=(Rx+Qy,P(Rx+Qy))$. If furthermore $y=Px$, $T(x,y)=(x,y)$. It follows that $T(\theta(\cA(\cU))\oplus \theta(\cA(\cU)))$ is the graph of $P$. \end{itemize}

We just proved that the closure of the graph of $P$ is orthocomplemented, \ie $\overline P$ is regular.
\end{proof}

\begin{remarks}
In the same way we may adapt the proofs of \cite{Vassout} to our setting to prove:
\begin{enumerate}
\item Any two longitudinally elliptic operators $P, P' \in \Pseudodif(\cU,\cV)$ of the same order have the same domain.
\item Longitudinally elliptic operators define a filtration of $C^{*}(\cU)$ by Sobolev modules. If $P$ is of order $k > 0$ then
\begin{itemize}
\item $H^{k}(P) = {\rm dom}\, P$ with scalar product $\langle \alpha,\beta \rangle_{k} = \langle P\alpha,P\beta \rangle + \langle \alpha,\beta \rangle$
\item $H^{-k}(P)$ is the completion of $C^{*}(\cU)$ with the norm $\|\xi\|_{-k} = \|(1+P^{*}P)^{-1/2}\xi\|$.
\end{itemize}
This filtration satisfies the following properties:
\begin{itemize}
\item If $k > k'$ then the identity on $\cA_{\cU}$ extends to a compact morphism of Hilbert modules $i_{k,k'} : H^{k} \hookrightarrow H^{k'}$.
\item Any $P \in \Pseudodif(\cU,\cV)$ of order $m$ defines an element of $\cL(H^{k};H^{k-m})$ for any $k$.
\end{itemize}
\item Using the Sobolev spaces above one can define an algebra $\Psi^{-\infty}(\cU)$ of smoothing pseudodifferential operators without compact support by calling an operator $R$ smoothing iff $R \in \bigcap_{s,t \in \R}\cL(H^{s};H^{t})$. It follows that $\cA_{\cU} \subset \Psi^{-\infty}(\cU)$ is a dense subalgebra.
\end{enumerate}
\end{remarks}

\subsection{Application: Laplacian of a singular foliation}

As a particular case we may construct a laplacian operator for every foliation and prove that it is a positive self-adjoint operator of $L^{2}(M)$.

Let $(M,\cF)$ be a foliation. Every vector field $X\in \cF$ defines a differential hence pseudodifferential operator $X\in \Pseudodif(\cU,\cV)$ (\cf example \ref{vectorfields}).

Since $M$ is compact, $\cF$ is generated by finitely many vector fields $X_{1},\ldots, X_{N}$. 

\begin{definition}
The element $\Delta=\sum_{k=1}^N X_k^*X_k$ is called a {\em Laplacian} of the foliation $\cF$.
\end{definition}

From the definition of $\Delta$ we have:

\begin{theorem} \label{laplace}
A Laplacian $\Delta$ is a formally self adjoint elliptic operator of order $2$; it therefore defines a regular (unbounded) self adjoint multiplier of $C^{*}(\cU)$.\hfill$\square$
\end{theorem}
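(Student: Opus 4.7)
The plan is to verify directly the three properties claimed in the statement---order $2$, formal self-adjointness, and longitudinal ellipticity---and then simply invoke the preceding theorem on regularity of longitudinally elliptic operators of positive order.

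\emph{Symbol of a generator.} First I would compute the longitudinal principal symbol of each $X_k\in\cF$. Represent $X_k$ on a \emph{minimal} identity bi-submersion $(U,t,s)$ about an arbitrary point $x\in M$, built by the exponential construction of \S 2 from a basis of $\cF_x$. As a longitudinal differential operator $X_k$ is of order one, and by Example~\ref{vectorfields} its classical principal symbol is $(y,\eta)\mapsto i\langle\eta,X_k\rangle$ on $N^*V$; moreover it depends only on the image of $X_k$ in $N_yV$. By minimality, $d_yt:N_yV\to\cF_y$ is an isomorphism (both have dimension $\dim\cF_y$), so after restricting to $\cF^*\subset N^*V$ as in \S 2.2,
\begin{equation*}
\tilde\sigma_1(X_k)(y,\xi)=i\langle\xi,[X_k]_y\rangle,\qquad \xi\in\cF_y^*,
\end{equation*}
where $[X_k]_y\in\cF_y$ is the class of $X_k$ modulo $I_y\cF$.

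\emph{Symbol of $\Delta$, self-adjointness, ellipticity.} By Definition~\ref{defi3.1}(a), $\tilde\sigma_1(X_k^*)$ is the complex conjugate of $\tilde\sigma_1(X_k)$; hence by the multiplicativity of longitudinal principal symbols (Theorem~\ref{multiplicativity}), each $X_k^*X_k\in\Psi^2(\cU,\cV)$ has symbol $|\langle\xi,[X_k]_y\rangle|^2$. Summing,
\begin{equation*}
\tilde\sigma_2(\Delta)(y,\xi)=\sum_{k=1}^{N}|\langle\xi,[X_k]_y\rangle|^2\ge 0.
\end{equation*}
Formal self-adjointness $\Delta^*=\Delta$ is immediate from the involution axioms $(AB)^*=B^*A^*$ and $(A^*)^*=A$ on $\Psi^\infty(\cU,\cV)$. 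For longitudinal ellipticity, since $X_1,\ldots,X_N$ generate $\cF$ as a $C_c^\infty(M)$-module, the vectors $[X_1]_y,\ldots,[X_N]_y$ span $\cF_y$ as a vector space; so if $\tilde\sigma_2(\Delta)(y,\xi)=0$ then $\xi$ pairs trivially with a generating set of $\cF_y$ and hence $\xi=0$ in $\cF_y^*$.

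\emph{Conclusion.} With the three properties in hand, the preceding theorem on longitudinally elliptic operators of positive order produces a regular multiplier $\overline\Delta$ of $C^*(\cU)$, and $(\overline\Delta)^*=\overline{\Delta^*}=\overline\Delta$, so it is self-adjoint (in fact positive, since formally $\langle\Delta f,f\rangle=\sum_k\|X_kf\|^2$). I expect the only mildly delicate point to be the intrinsic identification of $\tilde\sigma_1(X_k)$: on a non-minimal identity bi-submersion the normal bundle $N_yV$ is strictly larger than $\cF_y$, but Proposition~\ref{identbisect} provides a morphism to a minimal bi-submersion under which the computation above applies, and Proposition~\ref{push-forward} guarantees that the longitudinal symbol so defined is independent of this choice.
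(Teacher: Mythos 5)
Your argument is correct and is exactly the reasoning the paper intends: the authors leave Theorem \ref{laplace} with a ``$\square$'' precisely because it follows from the definition of $\Delta$ by the symbol computation of Example \ref{vectorfields}, multiplicativity of the longitudinal symbol (Theorem \ref{multiplicativity}), the spanning of $\cF_y$ by the classes $[X_k]_y$, and the regularity theorem for longitudinally elliptic operators of positive order. Your write-up merely makes these steps explicit, so it matches the paper's approach.
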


We may apply that to the natural representation of $C^*(\cU)$ on $L^2(M)$ (which can be seen as the integration of the trivial representation $(\lambda,\C)$ of the groupoid in the sense of \cite[section 5.1]{AndrSk} where $\lambda $ is the Lebesgue measure on $M$).

\begin{corollary}
The Laplacian $\Delta$ defines an unbounded, self-adjoint operator $\Delta$ of $L^{2}(M)$. In other words, take vector fields $X_1,\ldots,X_N$ on a compact manifold. Assume that the module they generate is a foliation, \ie for every $(i,j)$ there exist $f_{i,j,k}\in C^\infty(M)$ such that $[X_i,X_j]=\sum_kf_{i,j,k}X_k$. Then the closure in $L^2(M)$ of $\sum_{k=1}^NX_j^*X_j$ is self adjoint.
\end{corollary}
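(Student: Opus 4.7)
The strategy is to apply Theorem~\ref{laplace} and then transport regularity and self-adjointness from the multiplier algebra of $C^*(\cU)$ to the concrete Hilbert space $L^2(M)$. First I would observe that the natural representation $\Pi$ of $C^*(\cU)$ on $L^2(M)$ is a non-degenerate $*$-representation: it arises by integrating the trivial representation $(\lambda,\C)$ of the holonomy groupoid, where $\lambda$ is a smooth Lebesgue measure on $M$, as in \cite[\S5.1]{AndrSk}. The associated field of Hilbert spaces is constant equal to $\C$ and each unitary $\pi^U_u$ is trivial, so the formula of \S\ref{repsPseudo} specializes to the usual convolution action of functions supported on bi-submersions on $L^2(M)$.

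By Theorem~\ref{laplace}, $\Delta$ is a regular unbounded self-adjoint multiplier of $C^*(\cU)$. The formalism of \cite{Baaj, BaajJulg, Woronowicz} recalled in \S\ref{sect2.8} then yields a closed operator $\widehat\Pi(\Delta)$ on $L^2(M)$ whose graph is the closure of $\{(\Pi(a)\xi,\Pi(\Delta a)\xi):a\in \dom\Delta,\ \xi\in L^2(M)\}$, and $\widehat\Pi(\Delta)$ is self-adjoint because $\Delta$ is. This already produces a self-adjoint unbounded operator on $L^2(M)$ attached to $\Delta$.

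It remains to identify $\widehat\Pi(\Delta)$ with the closure of $D = \sum_{k=1}^N X_k^*X_k$ initially defined on $C^\infty(M)$. On smooth functions each $X_k$, viewed through example~\ref{vectorfields} as an order one pseudodifferential multiplier, acts via $\Pi$ as the Lie derivative $f\mapsto X_k f$; iterating shows $\widehat\Pi(\Delta)f = Df$ for $f\in C^\infty(M)$. To conclude I would establish that $C^\infty(M)$ is a core for $\widehat\Pi(\Delta)$: since convolution by a smooth compactly supported kernel on a bi-submersion maps $L^2(M)$ into $C^\infty(M)$, vectors of the form $\Pi(a)\xi$ with $a\in \cA(\cU)$ automatically lie in $C^\infty(M)$, and such pairs $(\Pi(a)\xi,\Pi(\Delta a)\xi)$ are dense in the graph of $\widehat\Pi(\Delta)$ by its very definition. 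Since a self-adjoint operator is maximal symmetric, this forces $\overline D=\widehat\Pi(\Delta)$, giving the essential self-adjointness of $D$ on $C^\infty(M)$ and self-adjointness of its closure.

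The principal obstacle is the last step: verifying that $C^\infty(M)$ is a core for $\widehat\Pi(\Delta)$. This relies on the smoothing property of $\Pi$ applied to elements of $\cA(\cU)$ together with an approximate-identity argument, and the extension machinery of \S\ref{repsPseudo} supplies the required regularity. Some care is needed, since $\Pi$ is typically not faithful on pseudodifferential multipliers, so the identity $\widehat\Pi(\Delta)f = Df$ for $f\in C^\infty(M)$ must be verified directly from the convolution formulas rather than abstractly from the algebra structure.
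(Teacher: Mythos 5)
Your proposal is correct and follows essentially the same route as the paper, which treats this as an immediate application of Theorem~\ref{laplace}: the natural representation of $C^*(\cU)$ on $L^2(M)$ is non-degenerate, and the general fact recalled at the start of Section~\ref{sect2.8} that a regular self-adjoint multiplier $T$ yields a self-adjoint operator $\widehat\Pi(T)$ does the rest. Your final step --- identifying $\widehat\Pi(\Delta)$ with the closure of $\sum X_k^*X_k$ on $C^\infty(M)$ via smoothness of the vectors $\Pi(a)\xi$ and maximality of self-adjoint operators among symmetric ones --- is a correct and welcome elaboration of a point the paper leaves implicit in the phrase ``in other words.''
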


\begin{remarks}
\begin{enumerate}
\item One can apply theorem \ref{laplace} to other natural representations of $C^*(\cU)$. One may for instance take the representation on $L^2$ of a leaf. There, the Laplacian is elliptic and the difficulty comes from the fact that the leaf may not be compact.
\item The spectrum of the image of every regular operator - and in particular of $\Delta$ - is the same if we take two weakly equivalent representations of  $C^*(\cU)$. This should apply  if we compare the representation in $L^2(M)$ and a representation in $L^2$ of a dense leaf.
\end{enumerate}
\end{remarks}

\printindex

\end{document}